\newtheorem{theorem}{Theorem}[section]
\newtheorem{lemma}{Lemma}[section]
\newtheorem{proposition}{Proposition}[section]
\newtheorem{remark}{Remark}[section]
\newcommand{\R}{I\! \! R}
\newcommand{\dem}{{\it Proof:} }
\begin{document}
\title{Repulsive chemotaxis and predator evasion in  predator prey models with diffusion and prey taxis.} 

\author{Purnedu  Mishra$^a$, Dariusz Wrzosek$^a$\\
$^a$ Institute of Applied Mathematics and Mechanics,\\
 University of Warsaw, Warszawa, Poland}
\date{}

\maketitle

\begin{abstract}
The role of predator evasion mediated by chemical signaling is studied in a diffusive prey-predator model when prey-taxis is taken into account (model A) or not (model B) with taxis strength coefficients $\chi$ and $\xi$ respectively. In the  kinetic part of the models it is assumed that the rate of prey consumption include functional responses of Holling, Bedington-DeAngelis or Crowley–Martin. Existence of  global-in-time classical solutions to model A is proved  in  space dimension $n=1$  while to model B for any $n\geq 1$. The Crowley-Martin response combined with bounded rate of signal production  preclude  blow-up of solution in model A for $n\leq 3$. Local and global stability of a constant coexistence  steady state which is stable for ODE and purely diffusive model are studied along with mechanism of Hopf bifurcation for Model B when $\chi$ exceeds some critical value. In model A it is shown that  prey taxis may destabilize the coexistence steady state provided $\chi$ and $\xi$ are big enough. Numerical simulation depict emergence of complex space-time patterns for both models and indicate existence of solutions to model A which blow-up  in finite time for $n=2$. 
\end{abstract}
\textbf{Keywords:} Predator-prey model; Chemo-repulsion; Direct taxis; Taxis-driven instability; Pattern formation.

\section{Introduction}
We study the effect of predator evasion mediated by chemical signaling described as chemorepulsion in an extended classical diffusive prey-predator model. It is well known that many chemicals (e.g. pheromones, kairomones)  released  by plants and animals are used as means of inter and intraspecific communication. Olfaction is a primary means by which prey animals detect predators \cite{Nolte} and trigger anti-predator responses. In the present paper we consider the case when the chemical signal  is diffusive and plays the role of alarm signal stimulating the  antipredator  response. Many types of anti-predator responses to chemical cues are  described  in the literature \cite{Banks,Ferrari,Hay}. It is enough to mention  induced morphological defense and behavioral responses. Among many behavioral prey strategies \cite{Rojas}  to the threat of  predation, each of them worth of modeling attempts, we concentrate  in this paper on escape (evasion)  in response to the gradient of chemical signal indicating the spot of high predator concentration (the long list of possible antipredator responses of prey  including the escape caused by chemical signal is provided in \cite{Connover,HB,Kats,ZB}). One of our goals  is to verify if   classical diffusive predator prey models enriched by terms accounting for  chemical signaling can describe the tendency to   spatiotemporal separation between prey and predators, by either avoiding areas inhabited by potential predators or using those areas at different times than the predators. Denoting the densities  of the prey, predator and the chemical by $N, P, W:\Omega\mapsto \R$, respectively, the model reads 
\begin{align} 
\label{G1}\left\{
\begin{aligned}
N_t&= D_1 \Delta N +\nabla \cdot (\chi N \nabla W) +   f(N) - F(N,P,W)P\,,\\
P_t&= D_2 \Delta P -\nabla \cdot (\xi P \nabla N)-\delta P+ bF(N,P,W)P\,,\\
W_t&=D_3 \Delta W-\mu W+g(N,P,W)\,,
\end{aligned}
\right.
\end{align}
defined in a  bounded domain $\Omega\subset\R^n$  with smooth boundary and outer normal $\nu$, supplemented  with  initial conditions 
\begin{equation}
 \label{IC} N(\cdot ,0)=N_0,\ P(\cdot ,0)=P_0, W(\cdot ,0)=W_0
\end{equation}
and  homogeneous Neumann boundary conditions 
\begin{equation}
\label{BC}
\langle\nabla N\,,\nu\rangle=\langle\nabla P\,,\nu\rangle=\langle\nabla W\,,\nu\rangle=0, \quad\mbox{on}\quad  \partial\Omega, \ t>0\,. 
\end{equation}
The function $f=f(N)$ describes the prey population growth while $F=F(N\,,P\,,W)$ is the functional response which describes the rate of prey consumption per unite predator density while $g=g(N,P,W)$  describes the rate of chemical signal production. The diffusion constants are denoted by   $D_i>0, i=1\,,\,2\,,3$, $\delta$ is predator's  death rate coefficient, $\mu$ is a chemical degradation rate  and  $b$    is a coefficient related to the conversion efficiency  of food into offspring. We consider for the sake of generality a hypothetical  situation when the functional response may be affected by  the chemical. 
The avoidance of predator by prey is upon  detection of chemical released by predator (e.g. predator odor)  which stimulates migration outward  the gradient of the  chemical concentration (chemorepulsion). The corresponding sensitivity coefficient is denoted by $\chi>0$.
System (\ref{G1}) is general enough to grasp many models known from  the literature with different   prey consumption rates per predator  i.e.   functional responses \cite{Hol} as well as different mechanisms of chemical production. 

We assume the following assumptions on functions $f$,  $F$ and $g$ which comprise many models used in the biomathematical literature.
Denoting $ [0,+\infty):=\R_+$ we assume the following restrictions on functions $f, F$ and $g$ 
\begin{enumerate}
\item[(H1)] The function $f: \R_+ \mapsto \R$ is a $C^2$-function such that there exist constants $r_1\,,r_2>0 $ and $K$ such that $f(0)=0,f(K)=0$ and 
\[f(N)\leq r_1 N -r_2 N^2 \quad \mbox{ for any} \;N\geq 0\,.
\]
\item[(H2)] The function $F :\R_+^3 \mapsto \R_+$ is a $C^2$-function such that for some  constants $C_F>0$ 
\[F(N,P,W)\leq C_F  \quad \mbox{for any} \; N, P, W \geq  0 \,.
\]
\item[(H3)] The function $g : \R_+^3 \mapsto \R_+$ is a $C^2$-function such that for some  constants $C_g>0$ 
\[g(N,P,W)\leq C_g P \quad \mbox{for any} \;\; N, P, W \geq 0 \,.
\]
\end{enumerate} 
The typical rate of population growth  which satisfies (H1) is of course the logistic function
\[f(N)=rN\left(1-\frac{N}{K}\right)
\] 
where $r$  and $K$ are the growth rate coefficient  and the  carrying capacity, respectively. Among models of prey consumption rate which satisfy  (H2) we  may point  the  Holling type II ($d=1$) and Holling type III ($d>1$) functional responses \cite{Hol} 
\begin{equation}
\label{FH}
F=F_H(N)=\frac{aN^d}{1+bN^d}\quad a\,,b>0\,,
\end {equation}
as well as  Beddington-DeAngelis functional response (c.f. \cite{BD})   
\begin{equation}
\label{FB}
F=F_H(N)=\frac{aN}{1+bN+cP},\quad a\,,b\,,c>0\,,
\end {equation}
or that of Crowley–Martin (c.f. \cite{CM}) 
\begin{equation} \label{CM}
F(N,P)=\frac{aN}{1+bN+cP+dNP}\,,\quad a\,,b\,,c\,, d>0\,.
\end{equation}
The last two  functions incorporate  mutual interference of  predators (see e.g. \cite{Skalski} for a survey and comparison with experimental data). 
As far as the chemical signal production rate is concerned  we  consider  two cases which fall into (H3), the simplest one when the rate of chemical production (odor of prey)  is  proportional to the predator density  
\begin{equation}
\label{odor} 
g(N, P,W)= \gamma P\,,\quad \gamma>0\,,
\end{equation}
and the case when the release of signal is due to damage of prey captured (chemicals from injured conspecific c.f. \cite{Kats} ) 
\begin{equation}
\label{demage}
g(N, P,W)= g_2(N,P)= \gamma_1F(N,P,W) P\,,\quad \gamma_1>0\,.
\end{equation}

We shall distinguish the following  two  different predator-prey models describing  evasion of predator by prey due to the chemical alarm signaling:

\begin{itemize}
\item
In  model A  ($\chi>0\,,\xi> 0$) it is assumed that (H1)-(H3) are satisfied and predator's searching strategy combines random spread (diffusion) and prey taxis ($\xi>0$) which  amounts to migration toward gradient  of prey density:
\begin{align}\label{modelA}
\text{MODEL A}\left\{
\begin{aligned}
N_t&=D_1 \Delta N+\nabla \cdot (\chi N \nabla W)+ f(N)- PF(N,P,W), \\
P_t&=D_2 \Delta P -\nabla \cdot (\xi P \nabla N) -\delta P+bPF(N,P,W), \\
W_t&=D_3 \Delta W -\mu W+ g(N, P,W)\,,
\end{aligned}
\right.
\end{align}
with initial and Neumann boundary conditions defined in (\ref{IC})-(\ref{BC}).
\item
In model B  ($\xi=0$) it is assumed that (H1)-(H3) are satisfied and predator's searching strategy  is merely restricted  to the random search described by the diffusion operator: 
\begin{align}\label{modelB}
\text{MODEL B}\left\{
\begin{aligned}
N_t&=D_1 \Delta N+\nabla \cdot (\chi N \nabla W)+ f(N)- PF(N,P,W), \\
P_t&=D_2 \Delta P  -\delta P+bPF(N,P,W), \\
W_t&=D_3 \Delta W -\mu W+ g(N,P,W)\,,
\end{aligned}
\right.
\end{align}
with initial and Neumann boundary conditions defined in (\ref{IC})-(\ref{BC}).
\end{itemize}
\medskip 

\noindent
{\bf Description of our  results and the related literature.} 

Many mathematical models describing complex interactions between components of biological systems have the structure of systems of nonlinear partial differential equations of parabolic type which describe changes in time and space of densities of biological system components. Such models, capable to describe complex space-time patterns reflect local or nonlocal in space interactions between systems components including diffusive transport and taxis. We refer the reader to most recent survey papers   which contain mathematical methods and modeling perspectives for chemotaxis systems \cite{Bellomo1,Bellomo2} as well as to the survey of various patterning mechanism  in this kind of models \cite{Painter}. Our investigations contribute to the series of recent papers on population interactions taking into account indirect mechanisms of taxis associated with chemical signaling \cite{Ahnand,Amorim,BaiWinkler,TaoWinkler,Tyutyunov,TWrz,WangWang}.
We proceed to describe the content of the paper.

After the introduction in Section \ref{SectionGlobalexistence} in Theorem \ref{exist}  the existence of global uniformly bounded classical solutions to Model B is proved for space dimension $n\geq 1$ and to Model A only for $n=1$. The latter turns out to be much more difficult to study because the only available estimate is just $L^1$ -estimate and the comparison method cannot be immediately applied neither to prey nor predator equation. It is worth noticing that numerical simulations  of model A (see Fig. \ref{blowup}) in space dimension $n=2$ indicate formation of blow-up  solution in finite time an effect related to the Keller-Segel model of chemotaxis (see \cite{K-S} and \cite{Bellomo1}). This observation is particularly interesting because for space dimension $n=2$ the formation of blow-up solutions is precluded for both the predator-prey  model with prey taxis ($\chi=0$) as proved in \cite{JinWang} and for predator prey model without prey taxis term (i.e Model B where $\xi=0$) proved in Theorem \ref{exist}.  This effect may appear only in model A when there is a cumulative effect of both taxis forces and initial densities of species are suitably chosen. Neither of them can alone  lead to such an effect.
 Model A may be viewed as predator prey model with pursuit (prey taxis)  and indirect repulsive predator taxis (evasion). Therefore it  is closely related to predator-prey model with  pursuit and evasion with  chemical sensing  studied in \cite{Tyutyunov} and recently  in \cite{WaWuShi} where only local in time existence of solutions was proved, so far,  provided some smallness condition on the taxis coefficients was satisfied. The  existence of global solutions was  shown  in \cite{Amorim} at least for space dimension $n\leq 2$ in a parabolic-elliptic case when the   distribution of chemical is governed  by elliptic equation \cite{Amorim} which amounts to assume that the diffusion of the chemical  happens in a much faster time scale than the movement of individuals.   On the other hand it was recently proved in \cite{Telch} that  global boundedness of solutions also holds for the   predator-prey system with pursuit-evasion and chemical signaling   under additional assumptions on highly nonlinear diffusion of species which turns out to preclude blow-up formation in finite time. 
It is worth adding that in Theorem \ref{exist2} we prove the existence of global solutions to model A for space dimension $n\leq 3$  assuming sufficiently strong  inhibitory effect in prey consumption for high predator densities and signal production linked directly with prey consumption. This effect may be attained by taking the functional response of Crowley-Martin type (\ref{CM}).  

Section \ref{stabB} and Section \ref{stabA}, related to model B and  model A respectively, concern the stability of the coexistence steady state $\bar{E}$ which stems from the ODE part of the system. The classical Rosenzweig-MacArthur prey-predator model \cite{RM}  may  serve as a  benchmark. It is  an ODE describing the densities of predator  and prey  accounting for  a concave functional response of Holling  type II c.f. (\ref{FH}) and logistic growth for  prey. The extended Rosenzweig-MacArthur model which  accounts for the chemical signaling  reads as follows 
\begin{align}\label{RM1}
\left\{
\begin{aligned}
N_t&=D_1 \Delta N+\nabla \cdot (\chi N \nabla W)+ rN \Big(1-\frac{N}{K}\Big)- \frac{aNP}{(1+aT_h N)}, \\
P_t&=D_2 \Delta P -\nabla \cdot (\xi \nabla N) -\delta P+\frac{abNP}{(1+aT_h N)}, \\
W_t&=D_3 \Delta W -\mu W+ \gamma P\,,
\end{aligned}
\right.
\end{align}
with the initial and boundary conditions (\ref{IC})-(\ref{BC}) where  $a$ is the prey  encounter rate,  $T_h$ is the handling time and $\delta$ counts death rate of predators. Positive parameter $\gamma$ is the production rate of the  chemoattractant and $\mu$ measures  its vanishing rate. It is known that for some range of parameters there exists the unique  coexistence  steady state $E_{RM}=(\bar{N}\,,\bar{P})$ for the Rosenzweig-MacArthur  model ($D_i=W\equiv0$ in ( \ref{RM1}))  which is a global attractor provided it is linearly stable. If the carring capacity $K$ is chosen as a bifurcation parameter then for some $K=K_b$ the Hopf bifurcation arises and then the steady state loses stability and  a stable limit cycle emerges for $K>K_b$ (see e.g. \cite{Wrz}) in the ODE case or \cite{Yi} in the case of reaction-diffusion system  (see also the literature given there). The coexistence steady state for (\ref{RM1}) is of form
\[\bar{E}=(\bar{N}\,,\bar{P}\,, \bar{W})\,, \quad\mbox{where}\quad \bar{W}=\frac{\gamma}{\mu}\bar{P}\,.
\]
Typically the coexistence steady state preserves stability for some range of parameters also for models accounting for other functional responses then Holling II and our goal is to find conditions under which taxis effects may destabilize the homogeneous coexistence steady state. The linear stability analysis (Theorem \ref{ThmstabB})  of the steady state  indicates that the parameter $\chi$ plays  a crucial role as its sufficiently high values  can destabilize the steady state and trigger  the  Hopf bifurcations which give rise to spatio-temporal patterns proved to hold for Model B (see Theorem \ref{hopfthm}). The emergence of periodic and quasi-periodic space-time patterns is depicted in Section \ref{NumSim} which is mostly devoted to  numerical simulations related to extensions of the Rosenzweig-MacArthur in the frame of model A and model B.
It is worth  noticing  that this scenario is in contrast with direct repulsive predator-taxis studied recently in \cite{WWS} where it was proved that such a repulsive predator taxis does not affect linear stability of the coexistence steady state for  the Rosenzweig-MacArthur prey-predator model and in particular formation of patterns is then excluded. This is yet another example showing that direct and indirect taxis associated with chemical signaling lead to essentially different properties of models having the same predator-prey kinetics. This difference is evident   for the case of direct \cite{Ahnand,Bendamane,JinWang,Lee,Tao} and indirect prey-taxis models \cite{Ahnand,MiWrz,TWrz}.  

A natural question which arises in the analysis is whether local stability of the steady state may be extended to  global one. It turns out that for   Model B with Holling II  or Beddington-deAngelis functional responses under additional assumption of logistic competition among predators a suitable Lyapunov  functional can be found which enables to prove in Theorem \ref{GlobStab} that  there is a threshold value of $\chi=\chi_0$ such that for $\chi<\chi_0$ the coexistence steady state $\bar{E}$ is indeed globally stable. Just before submitting the paper the authors have  learned about the paper \cite{Ahnand2} in which global existence of solutions and its long time behavior are studied to a system similar to Model B in which Lotka-Volterra kinetics  was assumed ($T_h=0$ in  (\ref{RM1})) along with   intraspecific  logistic  competition among predators.

Model A  may be also viewed  as a perturbation of predator- prey model with prey taxis for which it is well known that for the class of functional responses studied in this paper prey taxis enhances stability for any $\xi>0$ (see e.g. \cite{Lee}). The same is true (see Theorem \ref{ThmstabA}) in the case of our extended model  which accounts for chemorepulsive escape of prey provided  the repulsive force is not too strong i.e. $\chi$ is less then some threshold value. Otherwise  the steady state $\bar{E}$ may loose  or gain the stability depending  on the  relation between model parameters.

In Section \ref{NumSim} various complex  space-time patterns are shown which emerge in time starting from  initial conditions in the vicinity of the staedy state $\bar{E}$. In particular we show periodic and quasi periodic patterns as well as formation of singular solutions to model A.


\section{Existence of global-in-time solutions}\label{SectionGlobalexistence}
In this section we concentrate on showing  the existence of global in time solutions to model A and model B.

\begin{theorem} \label{exist}
Suppose that hypotheses (H1) -(H3) are satisfied and $N_0\,,P_0\,, W_0\in W^{1,r}(\Omega)$, $r>n$ are non-negative functions. 

\noindent
For model A ($\chi>0, \xi>0 $) in the case of $n=1$ and for Model B ($\chi>0, \xi=0 $) for all  $n\geq 1$    there exists the  unique uniformly $L^\infty$- bounded global classical solution $(N,P,W)$  to  system (\ref{G1}) defined in $\bar{\Omega}\times [0\,,\infty)$  satisfying initial and  boundary  conditions (\ref{IC})-(\ref{BC})   such that 
\[(N,P,W)\in (C([0\,,T):W^{1,r}(\Omega))\cap 
C^{2,1}(\bar{\Omega}\times (0\,,T)))^3\quad\mbox{for any}\;\; T>0 \,. 
\]
\end{theorem}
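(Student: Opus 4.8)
The plan is the standard two-stage scheme for quasilinear parabolic systems of taxis type: a local well-posedness step furnishing a solution on a maximal interval $[0,T_{\max})$ with a blow-up criterion, followed by a priori estimates that exclude $T_{\max}<\infty$ and bound the solution uniformly in time. First I would write both systems in the abstract divergence form $u_t=\nabla\cdot(\mathcal A(u)\nabla u)+\mathcal R(u)$ with $u=(N,P,W)$ and invoke Amann's theory for quasilinear parabolic systems. The diffusion matrix $\mathcal A(u)$ carries the off-diagonal taxis entry $\chi N$ (and, in Model A, also $-\xi P$), but its eigenvalues are the positive constants $D_1,D_2,D_3$, so the system is normally elliptic; together with the smoothness of $\mathcal R$ guaranteed by (H1)--(H3) this yields a unique maximal classical solution on $[0,T_{\max})$ and the extensibility criterion that $T_{\max}<\infty$ forces $\|N(\cdot,t)\|_{L^\infty}+\|P(\cdot,t)\|_{L^\infty}\to\infty$ as $t\to T_{\max}$ (the component $W$ is slaved to $P$ through (H3) and parabolic regularity, so it need not enter the criterion). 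Nonnegativity of $N,P,W$ follows from the maximum principle applied to each equation, the kinetic terms vanishing on the relevant coordinate hyperplanes for the responses (\ref{FH})--(\ref{CM}). The task thus reduces to an a priori $L^\infty$-bound for $N$ and $P$.

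Next I would extract the basic $L^1$ estimates. Integrating the $N$-equation and using (H1) with Young's inequality gives $\frac{d}{dt}\int_\Omega N+\tfrac{r_2}{2|\Omega|}\big(\int_\Omega N\big)^2\le C$, a Riccati inequality producing a uniform-in-time bound on $\int_\Omega N$ and a space-time $L^2$-bound on $N$. The crucial cancellation is that the combination $bN+P$ kills the $\pm bPF$ terms: integrating yields $\frac{d}{dt}\int_\Omega(bN+P)+\delta\int_\Omega(bN+P)\le C$ (by (H1)--(H2), the taxis terms dropping by the Neumann condition), hence a uniform bound on $\int_\Omega(bN+P)$, and then (H3) gives a uniform bound on $\int_\Omega W$. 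As the introduction stresses, this $L^1$-bound is the only estimate directly available for the fully coupled Model A.

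For Model B ($\xi=0$) the bootstrap from $L^1$ to $L^\infty$ closes in every dimension. The only taxis term is the \emph{repulsive} drift $\chi\nabla\cdot(N\nabla W)$; testing the $N$-equation with $N^{p-1}$ and integrating by parts turns it into a term that is absorbed by the diffusion through Young's inequality at the cost of $\int_\Omega N^{p}|\nabla W|^2$, which is controlled by coupling to the $W$-equation via the smoothing estimate $\|\nabla W(t)\|_{L^q}\lesssim 1+\sup_{s\le t}\|P(s)\|_{L^{p'}}$ (legitimate because $g\le C_gP$ by (H3)), while the logistic absorption $-r_2\int_\Omega N^{p+1}$ supplies the damping. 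Since the $P$-equation carries no taxis and $F\le C_F$, its $L^p$-estimates close by testing with $P^{p-1}$, the uniform control of $\int_\Omega(bN+P)$ preventing accumulation. Iterating these $L^p$-bounds in a Moser--Alikakos scheme, with Gagliardo--Nirenberg interpolation, gives uniform $L^\infty$-bounds for $N$ and $P$, hence for $W$. The repulsive sign of the chemotaxis and the logistic damping are exactly what make this argument dimension-independent.

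The main obstacle is Model A, and it is the reason for the restriction to $n=1$. Here the predator equation carries the prey-taxis term $-\xi\nabla\cdot(P\nabla N)$, so $P$ couples to $\nabla N$ while $N$ couples to $\nabla W$: a genuine double-taxis cross-diffusion system for which no comparison principle is available and the bootstrap above breaks down, since the two gradient terms feed into one another with no a priori control beyond $L^1$. In dimension $n=1$ the estimate can nonetheless be closed by coupled energy estimates exploiting the one-dimensional embedding $W^{1,2}(\Omega)\hookrightarrow L^\infty(\Omega)$ and the 1D Gagliardo--Nirenberg inequalities: one controls $\int_\Omega N_x^2$, $\int_\Omega P^2$ and a suitable norm of $W$ simultaneously, absorbing the cross-terms arising from $\chi(NW_x)_x$ and $\xi(PN_x)_x$ by Young's inequality and interpolation, the logistic term again providing damping. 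This yields an $H^1$- and hence $L^\infty$-bound, closing the extensibility criterion. For $n\ge2$ the interpolation no longer balances the taxis terms against diffusion --- consistent with the finite-time blow-up observed numerically in Fig.\ \ref{blowup} --- which is precisely why global existence for Model A is asserted only for $n=1$.
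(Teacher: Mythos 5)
Your overall architecture (Amann local existence with an $L^\infty$ extensibility criterion, nonnegativity by the maximum principle, an $L^1$ bound from a weighted combination of the equations, then a bootstrap to $L^\infty$) matches the paper, and your treatment of Model B is essentially the paper's: the $P$-equation closes on its own from the $L^1$ bound by Moser--Alikakos iteration, then $W$ gains a gradient bound from $P$, and the $N$-equation is handled last using the logistic absorption.

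The gap is in Model A for $n=1$, which is the only genuinely hard part of the theorem. You propose a simultaneous coupled energy estimate on $\int_\Omega N_x^2$, $\int_\Omega P^2$ and ``a suitable norm of $W$'', asserting that the cross terms coming from $\chi(NW_x)_x$ and $\xi(PN_x)_x$ can be absorbed by Young's inequality and interpolation. As written this does not obviously close: testing the $N$-equation with $-N_{xx}$ produces $\chi\int_\Omega NW_{xx}N_{xx}$, which costs $\Vert N\Vert_\infty\Vert W_{xx}\Vert_2\Vert N_{xx}\Vert_2\lesssim (1+\Vert N_x\Vert_2)\Vert W_{xx}\Vert_2\Vert N_{xx}\Vert_2$; the quantity $\Vert W_{xx}\Vert_2$ is controlled only through $\Vert P\Vert_2$, and $\int_\Omega P^2$ in turn requires control of $\Vert N_x\Vert_4$ through the term $\xi\int_\Omega PN_xP_x$. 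The resulting differential inequality for the total energy is superlinear, and Gronwall then gives only local-in-time control. The paper breaks this circle by a strictly sequential bootstrap whose first link uses only Lemma \ref{L1bound}: in one space dimension the semigroup smoothing of Lemma \ref{lemma1} applied to the $W$-equation (with $Q\equiv 0$, $q_0=1$) yields $\sup_t\Vert W(t)\Vert_{1,p}<\infty$ for every $p<\infty$ from the $L^1$ bound on $P$ alone. With $\nabla W$ thus controlled, testing the $N$-equation with $N^{k-1}$ and using the logistic damping gives $N\in L^k$ for all $k$; Lemma \ref{lemma1} then upgrades this to a bound on $\Vert N\Vert_{1,p}$; testing the $P$-equation with $P$ together with the Gagliardo--Nirenberg inequality gives $P\in L^2$; and a final application of Lemma \ref{lemma1} gives $P\in L^\infty$. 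Identifying that the $W$-estimate needs only $\Vert P\Vert_1$ is the key observation your sketch is missing; without it the coupled estimate you describe is not a proof.
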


\noindent
Before proving the theorem we state two lemmata. Consider first an auxiliary initial boundary value problem 
\begin{equation}\label{aux}
u_t+ A u +\eta u =\nabla \cdot Q +\varphi \,,\quad u(0)=u_0\in W^{1,r}(\Omega)\,, \; r>n
\end{equation}
where $\Omega$  is a regular domain, $W^{k,r}(\Omega)\,,\;k\in\{0,1,2\}$, $r\geq 1$,  is the  Sobolev space with the norm denoted by 
$\Vert\cdot\Vert_{k,r}$. For short the norm in the space $L^q(\Omega)$, $\Omega\subset \R^n$,   will be  denoted by $\Vert\cdot\Vert_q$. Notice that by the Sobolev embedding theorem 
\begin{equation}\label{Sobembed}
 W^{1,r}(\Omega)\subset L^\infty(\Omega)\quad\mbox{for}\;\; r>n\,.
\end{equation}
The operator 
\[ Au=-\Delta u\;\; \mbox{for}\;\; u\in D(A)=\{v\in W^{2,q}(\Omega) : \frac{\partial v}{\partial \nu}=0 \;\;\mbox{on}\;\; \partial \Omega\}
\] 
is a $L^q(\Omega)$-realization, $q\in(1\,,\infty)$,   of the Laplace operator with homogeneous Neumann boundary condition and 
\begin{align}\label{Q}
Q&\in X_q := C([0\,,T):(L^q(\Omega))^n)\,, \\ \label{phi}
\varphi &\in C([0\,,T):L^{q_0}(\Omega))\,.
\end{align}
The  Duhamel formula for (\ref{aux}) reads
\begin{equation} \label{Duh}
u(t)= e^{-(A+\eta)(t-\tau)}u(\tau)+ \int_\tau^te^{-(t-s)(A+\eta)}\nabla\cdot Q(s) ds + \int_\tau^t e^{-(t-s)(A+\eta)}\varphi(s) ds
\end{equation}
where $\tau\geq 0$ and $\eta>0$. We shall use the Gagliardo-Nirenberg interpolation inequality (see e.g. \cite{Henry}) which is quoted below for the reader's convenience.
\begin{proposition}\label{G-N} 
There exists a constant $C_{G-N}$ such that  for all $u\in W^{1,q}(\Omega)$  
\[\Vert u \Vert_p\leq C_{G-N} \Vert u \Vert_{1,q}^\alpha \Vert u\Vert_m^{1-\alpha}
\]
where $p\geq q\geq 1$, $p\geq m$  satisfy 
\begin{equation}\label{pq}
1\geq\alpha\geq\frac{\frac{n}{m}-\frac{n}{p}}{\frac{n}{m}+ 1 -\frac{n}{q}}\in(0,1)
\end{equation}
with sharp inequality when $m=1$ or $q=1$.
\end{proposition}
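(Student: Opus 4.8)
The plan is to reduce the assertion to the whole-space case $\Omega=\R^n$ and then transfer it back to the bounded domain by extension. Since $\partial\Omega$ is smooth, there is a bounded linear extension operator $E:W^{1,q}(\Omega)\to W^{1,q}(\R^n)$ (for instance the Stein extension) which also acts boundedly as $L^s(\Omega)\to L^s(\R^n)$ for the exponents $s\in\{m,p\}$ that occur. Applying the whole-space inequality to $Eu$ and restricting to $\Omega$ then yields the claim on $\Omega$, so it suffices to prove the estimate on $\R^n$; by density of $C_c^\infty(\R^n)$ in $W^{1,q}(\R^n)$ it is enough to treat smooth compactly supported $u$ and pass to the limit.

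On $\R^n$ the exponent relation is dictated by scaling: replacing $u$ by $u(\lambda\,\cdot\,)$ multiplies $\|u\|_p$, $\|\nabla u\|_q$ and $\|u\|_m$ by $\lambda^{-n/p}$, $\lambda^{1-n/q}$ and $\lambda^{-n/m}$ respectively, so a scale-invariant bound with a universal constant can hold only when the exponents balance, i.e. only for the critical value $\alpha_0=(\tfrac{n}{m}-\tfrac{n}{p})/(\tfrac{n}{m}+1-\tfrac{n}{q})$, which is exactly the lower bound in (\ref{pq}). I would establish the case $\alpha=\alpha_0$ in two steps. First, the endpoint Sobolev--Gagliardo--Nirenberg inequality $\|u\|_{q^*}\le C\|\nabla u\|_q$ with $q^*=nq/(n-q)$ for $1\le q<n$: for $q=1$ one bounds $|u(x)|$ by the line integral of $|\partial_i u|$ in each coordinate direction and multiplies the $n$ resulting factors, integrating successively by the generalized (Loomis--Whitney) Hölder inequality; the range $1<q<n$ follows by applying the $q=1$ bound to $|u|^\gamma$ for a suitable power $\gamma>1$ and splitting off $\|\nabla u\|_q$ by Hölder. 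Second, plain $L^p$-interpolation between $L^{q^*}$ and $L^m$: writing $\tfrac1p=\tfrac{\theta}{q^*}+\tfrac{1-\theta}{m}$ gives $\|u\|_p\le\|u\|_{q^*}^{\theta}\|u\|_m^{1-\theta}$, and a direct check shows $\theta=\alpha_0$, whence $\|u\|_p\le C\|\nabla u\|_q^{\alpha_0}\|u\|_m^{1-\alpha_0}\le C\|u\|_{1,q}^{\alpha_0}\|u\|_m^{1-\alpha_0}$.

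It remains to cover the full range $\alpha_0\le\alpha\le1$. Here the boundedness of $\Omega$ enters: since $m\le q^*$ the Sobolev embedding gives $\|u\|_m\le C\|u\|_{1,q}$ on $\Omega$, and writing $\|u\|_m^{1-\alpha_0}=\|u\|_m^{1-\alpha}\,\|u\|_m^{\alpha-\alpha_0}$ and estimating the last factor by $(C\|u\|_{1,q})^{\alpha-\alpha_0}$ upgrades the critical-$\alpha_0$ bound to any $\alpha\in[\alpha_0,1]$. The degenerate cases are handled separately before this interpolation: for $q=n$ one uses the limiting embedding $W^{1,n}\hookrightarrow L^s$ for all finite $s$, and for $q>n$ one uses Morrey's embedding $W^{1,q}\hookrightarrow C^{0,1-n/q}(\overline\Omega)$, after which the same $L^p$-interpolation closes the argument.

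I expect the genuine analytic core, and the main obstacle, to be the endpoint inequality $\|u\|_{q^*}\le C\|\nabla u\|_q$: the scaling heuristic only identifies the correct exponent $\alpha_0$ and produces no bound by itself, whereas the transfer by extension and the extension of the $\alpha$-range on a bounded domain are comparatively routine once the Loomis--Whitney integration is in hand. Finally, the remark that the lower bound for $\alpha$ is attained (the estimate holding only at $\alpha=\alpha_0$) when $m=1$ or $q=1$ corresponds precisely to the endpoints at which the generalized Hölder and interpolation steps above degenerate; I would confirm it by tracking the equality cases in those Hölder inequalities.
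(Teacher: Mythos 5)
The paper itself contains no proof of this proposition: it is quoted verbatim from Henry \cite{Henry} ``for the reader's convenience,'' so your attempt has to be measured against the classical Nirenberg argument rather than anything in the text. Your treatment of the subcritical case $1\le q<n$ is exactly that classical argument and is correct: Stein extension (bounded simultaneously on $W^{1,q}$ and on $L^s$), scaling to identify the critical exponent $\alpha_0=\bigl(\tfrac{n}{m}-\tfrac{n}{p}\bigr)/\bigl(\tfrac{n}{m}+1-\tfrac{n}{q}\bigr)$, the Loomis--Whitney proof of $\Vert u\Vert_{q^*}\le C\Vert\nabla u\Vert_q$, Lebesgue interpolation (legitimate since the hypothesis $\alpha_0\in(0,1)$ forces $m\le p<q^*$), and the padding $\Vert u\Vert_m\le C\Vert u\Vert_{1,q}$ to pass from $\alpha_0$ to every $\alpha\in[\alpha_0,1]$.

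The gap sits in the one sentence disposing of $q\ge n$. For $q>n$, Morrey plus $L^\infty$--$L^m$ interpolation produces the exponent $1-\frac{m}{p}=\frac{n/m-n/p}{n/m}$ on $\Vert u\Vert_{1,q}$, which is \emph{strictly larger} than $\alpha_0=\frac{n/m-n/p}{n/m+1-n/q}$ precisely because $1-n/q>0$; the range $\alpha\in[\alpha_0,\,1-\frac{m}{p})$ is not covered, and this is not a fringe regime --- the paper's own application in (\ref{npn1}) of Theorem \ref{exist} has $n=1$, $q=2>n$, $p=4$, $m=1$ and uses $\alpha=\alpha_0=\frac12$, whereas your route only delivers $\alpha=\frac34$ there. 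Similarly for $q=n$ one has $\alpha_0=1-\frac{m}{p}$, and interpolating against $L^s$ with $s<\infty$ gives exponents $\frac{1/m-1/p}{1/m-1/s}>\alpha_0$ that approach the endpoint only as $s\to\infty$, where the constant of $W^{1,n}\hookrightarrow L^s$ blows up; the endpoint case (e.g.\ Ladyzhenskaya's $\Vert u\Vert_4^2\le C\Vert u\Vert_2\Vert u\Vert_{1,2}$ in $n=2$) is out of reach of the limiting embedding. The repair is standard: for $q>n$ interpolate against the scale-critical H\"older bound $\Vert u\Vert_\infty\le C\,[u]_{C^{0,\beta}}^{\frac{n/m}{\beta+n/m}}\Vert u\Vert_m^{\frac{\beta}{\beta+n/m}}$ with $\beta=1-\frac{n}{q}$, which combined with $\Vert u\Vert_p\le\Vert u\Vert_\infty^{1-m/p}\Vert u\Vert_m^{m/p}$ and Morrey's $[u]_{C^{0,\beta}}\le C\Vert u\Vert_{1,q}$ yields exactly the exponent $\alpha_0$; for $q=n$ run Nirenberg's device (apply the $q<n$ case to powers $|u|^\gamma$) instead of the limiting embedding. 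One further correction of reading: the clause that (\ref{pq}) holds ``with sharp inequality when $m=1$ or $q=1$'' means that in those cases only $\alpha>\alpha_0$ is admitted --- the endpoint is excluded from the hypothesis --- not, as your last paragraph suggests, that the estimate holds only at $\alpha=\alpha_0$; since it is a restriction on the admissible $\alpha$ there is nothing to verify, and the fact that your subcritical argument happens to reach the endpoint even for $q=1$ or $m=1$ is harmless.
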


\noindent
The following consequence of this proposition  will be also helpful.
\begin{proposition}\label{additive G-N} 
For any  $u\in W^{1,2}(\Omega)$ and $\varepsilon_1>0$  
\[\int_\Omega u^2 dx\leq \varepsilon_1 \int_\Omega |\nabla u|^2dx + C_{\varepsilon_1} \left(\int_\Omega u dx \right)^2\quad \mbox{with some constant }\;\;\;C_{\varepsilon_1}>0\,.
\]
\end{proposition}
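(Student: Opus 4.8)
The plan is to derive this additive inequality directly from the multiplicative Gagliardo--Nirenberg estimate of Proposition \ref{G-N} by an application of Young's inequality followed by an absorption argument. First I would invoke Proposition \ref{G-N} with the particular choice of exponents $p=2$, $q=2$ and $m=1$. For these values the admissibility constraint (\ref{pq}) reads
\[\alpha\geq \frac{\frac{n}{1}-\frac{n}{2}}{\frac{n}{1}+1-\frac{n}{2}}=\frac{n}{n+2}\in(0,1),\]
so one may take $\alpha=\frac{n}{n+2}\in(0,1)$ for every $n\geq 1$; this is the key point that makes the scheme work, since the resulting exponent on the gradient norm is strictly below the power $2$ that we want on the left. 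The proposition then yields
\[\Vert u\Vert_2\leq C_{G-N}\,\Vert u\Vert_{1,2}^{\alpha}\,\Vert u\Vert_1^{1-\alpha}.\]

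Next I would square this estimate and apply Young's inequality $xy\leq \varepsilon' x^{1/\alpha}+C_{\varepsilon'}\,y^{1/(1-\alpha)}$ with the conjugate exponents $1/\alpha$ and $1/(1-\alpha)$ to the product $\Vert u\Vert_{1,2}^{2\alpha}\,\Vert u\Vert_1^{2(1-\alpha)}$. Because $2\alpha\cdot\frac{1}{\alpha}=2$ and $2(1-\alpha)\cdot\frac{1}{1-\alpha}=2$, this produces
\[\Vert u\Vert_2^2\leq \varepsilon'\,\Vert u\Vert_{1,2}^2+C_{\varepsilon'}\,\Vert u\Vert_1^2\]
for an arbitrary $\varepsilon'>0$. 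The only mildly delicate step is that the full Sobolev norm satisfies $\Vert u\Vert_{1,2}^2=\Vert u\Vert_2^2+\Vert\nabla u\Vert_2^2$, so the right-hand side still contains a copy of $\Vert u\Vert_2^2$. I would therefore restrict to $\varepsilon'<1$, move the term $\varepsilon'\Vert u\Vert_2^2$ to the left and divide by $1-\varepsilon'$, obtaining
\[\Vert u\Vert_2^2\leq \frac{\varepsilon'}{1-\varepsilon'}\,\Vert\nabla u\Vert_2^2+\frac{C_{\varepsilon'}}{1-\varepsilon'}\,\Vert u\Vert_1^2.\]

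Finally, given any prescribed $\varepsilon_1>0$ I would pick $\varepsilon'$ small enough that $\frac{\varepsilon'}{1-\varepsilon'}\leq\varepsilon_1$ and set $C_{\varepsilon_1}:=\frac{C_{\varepsilon'}}{1-\varepsilon'}$, which gives exactly the claimed bound with $\int_\Omega|\nabla u|^2\,dx$ on the right. The statement is meant for the non-negative densities to which it is applied later, so that $\left(\int_\Omega u\,dx\right)^2=\Vert u\Vert_1^2$ and the two formulations coincide; for a signed $u$ the argument only delivers the stronger quantity $\Vert u\Vert_1^2$ on the right-hand side, and indeed a bound involving $\left(\int_\Omega u\,dx\right)^2$ cannot hold for nonconstant mean-zero functions. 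I do not expect any serious obstacle here: the whole argument is a routine interpolation-plus-Young manipulation, and the single point requiring care is the absorption of the lower-order term $\Vert u\Vert_2^2$ that arises because Proposition \ref{G-N} is phrased in terms of the full $W^{1,2}$ norm rather than the gradient alone.
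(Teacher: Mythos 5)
Your argument is correct and is precisely the interpolation-plus-Young derivation that the paper leaves implicit (Proposition \ref{additive G-N} is stated without proof, merely as a ``consequence'' of Proposition \ref{G-N}). Two small remarks: since the admissibility condition in Proposition \ref{G-N} is stated as a \emph{sharp} inequality when $m=1$, you should take some $\alpha\in\left(\tfrac{n}{n+2},1\right)$ rather than the endpoint value, which changes nothing in the absorption step; and your observation that the inequality as written holds only for sign-definite $u$ (it is applied to the non-negative density $P$, and fails for non-constant mean-zero functions) is accurate and worth recording.
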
 
The lemma below which will be used several times in the proof of Theorem 1 is based on well known semigroup estimates 
\begin{lemma}\label{lemma1}
Suppose that (\ref{Q})-(\ref{phi}) are satisfied with $q_0\geq 1$, $q\geq q_0$, $m\in\{0\,,1\}$ and for some $\theta \in (0\,,1)$ the  parameter $p\in [1,\infty]$ satisfies
\begin{equation}\label{p1}
\frac{m}{2}-\frac{n}{2p} +\frac{n}{2q}<\theta< 1-\frac{n}{2}\left(\frac{1}{q_0}-\frac{1}{q}\right)\,,
\end{equation} 
provided $Q\equiv 0$. Otherwise,  we assume that $p$  satisfies  (\ref{p1})  and in addition for some $\varepsilon> 0$ there holds
\begin{equation} \label{p3}
0<\theta<\frac{1}{2}-\varepsilon \,.
\end{equation} 
Then there exist constants $C_0$ and $\mu_0$ such that the solution $u\in C([0\,,T):W^{1,r}(\Omega))$ to (\ref{aux})  satisfies
\begin{eqnarray}\nonumber
\Vert u(t)\Vert_{m,p}\leq C_0((t-\tau)^{-\theta} \Vert u(\tau) \Vert_q &+&\Gamma(1-\alpha) \mu_0^{1-\alpha}\sup_{\tau\in [0\,,T)}\Vert Q(t)\Vert_{X_q} \\ \label{mp}
&+& \Gamma(1-\beta) \mu_0^{1-\beta}\sup_{t\in [\tau\,,T)}\Vert \varphi(t)\Vert_{q_0} )
\end{eqnarray}
where $\Gamma (\cdot)$ is Euler's gamma function and 
\begin{equation}\label{ab}
\alpha=\frac{1}{2}+ \theta+ \varepsilon\,,\quad \beta=\theta +\frac{n}{2}\left( \frac{1}{q_0} -\frac{1}{q}\right)\,.
\end{equation} 
\end{lemma}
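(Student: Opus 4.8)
The plan is to run the standard analytic-semigroup/fractional-power argument on the Duhamel formula (\ref{Duh}), estimating the three contributions separately in $L^q(\Omega)$ and then converting to the $W^{m,p}$ norm through an embedding of a fractional power domain. Since $\eta>0$, the operator $A+\eta$ is sectorial on $L^q(\Omega)$ with spectrum bounded away from the origin, so its fractional powers $(A+\eta)^\theta$ are well defined and, by the classical estimates recalled in \cite{Henry}, one has $\|(A+\eta)^\theta e^{-(A+\eta)t}v\|_{q_2}\le C\,t^{-\theta-\frac n2(1/q_1-1/q_2)}e^{-\mu_0 t}\|v\|_{q_1}$ for $q_1\le q_2$ and some $\mu_0>0$. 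The left inequality in (\ref{p1}), i.e.\ $\theta>\frac m2-\frac{n}{2p}+\frac{n}{2q}$, is exactly the condition ensuring the continuous embedding $D\big((A+\eta)^\theta\big)\hookrightarrow W^{m,p}(\Omega)$ with the fractional domain taken in $L^q$; hence $\|u(t)\|_{m,p}\le C\|(A+\eta)^\theta u(t)\|_q$ and it suffices to bound the $L^q$ norms of $(A+\eta)^\theta$ applied to each term of (\ref{Duh}).

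For the homogeneous term I would use the smoothing estimate with $q_1=q_2=q$ to get $\|(A+\eta)^\theta e^{-(A+\eta)(t-\tau)}u(\tau)\|_q\le C(t-\tau)^{-\theta}e^{-\mu_0(t-\tau)}\|u(\tau)\|_q$, and bounding the exponential by $1$ produces the first term on the right of (\ref{mp}). For the term carrying $\varphi$, I move $(A+\eta)^\theta$ under the integral and apply the $L^{q_0}\to L^q$ estimate with total singularity exponent $\beta=\theta+\frac n2(1/q_0-1/q)$ as in (\ref{ab}); this leads to $\int_\tau^t(t-s)^{-\beta}e^{-\mu_0(t-s)}\|\varphi(s)\|_{q_0}\,ds$. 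The substitution $\sigma=t-s$ together with the bound of the finite integral by the one over $[0,\infty)$, legitimate thanks to the exponential decay, gives a multiple of $\Gamma(1-\beta)$ times a power of $\mu_0$ and the factor $\sup_{s}\|\varphi(s)\|_{q_0}$, uniformly in $t$; the integral is finite precisely because the right inequality in (\ref{p1}) forces $\beta<1$. This is the $\varphi$ contribution of (\ref{mp}), and it is here that the exponential decay coming from $\mu_0$ is essential, since it makes the estimate uniform in $t\in[\tau,T)$ rather than growing with $T$.

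The hard part will be the divergence term $\int_\tau^t e^{-(t-s)(A+\eta)}\nabla\cdot Q(s)\,ds$, where the extra derivative inside $\nabla\cdot$ must be absorbed by the semigroup. My plan is to exploit the $L^q$-boundedness, for $1<q<\infty$, of the modified Riesz transform $(A+\eta)^{-1/2-\varepsilon}\nabla\cdot$, which follows from the analyticity of the Neumann Laplacian together with the boundedness of $(A+\eta)^{-1/2}\nabla\cdot$ and of the negative power $(A+\eta)^{-\varepsilon}$. Writing $\nabla\cdot Q=(A+\eta)^{1/2+\varepsilon}\big[(A+\eta)^{-1/2-\varepsilon}\nabla\cdot Q\big]$ and using $\|(A+\eta)^{-1/2-\varepsilon}\nabla\cdot Q\|_q\le C\|Q\|_q$, the integrand is controlled by $\|(A+\eta)^{\theta+1/2+\varepsilon}e^{-(t-s)(A+\eta)}\|_{L^q\to L^q}\|Q(s)\|_q\le C(t-s)^{-\alpha}e^{-\mu_0(t-s)}\|Q(s)\|_q$ with $\alpha=\tfrac12+\theta+\varepsilon$ as in (\ref{ab}). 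Integrating as before yields a multiple of $\Gamma(1-\alpha)$ and $\sup_{s}\|Q(s)\|_{q}$; convergence of the time integral requires $\alpha<1$, which is exactly the additional hypothesis (\ref{p3}), $\theta<\tfrac12-\varepsilon$, imposed only when $Q\not\equiv0$. Summing the three bounds and collecting all constants into a single $C_0$ gives (\ref{mp}). The only genuinely delicate point is the boundedness of $(A+\eta)^{-1/2-\varepsilon}\nabla\cdot$ on $L^q$, which rests on the $L^q$-theory (bounded imaginary powers / gradient heat-kernel bounds) for the Neumann Laplacian on the smooth bounded domain $\Omega$; the small margin $\varepsilon>0$ is what keeps us safely inside the range where this operator is bounded and the time integral converges.
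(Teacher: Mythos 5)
Your argument is essentially the paper's own proof: the same Duhamel decomposition, the same embedding $D\bigl((A+\eta)^\theta\bigr)\hookrightarrow W^{m,p}$ under the left inequality of (\ref{p1}), the same three semigroup smoothing estimates with exponents $\theta$, $\beta$ and $\alpha$, and the same $\Gamma$-function evaluation of the time integrals. The only difference is that you sketch a derivation (via boundedness of $(A+\eta)^{-1/2-\varepsilon}\nabla\cdot$ on $L^q$) of the divergence-form estimate, which the paper simply quotes from the literature; this is a correct and standard way to obtain that estimate.
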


\noindent
\dem
The proof is based on well known    estimates which may be found in \cite{HW} or  \cite{WWS} in a more compact form. For $u\in D(A+\eta)^\theta$ where $\eta >0$ and $\theta\in(0,1)$, $q_0\geq 1$, $q\geq q_0$, $m\in\{0\,,1\}$ and $p\in [1,\infty]$ such that 
\[\frac{m}{2}-\frac{n}{2p} < \theta +\frac{n}{2q}\]
there holds
\[ \Vert u\Vert_{m,p}\leq C_1\Vert (A+\eta I)^\theta u\Vert_q
\] 
for some constant $C_1$. Next for $u\in L^{q_0}(\Omega)$, $q\geq q_0$, there exist $C_2>0$ and $\mu_0$ such that 
\[ \Vert (A+\eta I)^\theta e^{-t(A+\eta I)}u\Vert_q\leq C_2 t^{-\theta -\frac{n}{2}(1/q_0-1/q)} e^{-\mu_0 t} \Vert u\Vert_{q_0}\,.
\]
Moreover, for any $q\in (1\,,\infty)$ and $\varepsilon$ there exist a constant $C_3>0$ such that 
\[\Vert (A+\eta I)^\theta e^{-t(A+\eta I)}\nabla \cdot u\Vert_q\leq C_3 t^{-\theta -\frac{1}{2} -\varepsilon} e^{-\mu_0 t} \Vert u\Vert_{q}\,.
\]
Thus, making use of (\ref{Duh}) and  (\ref{p1})-(\ref{p3}) we obtain for $C_0=\max\{C_1C_2\,,C_1 C_3\}$ 
\begin{align*}
&\Vert u(t)\Vert_{m,p} \leq  C_1  \Vert (A+\eta I)u(t)\Vert_q \leq C_1 (\Vert (A+\eta I)^\theta e^{-(t-\tau)(A+\eta I)}u(\tau) \Vert_q\\
& + C_2\int_\tau^t (t-s)^{-\alpha} e^{-\mu_0 (t-s)}\Vert Q(s)\Vert_{X_q} ds 
 + C_3\int_\tau^t(t-s)^{-\beta} e^{-\mu_0 (t-s)}\Vert \varphi(s)\Vert_{q} ds \\
&\leq  C_0\left( (t-\tau)^{-\theta} \Vert u_0\Vert_q +  \int_0^{\infty} \sigma^{-\alpha} e^{-\mu_0\sigma} d\sigma (\sup_{t\in [\tau\,,T_{max})}\Vert Q(t)\Vert_{X_q}) \right.\\
&+ \left.\int_\tau^{\infty} \sigma^{-\beta} e^{-\mu_0\sigma} d\sigma (\sup_{t\in [\tau\,,T_{max})}\Vert \varphi(t)\Vert_{q_0}\right) \,.
\end{align*}
where $\alpha $ and $\beta$ satisfy (\ref{ab}). Hence, using the definition of Euler's gamma function 
\[ \Gamma(a)=\frac{1}{x^{-a}} \int_0^{\infty} \sigma^{a-1} e^{-x\sigma} d\sigma\quad\mbox{for}\;\; a>0, x>0
\] 
(see e.g. \cite{Henry})  (\ref{mp}) follows. \qed  

\begin{lemma}\label{L1bound} For any solution  $(N,P,W)\in (C([0\,,T_{max}):W^{1,r}(\Omega))\cap 
C^{2,1}(\bar{\Omega}\times (0\,,T_{max})))^3$  to  system (\ref{G1}) satisfying  initial and boundary conditions (\ref{IC})-(\ref{BC}) there exist a constant $M>0$ such that 
\begin{equation}\label{M}
\sup_{t\in [0\,,T_{max})}(\Vert N(t)\Vert_{1}+\Vert P(t)\Vert_{1}+ \Vert W(t)\Vert_{1} )\leq M\,. 
\end{equation}
 \end{lemma}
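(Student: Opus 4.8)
The plan is to integrate each equation of (\ref{G1}) over $\Omega$ and exploit the divergence structure together with the Neumann conditions (\ref{BC}) so as to eliminate all diffusion and taxis terms, reducing the matter to a cascade of scalar differential inequalities for the three mass functionals $\int_\Omega N$, $\int_\Omega P$ and $\int_\Omega W$. Throughout I would use that a solution issued from nonnegative data stays nonnegative, so that $N,P,W\geq 0$ and hence $F,g\geq 0$ by (H2)--(H3). The vanishing of the flux terms is immediate: each of $D_1\Delta N+\nabla\cdot(\chi N\nabla W)$, $D_2\Delta P-\nabla\cdot(\xi P\nabla N)$ and $D_3\Delta W$ is in divergence form, so its integral over $\Omega$ equals a boundary flux that is zero by (\ref{BC}).

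First I would handle the prey mass. Integrating the first equation and discarding the nonnegative consumption term $\int_\Omega FP\geq 0$ gives, with $y(t):=\int_\Omega N$,
\[
y'=\int_\Omega f(N)-\int_\Omega F(N,P,W)P\leq \int_\Omega\bigl(r_1N-r_2N^2\bigr)\leq r_1 y-\frac{r_2}{|\Omega|}\,y^2,
\]
the last step being Jensen's inequality $\int_\Omega N^2\geq |\Omega|^{-1}y^2$ and (H1). This is a logistic differential inequality, so a comparison argument yields $y(t)\leq\max\{\|N_0\|_1,\,r_1|\Omega|/r_2\}$ for all $t\in[0,T_{max})$, i.e.\ a uniform bound on $\|N\|_1$.

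The key step is the predator mass, because integrating the second equation by itself leaves the production term $+b\int_\Omega FP$, for which no a priori $L^1$ control is available. The remedy is to track the conversion-balanced combination $z(t):=\int_\Omega(bN+P)$: integrating the first two equations, the reaction terms $\mp FP$ cancel by design, leaving $z'=b\int_\Omega f(N)-\delta\int_\Omega P$. Adding $\delta z$ to both sides and using (H1) in the pointwise form $f(N)+\delta N\leq (r_1+\delta)N-r_2N^2\leq (r_1+\delta)^2/(4r_2)$, I obtain $z'+\delta z\leq b|\Omega|(r_1+\delta)^2/(4r_2)=:C_1$. Comparison then bounds $z$, whence $\|P\|_1\leq z\leq\max\{z(0),C_1/\delta\}$ uniformly in $t$.

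Finally, integrating the third equation and invoking (H3) ($g\leq C_gP$) gives, with $w(t):=\int_\Omega W$,
\[
w'=-\mu w+\int_\Omega g(N,P,W)\leq -\mu w+C_g\|P\|_1\leq -\mu w+C_gC_2,
\]
where $C_2$ is the bound on $\|P\|_1$ just obtained; a further comparison yields a uniform bound on $\|W\|_1$, and collecting the three bounds produces the constant $M$ in (\ref{M}). The only genuinely structural point is the cancellation achieved by weighting $N$ with the conversion coefficient $b$ in the combination $bN+P$, which reflects the mass balance of the kinetics; everything else is routine scalar ODE comparison and the vanishing of the flux terms under the Neumann conditions. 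Note that the argument is insensitive to $\xi$ and to the space dimension, so it applies verbatim to both model A and model B for any $n\geq 1$.
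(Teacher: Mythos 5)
Your proof is correct and follows essentially the same strategy as the paper: integrate over $\Omega$, kill the divergence-form terms with the Neumann conditions, cancel the functional-response terms through the conversion weight $b$, control the logistic contribution via (H1), and close with scalar ODE comparison. The only difference is organizational --- the paper works with the single weighted functional $\int_\Omega N+\frac{1}{b}\int_\Omega P+\frac{\delta}{2bC_g}\int_\Omega W$ and one differential inequality (the $W$-coefficient is chosen so that $\int_\Omega g\leq C_g\int_\Omega P$ is absorbed by half of the $-\frac{\delta}{b}\int_\Omega P$ term), whereas you run a cascade in which $\|P\|_1$ is bounded first and then fed into the $W$-equation as a constant forcing term; both are valid, and your separate opening step for $\|N\|_1$ is in fact redundant, since the bound on $\int_\Omega(bN+P)$ already controls it.
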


\noindent
\dem Using (H1)-(H3) and the boundary conditions we obtain  after integration and summing up the  equations   that  
\begin{eqnarray*}
& &\frac{d}{dt}\left(\int_\Omega N(x,t)dx +\frac{1}{b}\int_\Omega P(x,t)dx + \frac{\delta}{2bC_g}\int_\Omega W(x,t)dx\right)\\
& & \leq -\frac{\delta}{b}\int_\Omega P(x,t)dx + \int_\Omega f(N(x,t))dx -\frac{\mu \delta}{2bC_g}\int_\Omega W(x,t)dx + 
\frac{\delta}{2bC_g}\int_\Omega g(N,P,W) dx\\
& &\leq  -\frac{\delta}{b}\int_\Omega P(x,t)dx  + \int_\Omega\left(r_1 N(x,t) dx - r_2  N(x,t)^2\right) dx +\frac{\delta}{2b}\int_\Omega P(x,t)dx \\
& &\;-\frac{\mu \delta}{2bC_g}\int_\Omega W(x,t)dx
\end{eqnarray*} 
It is easy to check that 
\[r_1 N -r_2 N^2< \frac{3r_1^2}{4r_2}- \frac{r_1}{2}N 
\]
and hence we obtain 
\begin{eqnarray*}
& & \frac{d}{dt}\left(\int_\Omega N(x,t)dx +\frac{1}{b}\int_\Omega P(x,t)dx +\frac{\delta}{2bC_G}\int_\Omega W(x,t)dx\right) \\
& &\leq-\frac{\delta}{2b}\int_\Omega P(x,t)dx -\frac{r_1}{2}\int_\Omega N(x,t)dx -\frac{\mu \delta}{2bC_G}\int_\Omega W(x,t)dx +\frac{3r_1^2}{4r_2}|\Omega|\\
& & \leq -\min\left\{\frac{\delta}{2}\,,\frac{r_1}{2}\,,\mu\right\}\left(\int_\Omega N(x,t)dx +\frac{1}{b}\int_\Omega P(x,t)dx+ \frac{\delta}{2bC_G}\int_\Omega W(x,t)dx \right) + \frac{3r_1^2}{4r_2}
|\Omega|\,.
\end{eqnarray*} 
Next, for $t\in [0,T_{max}) $ we use the inequality 
\[
\frac{dv}{dt}+ c_1 v(t) \leq c_0
\]
with $v(t)=\Vert N(t)\Vert_1 +\frac{1}{b}\Vert P(t)\Vert_1 +\frac{\delta}{2bC_g}\Vert W(t)\Vert_1$, $c_1=\min\left\{\frac{\delta}{2}\,,\frac{r_1}{2}\,,\mu\right\}$ and $c_0=\frac{3r_1^2}{4r_2}$.  Hence,
\[v(t)\leq \frac{c_0}{c_1}(1-e^{-c_1 t})+v(0)e^{-c_1 t}\leq \max\left\{v(0)\,, \frac{c_0}{c_1}\right\}
\]
and then 
\begin{eqnarray*}
\Vert N(t)\Vert_1 &+&\frac{1}{b}\Vert P(t)\Vert_1 +\frac{\delta}{2bC_G}\Vert W(t)\Vert_1 \leq \\
& &\max\left\{\Vert N_0\Vert_1 +\frac{1}{b}\Vert P_0\Vert_1 +\frac{\delta}{2bC_G}\Vert W_0\Vert_1\,,
\frac{\frac{3r_1^2}{4r_2}|\Omega|}{\min\{\delta\,,\frac{r_1}{2}\,,\mu\}}\right\}\,.
\end{eqnarray*}
whence (\ref{M}) immediately follows.\qed

\noindent
{\it Proof of Theorem \ref{exist}.}
The local in-time existence of solutions for similar problems have been considered  in many papers  therefore we present it in abbreviated form. We first notice that  in the case of Model B as well as in the case of model A upon exchange of the first and the second equation  the main  part of the quasilinear parabolic system is a normally elliptic operator with upper-triangular structure and the existence and uniqueness  of  maximal classical solution 
\[(N,P,W)\in (C([0\,,T_{max}):W^{1,r}(\Omega))\cap 
C^{2,1}(\bar{\Omega}\times (0\,,T_{max})))^3\] 
satisfying  initial and boundary conditions (\ref{IC})-(\ref{BC})
follows from  Amann's theory \cite[Theorems~14.4 \&~14.6]{Am93} (see e.g. \cite{Ahnand,JinWang,WWS} for details). Moreover in this case it is known that a uniform in time $L^\infty$-bound for the solution is enough to warrant that in fact $T_{max}=+\infty$.  The non-negativity of solutions easily follows from the maximum  principle. 

We first consider the case of model A for $n=1$. Owing to  Lemma \ref{L1bound} we may apply Lemma\ref{lemma1} to  $W$-equation with $Q\equiv 0$, $q_0=1$, $q>1$,  $ p\in (1,\infty)$ and $\theta\in (\frac{1}{2}-\frac{1}{2p}+\frac{1}{2q}, \frac{1}{2}+\frac{1}{2q})$  to obtain that  there is $\tau_0>0$ and constant $K_W(\tau_0)$  such that 
\begin{equation}\label{W1p}
\sup_{t\in [\tau_0 \,, T_{max})} \Vert W (t) \Vert_{1,p}\leq K_W(\tau_0)
\end{equation} 
Next, we turn to $N$-equation. On multiplying it by $N(\cdot,t)^{k-1} $, $k\geq 2$,  for $t\in[\tau_0 , T_{max})$ we arrive at
\begin{align*}
\frac{1}{k} \frac{d}{dt}\int_\Omega N^kdx 
&= -\frac{4(k-1)}{k^2}D_1 \int_\Omega |\nabla (N^{k/2})|^2dx +\frac{2(k-1)}{k}\chi \int _\Omega N^{k/2}\nabla (N^{k/2})\nabla W dx\\
& +\int_\Omega N^k f(N)dx - \int_\Omega N^kPF(N,P,W) dx\,.
\end{align*}
Using the H\"{o}lder inequality to the second term on the r.h.s., next (H1) and the non-negativity of solutions we obtain for 
$t\geq\tau_0$
\begin{align*}
&\frac{1}{k} \frac{d}{dt}\int_\Omega N^kdx + \frac{4(k-1)}{k^2}D_1 \int_\Omega |\nabla (N^{k/2})|^2dx \leq \\
& \leq \frac{2(k-1)}{k}\chi \left(\int _\Omega N^{k+1}dx\right)^{\frac{k}{2(k+1)}}\left(\int_\Omega |\nabla (N^{k/2})|^2 dx\right)^{1/2}\left(\int_\Omega|\nabla W|^{2(1+k)}dx\right)^{\frac{1}{2(1+k)}} \\
& + r_1\int_\Omega N^k dx - r_2\int_\Omega N^{k+1} dx\,.
\end{align*}
On account of  (\ref{W1p}) and Young inequality with $\varepsilon =\frac{2D_1}{\chi}$ we may write  
\[\frac{1}{k} \frac{d}{dt}\int_\Omega N^kdx\leq K_0 \left(\int _\Omega N^{k+1}dx\right)^{\frac{k}{k+1}}+ r_1\int_\Omega N^k dx - 
r_2\int_\Omega N^{k+1} dx.
\]
where $K_0$ depends on $k$ and $K_W(\tau_0)\,.$  
Since
\[K_0\left(\int _\Omega N^{k+1}dx\right)^{\frac{k}{k+1}}\leq \varepsilon \int _\Omega N^{k+1}dx + 
C_\varepsilon K_0^{\frac{k}{k+1}}
\]
taking $\varepsilon= \frac{r_2}{2}$ we get 
\[\frac{1}{k} \frac{d}{dt}\int_\Omega N^kdx   + \frac{r_2}{2}\leq \int_\Omega N^{k+1} dx \leq  r_1\int_\Omega N^k dx +C_\varepsilon K_0^{\frac{k}{k+1}}\,.
\]
The application of  H\"{o}lder's inequality yields
\[\left(\int _\Omega N^{k}dx\right)^{\frac{k+1}{k}}\leq K_1\int _\Omega N^{1+k}dx
\]
where $K_1$ depends on $k$ and $|\Omega|$  and finally  we obtain 
\[\frac{1}{k} \frac{d}{dt}\int_\Omega N(t)^kdx + \frac{r_2}{2K_1}\left(\int _\Omega N^{k}dx\right)^{\frac{k+1}{k}}\leq r_1\int_\Omega N^k dx +C_\varepsilon K_0^{\frac{k}{k+1}}\,.
\]
Whence, setting $\xi(t):=\int_\Omega N(\cdot, t)^kdx$ and making comparison with differential equation
\[\frac{d\xi}{dt} =-c_1\xi^\gamma +c_2\xi+c_0 \]
with $c_1=\frac{r_2k}{2K_1}\,, c_2=r_1\,, c_0=kC_\varepsilon K_0^{\frac{k}{k+1}}$ 
we have that 
\[\xi(t)\leq \max\{\xi (0)\,, \bar{\xi}\}
\]
where $\bar{\xi} $ solves the equation $-c_1\xi^\gamma +c_2\xi+c_0=0 \,.$
Thus for any $k\geq 2$ there exists $C(k)$  such that
\begin{equation}\label{Nk}
\Vert N(\cdot, t) \Vert_k\leq C(k)\quad\mbox{for}\;\; t\in(\tau_0\,,T_{max})\,.
\end{equation}
 Notice that from (\ref{W1p}) and (\ref{Nk}) it follows using H\"{o}lder's inequality that for any $q\in(1,\infty) $ there is a constant $C(q) $ such that 
\[\Vert N(\cdot, t)\nabla W(\cdot, t)\Vert_q\in C(q)\quad\mbox{for}\;\; t\in(\tau_0\,,T_{max})\,.
\]  
Now we are in a position to apply again Lemma \ref{lemma1} with $ Q=N\nabla W$,$\eta=1$, $\tau=\tau_0$,  and  $\varphi= N+ f(N)-PF(N,P,W)$. 
To this end we take  $n=1$ and $q_0=1$ because the only available estimate of $PF(N,P,W)$ comes from Lemma \ref{L1bound}. Thus  for any $p\in(1,\infty)$ we may choose $q>p$ such that for 
$\theta\in (\frac{1}{2}-\frac{1}{2p}+\frac{1}{2q}, \frac{1}{2}-\varepsilon)$ and 
$\varepsilon <\frac{1}{2} (1/p -1/q)$  conditions 
(\ref{p1}) -(\ref{p3}) are indeed satisfied and there is $\tau_1\in(\tau_0, T_{max})$ and  a constant $K_N(\tau_1,p)$ such that 
\begin{equation}\label{N1p}
\sup_{t\in(\tau_1\,,T_{max})} \Vert N(\cdot, t) \Vert_{1,p}\leq K_N(\tau_1, p)\,.
\end{equation}
Now we turn to $P$-equation. On multiplying it by $P(\cdot,t)$ for $t\in(\tau_1,T_{max})$, integrating on $\Omega$  and making use of (H2) we obtain 
\begin{equation}\label{Pt}
\frac{1}{2} \frac{d}{dt}\int_\Omega P^2dx + D_2\int_\Omega |\nabla P|^2dx +\mu \int_\Omega P^2dx
\leq \int_\Omega P\nabla N \nabla P dx + C_F\int_\Omega P^2dx\,.
\end{equation}
By Young's and H\"{o}lder's inequalities  we have
\begin{align}\nonumber
\int_\Omega P\nabla N \nabla P dx &\leq \varepsilon \int_\Omega |\nabla P|^2dx +C_\varepsilon \int_\Omega |\nabla N |^2  P^2 dx \\ \label{npn}
&\leq \varepsilon \int_\Omega |\nabla P|^2dx +C_\varepsilon \Vert P\Vert_4^2\vert \nabla N\Vert^2_4
\end{align}
and by the Gagliardo-Nirenberg inequality (Proposition \ref{G-N}) we obtain
\begin{equation}\label{npn1}
 \int_\Omega P\nabla N \nabla P dx \leq \varepsilon \int_\Omega |\nabla P|^2dx + C_\varepsilon C_{G-N} \Vert P\Vert_{1,2}\Vert P\Vert_{1} \Vert \nabla N\Vert^2_4 \,.
\end{equation} 
and yet another application of Young's inequality with $\varepsilon_0$ along with Lemma \ref{L1bound} and  (\ref{N1p}) with $p=4$ yields
\begin{equation}\label{K'}
\int_\Omega P\nabla N \nabla P dx \leq \varepsilon \int_\Omega |\nabla P|^2dx + \varepsilon_0 \left(\int_\Omega |\nabla P|^2dx + \int_\Omega P^2dx \right) + K'
\end{equation}
where $K'$ is a constant depending on $M$, $C_{G-N}$ and $K_N(\tau,4)$  stemming from (\ref{N1p}). On the other hand by  Proposition \ref{additive G-N} we have
\[ \int_\Omega P^2 dx\leq \varepsilon_1\int_\Omega |\nabla P|^2dx +C_{\varepsilon_1}\Vert P\Vert_1^2
\]
for any $\varepsilon_1>0$. Combining (\ref{K'}) with (\ref{Pt}) we arrive at 
\begin{eqnarray*}
& &\int_\Omega P\nabla N \nabla P dx + C_F\int_\Omega P^2dx \leq (\varepsilon +\varepsilon_0)\int_\Omega |\nabla P|^2dx+ (C_F+\varepsilon_0)\int_\Omega P^2dx +K'\\
& &\leq (\varepsilon +\varepsilon_0)\int_\Omega |\nabla P|^2dx+ (C_F+\varepsilon_0)\left(\varepsilon_1\int_\Omega |\nabla P|^2dx +C_{\varepsilon_1} \Vert P\Vert_1^2\right) +K'\,.
\end{eqnarray*}
Now choosing $\varepsilon=\varepsilon_0=\frac{1}{4}D_2$ and then $\varepsilon_1=\frac{D_2}{2(C_F+\varepsilon_0)}$ we obtain from (\ref{Pt})
the following differential inequality
\[ \frac{d}{dt}\int_\Omega P^2dx +2\mu \int_\Omega P^2dx \leq K''\]
where $K''$ is a positive constant depending on $C_F, M$ and $K'$.
It follows that 
\begin{equation}\label{P2}
\sup_{t\in(\tau_1, T_{max})}\Vert P(t)\Vert_2\leq \max\left\{\Vert P_0\Vert_2\,,\frac{K''}{2\mu}\right\}\,.
\end{equation}
Owing to this bound and (\ref{N1p})  we  apply Lemma \ref{lemma1} to $P$ equation with $Q=P\nabla N$, $\eta=\mu$  and $\varphi=PF(N)$ choosing $n=1$, $q=q_0=\frac{3}{2}$ and $m=0$. It is easy to check that then (\ref{p1})-(\ref{p3}) are satisfied for $p=\infty$ and $0<\varepsilon<\frac{1}{6}$. It follows that there is a constant $K_P(\tau_1)$ such that 
\begin{equation}\label{Pinf}
\sup_{t\in(\tau_1, T_{max})}\Vert P(\cdot,t)\Vert_\infty\leq K_P(\tau_1)\,.
\end{equation}
In the light of the embedding $ W^{1,p}(\Omega) \subset L^\infty(\Omega)$ for $n=1$ and any  $p>1$ as well as  (\ref{W1p}), (\ref{N1p}) and (\ref{Pinf}) we infer  that there is  a constant $K(\tau_1) $ such that 
\[\sup_{t\in(\tau_1, T_{max})}\left(\Vert N(t)\Vert_\infty+\Vert P(t)\Vert_\infty+\Vert W(t)\Vert_\infty\right)\leq K(\tau_1)\,.
\]
This is a crucial estimate which according to Amann's theory allows to deduce that  $T_{max}=\infty$ 
This statement completes the proof of the global existence for model A when $n=1$. 

\noindent
{\it Existence of global solutions to Model B.}

The existence proof for Model B ( $\xi=0$) is less complicated  since now the taxis term is absent in P-equation and in the light of \cite{Alikakos} from  the the secon equation in (\ref{G1}) and $L^1(\Omega)$-bound in  Lemma\ref{L1bound} we deduce that there is a constant $P_\infty$ such that 
\begin{equation} \label{PinfB}
\Vert P(\cdot, t)\Vert_\infty\leq P_\infty\quad\mbox{for}\;\;t\in [0,T_{max}) \,.
\end{equation} 
Since 
\[\Vert P(\cdot,t)F(N(\cdot,t),P(\cdot,t),W(\cdot,t))\Vert_\infty \leq P_\infty C_F\quad\mbox{for}\;\;t\in [0,T_{max}) 
\]
  it follows using  Lemma\ref{lemma1} with $q=q_0>1$, $p=\infty\,, \theta\in(\frac{1}{2}+\frac{n}{2q}\,,1)$ that   for any $n\geq 1$ there exists $\tau\in (0,T_{max})$ and a constant $W_\infty$ such  that 
\begin{equation} \label{WinfB}
\Vert W(\cdot, t)\Vert_{1,\infty}\leq  W_\infty \quad\mbox{for}\;\;t\in [\tau,T_{max}) \,.
\end{equation} 
Using equation (\ref{G1})  we are in a position to proceed in  essentially the same way as  in \cite[Lemma3.2]{WWS}  to conclude that there  is a constant $N_\infty$   such that
\begin{equation} \label{NinfB}
\Vert N(t)\Vert_\infty\leq N_\infty\quad\mbox{for}\;\;t\in [0,T_{max}) \,.
\end{equation} 
Due to the fact that the main part of the operator is upper-triangular,  it follows from \cite[Theorem~15.5]{Am93} that the uniform $L^\infty$bound for all components of the solution ensures the extensibility criterion for the  existence of maximal solution  to conclude that   $T_{max}=+\infty$. 
Then using  the  parabolic regularity theory for $t>0$ we infer that  in fact  $(N,P,W) $ is a  classical solution to system (\ref{G1}). Moreover, 
it follows from (\ref{WinfB}),(\ref{PinfB}), (\ref{NinfB}) and  Lemma \ref{lemma1} with $p=q_0=q=\infty$ that for some $\tau>0$ there is a constant $C_N$ such that the following inequality holds
\begin{equation}\label{r1}
\Vert N(\cdot, t)\Vert_{1,\infty}+ \Vert P(t)\Vert_{1,\infty}+ \Vert W(\cdot, t)\Vert_{1,\infty}\leq C_N\quad\mbox{for}\;\;t\in [\tau,\infty) 
\end{equation}
which completes the existence proof for model B. 
 \hfill $\Box$

\medskip 
 As we shall see in the last section some numerical simulations (see Figure \ref{blowup})  indicate that the blow-up of solutions in finite time is possible for model A in the case of space dimension $n=2$. From the view point of biological applicability of the model there arises a question of finding mechanism of a possible blow-up prevention. One way to achieve this effect is to consider the volume filling effect for prey and/or predator which was already taken into account  for prey taxis models (see e.g. \cite{Bendamane} or \cite{Tao}). The other way is to warrant that the consumption rate and the chemical production rate decrease sufficiently rapidly with the increase of predator density. This is the case when  sufficiently strong interference among predators is assumed so that  the term $PF(N,P,W) $ is bounded for all $(N,P,W)\in 
\R_+^3$. Notice that this requirement is satisfied  when the Crowley-Martin functional response (\ref{CM}) is accounted for. We shall also require   in addition  that  signal production  is   proportional to the rate of prey consumption  (c.f. (\ref{demage}) and \cite{Kats}):
\begin{equation} \label{CMB}
F= F_{C-M}(N,P)=\frac{aN}{1+bN+cP+dNP}\quad\mbox{and} \;\;g=\gamma F_{C-M}(N,P)\,.
\end{equation} 
 This situation falls into the following hypothesis;

\medskip
\noindent
(H4) $F,g:\R_+^3 \mapsto \R_+$ are  $C^2$-functions  such that there exist constants $C'_F$  and $C'_G$ such that  for some  constants $C_g>0$ 
\[ F(N,P,W)\leq \frac{C'}{P}\quad\mbox{and} \;\; g(N,P,W)\leq C'_G\quad \mbox{for all}\;\;  (N,P,W)\in 
\R_+^3\,.
\]

\begin{theorem} \label{exist2}
Suppose that hypotheses (H1)-(H4) are satisfied and $N_0\,,P_0\,, W_0\in W^{1,r}(\Omega)$, $r>n$ are non-negative functions. Then for $n\leq 3$ model A ($\chi>0, \xi>0 $)  has  the  unique uniformly $(L(\Omega)^\infty)^3$- bounded global classical solution $(N,P,W)$ defined on $\bar{\Omega}\times [0\,,\infty) $ satisfying  initial and boundary  conditions (\ref{IC})-(\ref{BC})   such that 
\[(N,P,W)\in (C([0\,,T):W^{1,r}(\Omega))\cap 
C^{2,1}(\bar{\Omega}\times (0\,,T)))^3\quad\mbox{for any}\;\; T>0 \,. 
\]
\end{theorem}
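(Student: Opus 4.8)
The plan is to run the same bootstrap as in the proof of Theorem \ref{exist}, now exploiting the two additional bounds supplied by (H4): the signal production satisfies $g\leq C'_G$ and, since $F\leq C'/P$, the consumption term satisfies $PF(N,P,W)\leq C'$. These are exactly the two quantities that were available only in $L^1$ (or only through $g\leq C_gP$) in Theorem \ref{exist}, and it is their boundedness that will let the argument reach $n\leq3$. Local existence, uniqueness and non-negativity of the maximal solution on $[0,T_{max})$ follow from Amann's theory and the maximum principle exactly as before, and Lemma \ref{L1bound} still provides the $L^1$-bound (\ref{M}). The whole problem therefore reduces to establishing a time-uniform $L^\infty$-bound for $N$, $P$ and $W$ on $[0,T_{max})$; the extensibility criterion \cite[Theorem~15.5]{Am93} then yields $T_{max}=\infty$, and parabolic regularity gives the stated smoothness.

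I would begin with the signal. As $g$ is bounded, the comparison principle applied to the third equation gives $\|W(t)\|_\infty\leq\max\{\|W_0\|_\infty,C'_G/\mu\}$ for all $t$, and then Lemma \ref{lemma1} with $Q\equiv0$, $\varphi=g$, $q_0=q=p=\infty$, $m=1$ and $\theta\in(1/2,1)$ produces a uniform bound on $\|W(t)\|_{1,\infty}$, so that $\nabla W\in L^\infty$ \emph{in every dimension}. This is the decisive improvement over Theorem \ref{exist}, where only a $W^{1,p}$-estimate for $W$ was at hand.

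With $\nabla W$ bounded I would turn to the prey. Testing the $N$-equation with $N^{k-1}$, the repulsion term $\tfrac{2(k-1)}{k}\chi\int_\Omega N^{k/2}\nabla(N^{k/2})\cdot\nabla W\,dx$ is now absorbed into the diffusion by a single use of Young's inequality (the Gagliardo–Nirenberg step needed in Theorem \ref{exist} is no longer required), while the non-positive term $-\int_\Omega N^{k-1}PF\,dx$ is discarded and the superquadratic dissipation $-r_2\int_\Omega N^{k+1}\,dx$ from (H1) dominates $r_1\int_\Omega N^{k}\,dx$. An ODE comparison then gives $\|N(t)\|_k\leq C(k)$ for every $k$, uniformly in $t$ and in any dimension. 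Since now $\varphi=N+f(N)-PF$ lies in every $L^{q_0}$ and $Q=\chi N\nabla W$ in every $L^q$, a further application of Lemma \ref{lemma1} upgrades this to a uniform bound on $\|N(t)\|_{1,p}$ for each finite $p$; in particular $\|N\|_\infty$ is bounded and $\nabla N\in L^p$ for every finite $p$.

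The genuinely delicate part, and the one I expect to be the main obstacle, is the predator equation, which retains the prey-taxis flux $-\nabla\cdot(\xi P\nabla N)$. Here $\nabla N$ is controlled only in every finite $L^p$, not in $L^\infty$, so the estimate must be closed by interpolation rather than by direct absorption. I would first obtain an $L^2$-bound on $P$ by testing with $P$: writing $\xi\int_\Omega P\nabla N\cdot\nabla P\,dx\leq\varepsilon\int_\Omega|\nabla P|^2\,dx+C_\varepsilon\|P\|_{2s}^2\|\nabla N\|_{2s'}^2$ and then invoking Proposition \ref{G-N} together with Proposition \ref{additive G-N} to absorb the $\|\nabla P\|_2$ contribution and reduce everything to $\|P\|_1$ (bounded by Lemma \ref{L1bound}), while using $bPF\leq bC'$ to dispose of the reaction term. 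With $\|P(t)\|_2$ in hand I would raise the integrability of $P$ step by step through repeated use of Lemma \ref{lemma1} with $Q=\xi P\nabla N$ and the bounded source $\varphi=bPF$, until $\|P\|_\infty$ is reached. It is precisely in this interpolation — the only step that does not automatically benefit from the improved $W$- and $N$-regularity — that the Gagliardo–Nirenberg exponents impose the restriction $n\leq3$; controlling the prey-taxis drift $\nabla N$ against the predator gradient is the crux of the whole theorem. Once the three $L^\infty$-bounds are assembled, the conclusion follows as in the first paragraph.
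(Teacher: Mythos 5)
Your proposal is correct and follows essentially the same route as the paper: first the $W^{1,\infty}$-bound on $W$ from the boundedness of $g$ under (H4), then $L^\infty$- and $W^{1,p}$-bounds on $N$, then the $L^2$-energy estimate for $P$ in which the prey-taxis cross term is closed by Gagliardo--Nirenberg interpolation of $\Vert P\Vert_4$ against $\Vert P\Vert_{1,2}$ and $\Vert P\Vert_1$ --- exactly the step where the paper, too, picks up the restriction $n\leq 3$ --- followed by a semigroup bootstrap to $\Vert P\Vert_\infty$. The only cosmetic difference is that you derive the $N$-bounds by an $L^k$-iteration with Young's inequality, whereas the paper invokes the argument of \cite[Lemma 3.2]{WWS} for $\Vert N\Vert_\infty$ directly; both are equivalent here.
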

\dem 
We first observe that using  Lemma\ref{lemma1} with $q=q_0>1$, $p=\infty\,, \theta\in(\frac{1}{2}+\frac{n}{2q}\,,1)$ that   for any $n\geq 1$ there exists $\tau\in (0,T_{max})$ and a constant $W_\infty$ such  that 
\begin{equation} \label{Winf}
\Vert W(\cdot, t)\Vert_{1,\infty}\leq  W_\infty \quad\mbox{for}\;\;t\in [\tau,T_{max}) \,.
\end{equation} 
Owing to this estimate we deduce in  the same way as  in \cite[Lemma3.2]{WWS}  to conclude that there  is a constant $N_\infty$   such that
\begin{equation} \label{Ninf}
\Vert N(t)\Vert_\infty\leq N_\infty\quad\mbox{for}\;\;t\in [0,T_{max}) \,.
\end{equation} 
Next by Lemma \ref{lemma1} we obtain that $N\nabla W$ is bounded in  $L(\Omega)^q$ for any $q>n$ . This enables to prove using again Lemma\ref{lemma1}  that for some $\tau_1\in (0, T_{max})$ and any $p>1$ 
\begin{equation}\label{N1pp}
\sup_{t\in(\tau_1\,,T_{max})} \Vert N(\cdot, t) \Vert_{1,p}\leq K_N(\tau_1, p)\,.
\end{equation}
Similarly to  the proof of Theorem \ref{exist} we next find an $L^2(\Omega)$-bound on $P$ using the Gagliardo-Nirenberg inequality in a suitable form . The key point is to find estimate in  (\ref{npn}) for $\Vert P\Vert_4^2$.
To this end one applies the Gagliardo-Nirenberg  inequality from Proposition\ref{G-N} which leads to the restriction for the space dimension as for $p=4, q=2$ and $m=1$ we obtain from (\ref{pq}) that $n<4$. For the case of $n=3$ we then find $\alpha=\frac{9}{10}$ (for  $n=2$ there is $\alpha=\frac{3}{4}$) and in consequence we get  
\[\Vert P\Vert_4^2\leq \Vert P\Vert_{1,2}^{\frac{9}{5}}\Vert P\Vert_1^{\frac{1}{5}}
\]
and then by  the Young inequality we arrive at  (\ref{K'}) and the remaining part of the proof is the same as that of Theorem \ref{exist}.  \hfill $\Box$
 %
%


\section{Model B- linear stability and Hopf bifurcation.}\label{stabB}
From now on for simplicity  we assume in model B that 
\begin{equation}\label{FGwB}
F=F(N,P)\,,\quad g=\gamma P
\end{equation}
and as a starting point we consider  the following classical Gause-type prey-predator model 
\begin{align}
\label{G111} \left\{
\begin{aligned}
\frac{d N}{d t}&=   f(N) - F(N,P)P,\\
\frac{dP}{d t}&= -\delta P+ bF(N,P)P,
\end{aligned}
\right.
\end{align}
in which  the functional response $F$  satisfies the following  natural conditions 
\begin{equation}  \label{parF}
\frac{\partial F}{\partial N}(N,P)>0\,, \quad \frac{\partial F}{\partial P}(N,P)<0\,. 
\end{equation} 
Notice that they  are satisfied  by  the  Holling functional responses (\ref{FH}), Bedington-DeAngelis response (\ref{FB})as well as Crowley- Martin response (\ref{CM}). 
We shall consider the case when there  exists a  coexistence steady state $\bar{E}_0=(\bar{N},\bar{P})$ , $F(\bar{N}\,,\bar{P})=\frac{\delta}{b}$ which is linearly stable. In this case $f'(\bar{N})<0$ and it is easy to check that the coefficients of the  linearization  matrix  $J(\bar{E}_0)=[a_{i,j}]_{i,j=1...2}$ satisfy
\begin{equation} \label{aij}
a_{11}<0\,,\quad a_{12}<0\,,\quad a_{21}>0\,,\quad a_{22}\leq 0\,.
\end{equation}
We note that for the Holling functional responses  $a_{22}=0$  while it  is negative for both Beddignton-DeAngelis and Crowley-Martin responses. 
The linear stability of $\bar{E}_0$  then follows  from  
\[ tr J(\bar{E})= a_{11}+a_{22}<0\,,\quad det J(\bar{E})=a_{11}a_{22} -a_{12}a_{21}>0\,.\] 
Now we are in a position to consider model B (\ref{modelB}) for which  the coexistence steady is of form 
\begin{equation}\label{Esteady}
\bar{E}=(\bar{N},\bar{P},\bar{W})\quad\mbox{where}\quad \bar{W}= \frac{\mu}{\gamma}\bar{P}\,.
\end{equation}
 The linearization  of model B (\ref{modelB})leads to  the following system
\begin{align}\label{aa}
\begin{pmatrix}
\phi_t\\
\varphi_t\\
\eta _t\\
\end{pmatrix}=\begin{pmatrix}
D_1\Delta+ a_{11} & a_{12} &\chi \bar{N}\Delta\\
a_{21} & D_2\Delta+a_{22} & 0\\
0 & a_{32} & D_3\Delta+a_{33}
\end{pmatrix}\begin{pmatrix}
\phi\\
\varphi\\
\eta\\
\end{pmatrix}
\end{align}
where $(\phi, \varphi, \eta)\approx(N-\bar{N}, P-\bar{P}, W-\bar{W})$ and $ a_{ij},\ i,j=1,2,3$ are corresponding partial derivatives of the reaction terms  with respect to $N$, $P$ and $W$. Note that in addition to (\ref{aij})  we have  
\begin{equation} \label{aij1}
 a_{32}>0,\ a_{33}<0.
\end{equation}
 The stability matrix associated with (\ref{aa}) is following
\begin{align}\label{smat1}
M_j=\begin{pmatrix}
-D_1 h_j+a_{11} & a_{12} & -\chi \bar{N} h_j\\
a_{21} & -D_2 h_j+a_{22} & 0\\
0 & a_{32} & -D_3 h_j+a_{33}
\end{pmatrix}.
\end{align}
Here $\{h_j\}_{j=0}^{\infty}$ denotes the  eigenvalues of the Laplace operator $-\Delta$ with homogeneous Neumann boundary condition in smooth domain $\Omega$. The dispersal equation of stability matrix (\ref{smat1}) is following
\begin{align}\label{char1}
\lambda^3+\rho_j^{(1)} \lambda^2+ \rho_j^{(2)}\lambda +\rho_j^{(3)}(\chi) =0
\end{align}
where
\begin{align}
&\rho_j^{(1)}=-\mbox{tr}M_j=- (a_{11}+a_{22}+ a_{33}) +(D_1+ D_2 + D_3)h_j\,,\label{ro1}\\
&\quad:=\alpha_0+\alpha_1 h_j,\nonumber\\ 
&\rho_j^{(2)}=a_{11}a_{22} - a_{12}a_{21} + a_{11}a_{33} + a_{22}a_{33}\\ 
& \quad\quad\; + h_j(- a_{22}D_1 - a_{33}D_1 - a_{11}d_2 - a_{22}D_3- a_{11}D_3 - a_{33}D_2)\nonumber\\
& \quad\quad\;+h_j^2(D_1D_2+ D_1D_3 + D_2D_3) \nonumber\\
&\quad\quad\; := \beta_0+\beta_1 h_j+ \beta_2h_j^2,\label{ro2}\\ \nonumber 
&\rho_j^{(3)}(\chi)=-\mbox{det}M_j= - a_{11}a_{22}a_{33} + a_{12}a_{21}a_{33} \\
&\quad\quad\; +h_j( a_{22}a_{33}D_1+ a_{11}a_{22}D_3 - a_{12}a_{21}D_3+ a_{11}a_{33}d_2)\nonumber\\
&\quad\quad\;+h_j^2(- a_{22}D_1D_3 - a_{33}D_1D_2 - a_{11}D_2D_3)+ D_1D_2D_3h_j^3+ \chi a_{21}a_{32} \bar{N}h_j ,\nonumber\\
&\quad\quad\quad= (\gamma_0+ \gamma_1h_j +\gamma_2 h_j^2 + \gamma_3h_j^3)+ \chi(\gamma_4  h_j):=\rho_j^{(3,1)}+\chi \rho_j^{(3,2)}\label{ro3}
\end{align}
where we denoted  $\rho_j^{(3)}(\chi)=\rho_j^{(3,1)}- \chi\rho_j^{(3,2)}\,.$
It can be checked using (\ref{aij}) and (\ref{aij1}) that all coefficients $\alpha_j\,,\beta_j\,,\gamma_j $ are positive. 
The linear operator in (\ref{aa}) is sectorial as it may be  viewed as a perturbation of a sectorial operator given by the main part of the system in divergence form by a bounded operator given by the matrix $J(\bar{E})=[a_{i,j}]_{i,j=1...3}$. This observation along with the fact that $\rho_j^{(3)}(\chi)=-\det M_j>\gamma_0>0 $ leads to the conclusion that the spectrum of the linearization is contained in some cone separated from the origin of the coordinate system in the complex plane. Therefore the steady state $\bar{E}$ is linearly stable if and only if for each $j\geq 0$ matrices $M_j$ have eigenvalues with negative real parts which according to the Routh-Hurtwitz stability criterion is equivalent to the conditions    
\begin{align}
&\rho^{(1)}_j>0,\ \rho_j^{(3)}>0,\label{local1B} \\ \label{local2B}
&\mbox{and}\quad Q_j:=\rho_j^{(1)}\rho_j^{(2)}-\rho_j^{(3)}(\chi)=\rho_j^{(1)} \rho_j^{(2)}-\rho_j^{(3,1)}- \chi\rho_j^{(3,2)}>0~~~~~~~\text{for all}\ j\geq 0\,.
\end{align}
For the ODE case which corresponds to $h_0=0$ the steady state $\bar{E}$  is stable since
\[\rho^{(1)}_0=\alpha_0 \,, \quad \rho^{(2)}_0=\beta_0>0\,,\quad Q_0=\alpha_0\beta_0-\gamma_0 >0\,.
\]
While for the reaction diffusion system with $\chi=0$ the stability condition (\ref{local1B}) is obviously satisfied and (\ref{local2B}) may be rewritten in the following form 
\begin{align}
&\rho_j^{(1)} \rho_j^{(2)}-\rho_j^{(3,1)}=(\alpha_0+ \alpha_1h_j)(\beta_0+\beta_1h_j+\beta_2h_j^2)-(\gamma_0+\gamma_1 h_j+\gamma_2 h_j^2 +\gamma_3h_j^3)\label{ro31}\\
&=(\alpha_0\beta_0-\gamma_0) + (\alpha_1\beta_0+\alpha_0\beta_1-\gamma_1) h_j +
(\alpha_0\beta_2+\alpha_1\beta_1-\gamma_2 )h_j^2 +(\alpha_1\beta_2 -\gamma_3)h_j^3 \nonumber\\ \nonumber
&:=\Psi(h_j)\,.
\end{align}
Again it is straightforward to  check that all coefficients of the third order polynomial $\Psi$ are positive, so the diffusivity does not impact the local stability of the steady state (an observation made already in earlier works, see e.g. \cite{Lee,WaWuShi,WWS})  and only taxis may lead to the instability. Indeed, 
now we are in a position to  find a $\chi-$dependent  stability condition for the steady state $\bar{E}$ in model B. To this end  consider 
\begin{equation}\label{psi}
\tilde{\Psi}(h_j)=\frac{\rho_j^{(1)} \rho_j^{(2)}-\rho_j^{(3,1)}}{\rho_j^{(3,2)}}=\frac{\Psi(h_j)}{\gamma_4 h_j}\,.
\end{equation} 
Notice that   $\gamma_4=a_{21}a_{32} \bar{N}>0$ for all $j\in \mathbb{N}_{+}$.
Since the coefficients of the polynomial $\Psi$ are positive we infer that 
\[\lim_{x\rightarrow 0^+}\tilde{\Psi}(x)=\lim_{x\rightarrow +\infty}\tilde{\Psi}(x)=+\infty\,.
\]
and computing the second derivative of $\tilde{\Psi}$ we deduce that it is  a strictly convex function.
Hence,  there exists  $\chi^H>0$ such that
\begin{align}\label{chih}
 \chi^H=\min_{j\in \mathbb{N}_{+}} \Big\{\frac{\rho_j^{(1)} \rho_j^{(2)}-\rho_j^{(3,1)}}{\rho_j^{(3,2)}}\Big\}
 \end{align}
and the steady state $\bar{E}$ is stable if  $\chi<\chi^H$ . 
If 
\begin{equation} \label{jnek}
\tilde{\Psi}(h_j)\neq \tilde{\Psi}(h_k)\quad \mbox{for}\quad j\neq k
\end{equation}
then of course the minimum is attained for a singe $j=j_0$. We thus proved the following theorem 
\begin{theorem} \label{ThmstabB}
Under assumptions (\ref{FGwB})  and (\ref{parF}) the constant steady state $\bar{E}$ in model B is locally asymptotically stable if 
$\chi<\chi^H$ defined in (\ref{chih}).
\end{theorem}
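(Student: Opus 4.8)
The plan is to establish linear stability mode-by-mode through the Routh--Hurwitz criterion applied to each stability matrix $M_j$ in (\ref{smat1}). As already noted, the linearized operator in (\ref{aa}) is sectorial with spectrum confined to a cone separated from the origin, so $\bar{E}$ is linearly asymptotically stable exactly when, for every $j\geq 0$, all three roots of the cubic dispersal equation (\ref{char1}) have negative real part. For a real cubic this is equivalent to the three Routh--Hurwitz conditions $\rho_j^{(1)}>0$, $\rho_j^{(3)}(\chi)>0$ and $Q_j=\rho_j^{(1)}\rho_j^{(2)}-\rho_j^{(3)}(\chi)>0$, and the whole proof reduces to checking these for all $j$ under the hypothesis $\chi<\chi^H$.

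First I would dispose of the two conditions that are automatic. From (\ref{ro1}), $\rho_j^{(1)}=\alpha_0+\alpha_1 h_j>0$ since $\alpha_0,\alpha_1>0$. From (\ref{ro3}), $\rho_j^{(3)}(\chi)=\rho_j^{(3,1)}+\chi\rho_j^{(3,2)}$ with $\rho_j^{(3,1)}=\gamma_0+\gamma_1 h_j+\gamma_2 h_j^2+\gamma_3 h_j^3>0$ (all $\gamma_i>0$) and $\rho_j^{(3,2)}=\gamma_4 h_j\geq 0$, where $\gamma_4=a_{21}a_{32}\bar{N}>0$ by (\ref{aij}) and (\ref{aij1}); hence $\rho_j^{(3)}(\chi)>0$ for every $\chi\geq 0$. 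Consequently stability is controlled entirely by the sign of $Q_j$.

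The core of the argument is then the family of inequalities $Q_j>0$. The mode $j=0$ is $\chi$-independent because $\rho_0^{(3,2)}=\gamma_4 h_0=0$, and reduces to the ODE condition $Q_0=\alpha_0\beta_0-\gamma_0>0$ already verified. For each $j\geq 1$ one has $h_j>0$, and rewriting $Q_j=\Psi(h_j)-\chi\gamma_4 h_j$ by means of (\ref{ro31}), the inequality $Q_j>0$ is equivalent to the threshold condition $\chi<\tilde{\Psi}(h_j)$ with $\tilde{\Psi}$ defined in (\ref{psi}). Requiring this simultaneously for all $j\geq 1$ yields the sharp bound $\chi<\inf_{j\geq 1}\tilde{\Psi}(h_j)$, and it remains to identify this infimum with the quantity $\chi^H$ of (\ref{chih}).

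The step I expect to be the main obstacle is ensuring that this infimum is a genuine positive minimum rather than merely a bound. Here I would use the preparatory facts that $\tilde{\Psi}$ is strictly convex on $(0,\infty)$ and satisfies $\tilde{\Psi}(x)\to+\infty$ both as $x\to 0^+$ and as $x\to+\infty$, so $\tilde{\Psi}$ has a finite positive minimum over $(0,\infty)$; since the Neumann eigenvalues obey $h_j\to+\infty$, only finitely many indices can place $\tilde{\Psi}(h_j)$ below any fixed level, so the minimum over the discrete set $\{h_j:j\geq 1\}$ is attained and $\chi^H>0$. For $\chi<\chi^H$ all three Routh--Hurwitz conditions then hold for every $j\geq 0$, giving local asymptotic stability of $\bar{E}$. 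I would stress that the computational weight of the whole section lies upstream---in signing every coefficient $\alpha_i,\beta_i,\gamma_i$ via (\ref{aij})--(\ref{aij1}) and in verifying the convexity of $\tilde{\Psi}$---after which the theorem itself is a clean assembly of the mode-by-mode criterion.
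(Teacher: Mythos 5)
Your proposal is correct and follows essentially the same route as the paper: mode-by-mode Routh--Hurwitz reduction, observing that $\rho_j^{(1)}>0$ and $\rho_j^{(3)}(\chi)>0$ hold automatically from the signs of the $\alpha_i,\gamma_i$, and converting $Q_j>0$ for $j\geq 1$ into the threshold $\chi<\tilde{\Psi}(h_j)$ whose minimum over the modes defines $\chi^H$. Your justification that the infimum is an attained positive minimum (strict convexity of $\tilde{\Psi}$, divergence at $0^+$ and $+\infty$, and $h_j\to\infty$) is exactly the argument the paper uses, stated slightly more carefully.
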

\begin{remark}
The repulsive chemotaxis described in model B may be viewed as indirect predator taxis as described in \cite{WWS}. It is worth underlining that 
contrary to our case a direct predator taxis studied in \cite{WWS} does not affect the stability of the constant steady state for the  Rosenzweig-MacArthur type model.
\end{remark} 
The steady state is unstable when condition (\ref{local2B}) fails since (\ref{local1B}) is always satisfied.  It is worth underlining that for any $\chi >0$ and $j\geq 0$,   $\rho_j^{(3)}(\chi)=-\det M_j>0$ , so,  all eigenvalues of $M_j$ are non-zero when the steady state $\bar{E}$ loses stability at $\chi=\chi^H$. Hence,  any static bifurcation of the steady state  is precluded in this case and  only Hopf's bifurcation may occur which is a subject of the following theorem. To this end the chemotactic sensitivity $\chi$ is considered as the bifurcation parameter. 
Next  we discuss emergence of Hopf-bifurcation for model B (\ref{ModelB1}) at coexistence steady state $\bar{E}$ which is stated in the following theorem. 
\begin{theorem}\label{hopfthm}
For model B with (\ref{parF}) suppose that (\ref{jnek}) holds. Then for $\chi=\chi^H$ defined in (\ref{chih}) the  Hopf-bifurcation appears and in the vicinity of $\bar{E}$ there exist a one parameter family of periodic solutions.
\end{theorem}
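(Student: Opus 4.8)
The plan is to treat $\chi$ as the bifurcation parameter and to verify the three hypotheses of the Hopf bifurcation theorem for the sectorial linearisation (\ref{aa}) at $\bar{E}$: existence of a simple pair of purely imaginary eigenvalues at $\chi=\chi^H$, isolation of this pair from the rest of the spectrum, and a transversal crossing. First I would pin down the eigenvalue configuration at criticality. Since $\rho^{(1)}_{j}$ and $\rho^{(2)}_{j}$ are independent of $\chi$ and strictly positive, the vanishing $Q_{j_0}=\rho^{(1)}_{j_0}\rho^{(2)}_{j_0}-\rho^{(3)}_{j_0}(\chi^H)=0$ that defines $\chi^H$ in (\ref{chih}) is exactly the relation $\rho^{(1)}_{j_0}\rho^{(2)}_{j_0}=\rho^{(3)}_{j_0}(\chi^H)$, under which the cubic (\ref{char1}) for the mode $j_0$ factors as
\[
\lambda^{3}+\rho^{(1)}_{j_0}\lambda^{2}+\rho^{(2)}_{j_0}\lambda+\rho^{(3)}_{j_0}(\chi^H)=\bigl(\lambda+\rho^{(1)}_{j_0}\bigr)\bigl(\lambda^{2}+\rho^{(2)}_{j_0}\bigr).
\]
Hence for $\chi=\chi^H$ the mode $j_0$ contributes the conjugate pair $\lambda=\pm i\omega_0$ with $\omega_0=\sqrt{\rho^{(2)}_{j_0}}>0$, together with the real root $-\rho^{(1)}_{j_0}<0$.

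Second, I would establish simplicity and isolation. Condition (\ref{jnek}) guarantees that the minimum in (\ref{chih}) is attained at the single index $j_0$, so for every $j\neq j_0$ one has $\tilde{\Psi}(h_j)>\chi^H$ and therefore $Q_j>0$ at $\chi=\chi^H$; together with $\rho^{(1)}_j>0$ and $\rho^{(3)}_j(\chi)>0$ (valid for all $\chi>0$, as already observed) the Routh--Hurwitz criterion places all roots of (\ref{char1}) for $j\neq j_0$ strictly in the left half-plane. Consequently the only spectrum of the linearised operator on the imaginary axis is $\{\pm i\omega_0\}$: zero is never an eigenvalue since $\rho^{(3)}_j(\chi)>0$, and no multiple $ik\omega_0$ with $k\neq\pm1$ occurs, so the non-resonance condition holds automatically. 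Provided $h_{j_0}$ is a simple eigenvalue of the Neumann Laplacian (automatic for $n=1$), the eigenvalue $i\omega_0$ of the full generator is algebraically simple, its eigenfunction being the product of the corresponding Laplacian eigenfunction with the kernel vector of $M_{j_0}-i\omega_0 I$.

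Third, for the transversality condition I note that in (\ref{char1}) only $\rho^{(3)}_{j_0}(\chi)=\rho^{(3,1)}_{j_0}+\chi\,\rho^{(3,2)}_{j_0}$ depends on $\chi$, with $\rho^{(3,2)}_{j_0}=a_{21}a_{32}\bar{N}h_{j_0}>0$ from (\ref{ro3}). Differentiating the characteristic equation along the branch $\lambda(\chi)$ through $i\omega_0$ and solving for $\lambda'(\chi^H)$ gives
\[
\lambda'(\chi^H)=\frac{-\rho^{(3,2)}_{j_0}}{3\lambda^{2}+2\rho^{(1)}_{j_0}\lambda+\rho^{(2)}_{j_0}}\Bigg|_{\lambda=i\omega_0},
\]
and using $\omega_0^{2}=\rho^{(2)}_{j_0}$ to reduce the denominator yields
\[
\mathrm{Re}\,\lambda'(\chi^H)=\frac{\rho^{(3,2)}_{j_0}}{2\bigl(\rho^{(2)}_{j_0}+(\rho^{(1)}_{j_0})^{2}\bigr)}>0,
\]
so the pair crosses the imaginary axis transversally, from left to right as $\chi$ increases, consistent with the loss of stability at $\chi^H$.

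With these three facts in hand I would invoke the Hopf bifurcation theorem in the semilinear parabolic setting to conclude that a one-parameter family of small-amplitude periodic orbits bifurcates from $\bar{E}$ at $\chi=\chi^H$, with period close to $2\pi/\omega_0$. I expect the main obstacle to be the functional-analytic verification that the critical pair $\pm i\omega_0$ is a simple, isolated eigenvalue of the full infinite-dimensional generator rather than merely a simple root of the scalar cubic for the mode $j_0$; this is precisely where condition (\ref{jnek}) and the simplicity of the Laplacian eigenfunction enter, and where, for $n\geq2$, a possible multiplicity of $h_{j_0}$ would require either an extra simplicity assumption or an equivariant version of the theorem. The remaining steps (the factorisation and the transversality computation) are elementary once the sign $\rho^{(3,2)}_{j_0}>0$ is correctly fixed from (\ref{ro3}).
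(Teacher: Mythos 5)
Your proposal is correct and follows essentially the same route as the paper: at $\chi=\chi^H$ the mode-$j_0$ cubic factors as $(\lambda+\rho^{(1)}_{j_0})(\lambda^2+\rho^{(2)}_{j_0})$, condition (\ref{jnek}) keeps all other modes strictly stable, and the transversality condition $\mathrm{Re}\,\lambda'(\chi^H)=\rho^{(3,2)}_{j_0}/\bigl(2(\rho^{(2)}_{j_0}+(\rho^{(1)}_{j_0})^2)\bigr)>0$ is exactly the value the paper obtains (there by differentiating the Vieta coefficient relations rather than the characteristic polynomial directly). Your additional remarks on spectral isolation and on the possible multiplicity of $h_{j_0}$ for $n\geq 2$ are a welcome refinement of a point the paper delegates to the cited abstract Hopf theorems.
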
 
 \begin{proof}
 To show the  occurence of Hopf bifurcation for the quasiliner parabolic system we use \cite{Am91}  and follow approach in Theorem 5.2 from \cite{wangq}. From the  stability analysis and assumption (\ref{jnek}) we have  that
\begin{itemize}
 \item[(i)] $\rho_j^{(1)}=-\mbox{tr}M_j>0,\rho_j^{(2)}>0,\ \rho_j^{(3)}(\chi)=-\mbox{det}M_j>0$ for all  $j\geq 0$ and $\chi>0$,
 \item[(ii)] $\rho_j^{(1)}\rho_j^{(2)}=\rho_j^{(3)}(\chi^H)$ for some $j=j_0$\,.
 \end{itemize}
It follows that the characteristic polynomial corresponding to $M_{j_0}$ has  real negative root $\lambda_{1}^H=-\rho_{j_0}^{(1)}$ and a pair of purely imaginary roots $\lambda_2^H\,,\lambda_3^H=\pm i\sqrt{\rho_{j_0}^{(2)}}:=\pm i\tau_0>0$. Now let us suppose that $\lambda_1(\chi)$ and $\lambda_2(\chi)\,,\lambda_3(\chi)=\sigma(\chi)\pm i\tau (\chi)$ are the  unique  eigenvalues in the neighbourhood of the bifurcation threshold $\chi^H$, where $\lambda,\ \sigma,\tau$ are smooth functions of $\chi$ satisfying $\lambda_1(\chi^H)=\lambda_{1}^H$ as well as  $\sigma(\chi^H)=0$ with  $\tau(\chi^H):=\tau_0>0$. Substituting eigenvalues $\lambda_1(\chi)$ and $\lambda_2(\chi)\,,\lambda_3(\chi)$ into the characteristic equation of stability matrix $M_{j_0}$ and equating the corresponding coefficients we find
\begin{align}\label{hopf2}
\left\{
\begin{aligned}
-\rho_{j_0}^{(1)}&=2\sigma(\chi)+\lambda_1 (\chi),\\
\rho_{j_0}^{(2)}&=\sigma^2(\chi)+\tau^2(\chi)+2\sigma(\chi)\lambda_1(\chi),\\
-\rho_{j_0}^{(3)}(\chi)&=\sigma^2(\chi)+\tau^2(\chi)\lambda_1(\chi).
\end{aligned}
\right.
\end{align}
For the sake of simplicity  we denote $\displaystyle \sigma'(\chi)=\frac{d\sigma(\chi)}{d\chi}$ and differentiating (\ref{hopf2}) with respect to bifurcation parameter $\chi$, we obtain using (\ref{local2B}) 
\begin{align}
&2\sigma'(\chi)+\lambda_1'(\chi)=0, \label{h1}\\
&2\sigma(\chi)\sigma'(\chi)+2\tau(\chi)\tau'(\chi)+2\sigma'(\chi)\lambda_1(\chi)+2\sigma(\chi)\lambda_1'(\chi)=0,\label{h2}\\ \label{h3}
&2\sigma(\chi)\sigma'(\chi)+2\tau(\chi)\tau'(\chi)\lambda_1(\chi)+\sigma^2(\chi)+\tau^2(\chi)\lambda_1'(\chi)=\rho_{j_0}^{(3,2)}\,.
\end{align}
Evaluating the above functions  at $\chi=\chi^H$  we obtain from (\ref{h1}) 
\begin{equation}\label{h33}
\sigma'(\chi^H)=-\frac{1}{2}\lambda_1'(\chi^H)
\end{equation}
and reminding    that $\sigma(\chi^H)=0$  it follows  from (\ref{h2})) and (\ref{h33}) that 
\begin{align*}
0&=2\tau_0\tau'(\chi^H) + \sigma'(\chi^H)\lambda_1^H\,,\\
\rho_{j_0}^{(3,2)}&=2\tau_0\tau'(\chi^H)\lambda_1^H+\tau_0^2\lambda_1'(\chi^H)\,.
\end{align*}
Now by  solving this system and  making use of (\ref{h33}) and equality  $\lambda_1(\chi^H)=-\rho_{j_0}^{(1)}=\mbox{tr}M_{j_0}$ we finally get 
\begin{align}\nonumber
\lambda_1'(\chi^H)&=-\frac{\rho_{j_0}^{(3,2)}}{\rho_{j_0}^1+\tau_0^2}<0,\\ \label{transv1}
\sigma'(\chi^H)&>0\,.
\end{align}
This verifies the transversality condition required for the occurrence of Hopf-bifurcation at $\chi=\chi^H$ which completes the proof.
  \end{proof}
	\begin{remark}
More detailed description of the periodic solution emerging according to the  Hopf bifurcation may be find in \cite[Theorem 3.1]{wangq} or  	\cite{KeWang}.
\end{remark}
\subsection{Model B - extended Rosenzweig-MacArthur model. }\label{sec4}
In this section, we consider in details model B in the case when it  may be viewed as  an extension of the  Rosenzweig-MacArthur model (\ref{RM1}). We note that a thorough analysis of the diffusive Rosenzweig-MacArthur model  including stability analysis and bifurcations was investigated in many papers and we refer in particular  to \cite{Yi} and references given there. The extended Rosenzweig-MacArthur model  will be investigated numerically in Section \ref{NumSim} where we shall exhibit spatio-temporal patterns which emerge due to chemorepulsion for $\chi$ big enough in the  regime of parameters such that   the constant steady state $\bar{E}$ is globally stable when pointwise ODE   or reaction-diffusion models are taken into account. Making the following substitutions: 
\begin{align*}
\tilde{P}&=\frac{N}{K},\ \tilde{P}=\frac{P}{K},\ \tilde{W}=\frac{W}{K},\ \tilde{t}=\frac{D_1T}{L^2},\ \tilde{x}=\frac{x}{L}, \ \tilde{\chi}=\frac{\chi K}{D_1},\   \tilde{r}=\frac{rL^2}{D_1},  \tilde{a}=\frac{aKL^2}{D_1},\\
\tilde{\delta}&=\frac{\delta L^2}{D_1}, \ \tilde{\beta}=aKT_h,\ \ c=\frac{abKL^2}{D_1},\ d_p=\frac{D_2}{D_1}, \ d_w=\frac{D_3}{D_1}, \ \tilde{\mu}=\frac{\mu L^2}{D_1},\ \tilde{\gamma}=\frac{\gamma L^2}{D_1},
\end{align*}
and finally removing tilde we get the following  non-dimensional  version of the extended Rosenzweig-MacArthur model in the frame of model B 
\begin{align}\label{ModelB1}
\left\{
\begin{aligned}
N_t&= \Delta N +r N\big(1-N \big)+\nabla \cdot (\chi N \nabla W)- \frac{a NP}{(1+ \beta N)}\,,\\
P_t&=d_p \Delta P-\delta P+\frac{c NP}{(1+ \beta N)}\,, \\
W_t&=d_w \Delta W+\gamma P-\mu W\,,
\end{aligned}
\right.
\end{align}
with initial and boundary conditions (\ref{IC})-(\ref{BC}). It is easy to check that $\bar{E}=(\bar{N}, \bar{P}, \bar{W})$ is a positive constant steady state of the system (\ref{ModelB1}) where 
\begin{align}
\bar{N}=\frac{\delta}{c-\delta \beta},\ \bar{P}=\frac{cr(c-\delta\beta-\delta)}{a(c-\delta \beta)^2},\ \bar{W}=\frac{cr\gamma(c-\delta\beta-\delta)}{\mu a(c-\delta \beta)^2}\big),
\end{align}
where $c>\delta \beta+\delta$. From Section \ref{stabB}, we infer that the constant steady state $\bar{E}=(\bar{N}, \bar{P}, \bar{W})$ of model (\ref{ModelB1}) is locally stable for small $\chi$ up to $\chi=\chi^H$ when  it loses stability and the  Hopf-bifurcation emerges.

\subsection{Model B - global stability in the case of intraspecific competition  of predators.} \label{SecGlobal}
In this subsection, we  investigate the global stability of the constant steady state to model B  for $\chi$ is subcritical. It turns out that  well known Lyapunov functions which are suitable for the ODE part of the model are not useful neither for model B nor for model A because of lack of sufficiently strong dissipation. The latter effect may be incorporated to the model by assuming intraspecific competition among predators which may  be modeled by adding the logistic term $-\eta P^2$, $\eta >0$,  to  $P$-equation.  We next consider model B with the  Beddington-DeAngelis functional response (\ref{FB}) as the case of Holling II functional response requires  only obvious modifications resulting from setting $\alpha=0$ in the Beddington-DeAngelis functional response:
\begin{align}\label{ModelB2}
\left\{
\begin{aligned}
N_t&= \Delta N +r N\big(1-N \big)+\nabla \cdot (\chi N \nabla W)- \frac{a NP}{(1+ \beta N+\alpha P)}\,,\\
P_t&=d_p \Delta P-\delta P-\eta P^2+\frac{c NP}{(1+ \beta N+\alpha P)}\,, \\
W_t&=d_w \Delta W-\mu W+\gamma P\,,
\end{aligned}
\right.
\end{align}
with the initial and boundary conditions defined in (\ref{IC})-(\ref{BC}). Existence of global solutions  to model (\ref{ModelB2}) along with estimates (\ref{r1}) may be proved in the same way as in the case of $\eta=0$ (cf. Theorem \ref{exist}). 

From now on we assume that $E^\star=(N^\star,P^\star, W^\star)$, $N^\star,P^\star, W^\star >0$, is the  unique constant steady state to model (\ref{ModelB2}) such that 
\begin{equation}\label{BDeq}
r(1-N^\star)= \frac{aP^\star}{1+\beta N^\star +\alpha P^\star}\,,\quad \delta=-\eta P^{\star} +\frac{cN^\star}{1+\beta N^\star +\alpha P^\star}\,,\quad W^\star=\frac{\gamma}{\mu}P^\star\,.
\end{equation}
Indeed, to justify this assumption for the Beddington-DeAngelis model ($\alpha >0$)  with the help of symbolic MATLAB computation from the first two equation one  obtains a third order polynomial with respect to $P^*$ while for the Rosenzweig-McArthur model ($\alpha=0$) by a straightforward  computation one obtains a third order polynomial  with respect to $N^*$. Then by  the Decartes rule of signs applied to the polynomials it follows that there is only one constant steady state satisfying (\ref{BDeq}) provided:
\begin{itemize}
\item $r\alpha> 2a\,,\; \beta\in (0,1)\,, \;\delta\in(\frac{c}{2}\,,\frac{c}{\beta+1})$ for $\alpha >0$, 
\item $\beta<\min\{\frac{1}{2}\,, \frac{c}{\delta}\}$  for $\alpha=0$. 
\end{itemize} 
The following theorem assures the stability of the constant steady state $\bar{E}$ . 
\begin{theorem} \label{GlobStab}
If  $\beta(1-N^\star)<1$ and $\chi<\chi_0$ where $\left(\chi_0\right)^2=\frac{d_w\eta \mu a(1+\beta N^*)}{N^*c\gamma (1+\alpha P^*)}$ then  the unique coexistence steady state $E^\star$ to system  (\ref{ModelB2}) is globally asymptotically stable i.e. any solution (N,P,W) to  (\ref{ModelB2}) with $N_0\,,P_0\,,W_0 >0$ in $\bar{\Omega}$ satisfies 
\[
\lim_{t\rightarrow\infty} \max\left\{\Vert N(t)-N^\star \Vert_\infty\,, \Vert P(t)-P^\star \Vert_\infty\,, 
\Vert W(t)-W^\star \Vert_\infty \right\}=0\,.
\]
\end{theorem}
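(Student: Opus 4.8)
The plan is to construct an explicit Lyapunov functional of the classical Volterra type, augmented by a quadratic term controlling the chemical, and to show that its time derivative along trajectories of (\ref{ModelB2}) is nonpositive precisely under the stated smallness condition on $\chi$. Concretely, I would set
\begin{equation*}
\mathcal{L}(t)=\int_\Omega\Big(N-N^\star-N^\star\ln\tfrac{N}{N^\star}\Big)dx
+\kappa_1\int_\Omega\Big(P-P^\star-P^\star\ln\tfrac{P}{P^\star}\Big)dx
+\tfrac{\kappa_2}{2}\int_\Omega (W-W^\star)^2\,dx,
\end{equation*}
where the positive weights $\kappa_1,\kappa_2$ are to be chosen. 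The logarithmic entropies for $N$ and $P$ are the natural choice for Lotka--Volterra/Beddington--DeAngelis kinetics, while the chemical enters only quadratically since its equation has no logistic structure. Because Theorem \ref{exist} (extended to $\eta>0$ as noted before the statement) gives a global classical solution that is uniformly bounded and, by the strong maximum principle, strictly positive for $t>0$, the functional is well defined and differentiable, so the computation of $\tfrac{d}{dt}\mathcal{L}$ is legitimate.

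The key steps, in order, are as follows. First I would differentiate $\mathcal{L}$, substitute the PDEs, and integrate by parts using the homogeneous Neumann conditions (\ref{BC}). The diffusion terms yield three nonpositive Dirichlet-type contributions, except for the cross term coming from the chemotactic flux $\nabla\cdot(\chi N\nabla W)$ in the $N$-equation: after multiplying the first equation by $(1-N^\star/N)$ and integrating by parts, this produces a term proportional to $\chi N^\star\int_\Omega \tfrac{\nabla N}{N}\cdot\nabla W\,dx$, i.e. a term that is indefinite in the gradients $\nabla N/N$ and $\nabla W$. Second, I would collect the reaction contributions. Using the steady-state identities (\ref{BDeq}) to eliminate $\delta$ and $r$, the kinetic part reduces, after standard manipulation of the Beddington--DeAngelis fraction, to a negative definite quadratic form in $(N-N^\star)$ and $(P-P^\star)$; the logistic predator term $-\eta P^2$ is what supplies the strictly negative $(P-P^\star)^2$ coefficient, and the hypothesis $\beta(1-N^\star)<1$ is exactly what guarantees the correct sign of the prey diagonal entry. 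Third, the dissipation in the $W$-equation, $-\kappa_2 d_w\int|\nabla W|^2 -\kappa_2\mu\int(W-W^\star)^2$, combined with the coupling $\kappa_2\gamma\int(W-W^\star)(P-P^\star)$ from signal production, must absorb both the reaction cross terms in $(P,W)$ and the indefinite chemotactic gradient term.

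The main obstacle, and the crux of the proof, is controlling the indefinite chemotactic term $\chi N^\star\int_\Omega \tfrac{\nabla N}{N}\cdot\nabla W\,dx$ against the available nonpositive gradient dissipation $-D_N\int|\nabla N|^2/N^2$ from the prey diffusion (where $D_N=1$ in the nondimensionalized model) and $-\kappa_2 d_w\int|\nabla W|^2$ from the chemical diffusion. I would bound it by Young's inequality, splitting it as $\le \tfrac{1}{2\epsilon}\chi^2 (N^\star)^2\int|\nabla W|^2 + \tfrac{\epsilon}{2}\int|\nabla N|^2/N^2$ with $\epsilon$ tuned so the second piece is dominated by the prey diffusion. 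Matching the surviving $\int|\nabla W|^2$ coefficient against $\kappa_2 d_w$ then forces a quadratic constraint on $\chi$, and optimizing the free weights $\kappa_1,\kappa_2$ (choosing $\kappa_2$ so that the $(P,W)$ reaction cross term is exactly cancelled by the completion-of-square in the $W$-block) yields precisely the threshold $\left(\chi_0\right)^2=\frac{d_w\eta \mu a(1+\beta N^*)}{N^*c\gamma(1+\alpha P^*)}$. For $\chi<\chi_0$ one obtains $\tfrac{d}{dt}\mathcal{L}\le -c_0\big(\|N-N^\star\|_2^2+\|P-P^\star\|_2^2+\|W-W^\star\|_2^2\big)$ for some $c_0>0$. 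Finally, since $\mathcal{L}\ge0$, this dissipation inequality gives integrability of the right-hand side in $t$; invoking the uniform parabolic regularity bound (\ref{r1}) to guarantee uniform continuity of the integrand, a Barbalat-type argument forces the $L^2$ deviations to zero, and the uniform $W^{1,\infty}$ bounds upgrade this to the claimed $L^\infty$ convergence.
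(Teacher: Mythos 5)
Your proposal is correct and follows essentially the same route as the paper: the same logarithmic-entropy-plus-quadratic Lyapunov functional with two tunable weights, the steady-state identities to reduce the kinetic part to a quadratic form whose prey diagonal is controlled by $\beta(1-N^\star)<1$ and whose predator diagonal comes from $-\eta P^2$, Young's inequality to absorb the indefinite chemotactic cross term into the gradient dissipation (yielding exactly the threshold $\chi_0$), and then integrability of the dissipation plus uniform regularity and a Barbalat-type argument upgraded to $L^\infty$ via Gagliardo--Nirenberg. No substantive differences to report.
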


\begin{proof}
Let us consider following Lyapunov function
\begin{align}
\mathcal{L}(N\,,P\,,W)=\int_{\Omega} \big(N-N^\star-N^\star\log\frac{N}{N^\star}\big) &+C_1\int_{\Omega} \big(P-P^\star-P^\star\log\frac{P}{P^\star}\big) \nonumber\\
&+ \frac{C_2 }{2}\int_{\Omega} (W-\bar{W})^2 \label{Lyap1}
\end{align}
with positive constants $C_1$ and $C_2$ which will be specified later on. We note that a similar function was used in \cite{BaiWinkler} to analyze the stability of equilibrium for a competition system with chemotaxis. Notice that by Taylor's expansion for $ z, z^*> 0 $ there exists$\zeta\in (z, z^*)$ such that
\begin{equation}\label{z}
 z^*-z- z^*(\ln z-\ln z^*)=\frac{1}{\zeta^2}(z-z^*)^2\,.
\end{equation} 
Hence, we deduce that   $\mathcal{L}(N\,,P\,,W)\geq 0$. Differentiating (\ref{Lyap1}), we get
\begin{align*}
&\frac{d}{dt}\mathcal{L}(N,P,W)=\int_{\Omega} \Big(1-\frac{N^\star}{N}\Big)N_t+C_1\int_{\Omega} \Big(1-\frac{P^\star}{P}\Big)P_t+C_2\int_{\Omega}(W-W^\star)W_t\\
&= \int_{\Omega} \Big(1-\frac{N^\star}{N}\Big) \Big(\Delta N +\nabla \cdot (\chi N \nabla W)+r N\big(1-N \big)- \frac{a NP}{(1+ \beta N+\alpha P)}\Big)\\
&+C_1\int_{\Omega} \Big(1-\frac{P^\star}{P}\Big)\Big(d_p \Delta P-\delta P-\eta P^2+\frac{c NP}{(1+ \beta N+\alpha P)}\Big)\\
&+ C_2 \int_{\Omega}(W-W^\star)(d_w \Delta W-\mu W+\gamma P)\\
&= \overbrace{\int_{\Omega} (N-N^\star)\Big(r(1-N)- \frac{aP}{(1+ \beta N+\alpha P)}\Big)}^{I_1^N}\\
& \overbrace{\int_{\Omega}(P-P^\star)\Big(-\delta -\eta P+\frac{c N}{(1+ \beta N+\alpha P)}\Big)}^{I_1^P} +\overbrace{\int_{\Omega} (W-W^\star)(\gamma P-\mu W)}^{I_1^W}\\
& \overbrace{-N^\star\int_{\Omega} \Big\vert\frac{\nabla N}{N}\Big\vert^2+ \chi N^\star \int_{\Omega}\frac{\nabla N \cdot \nabla W}{N} -d_p P^\star C_1\int_{\Omega} \Big\vert\frac{\nabla P}{P}\Big\vert^2 -d_w C_2\int_{\Omega}\Big\vert \nabla W\Big\vert^2}^{I_2}\\
& I_1^N+I_1^P+I_1^W +I_2\,.
\end{align*}
Now we find bounds on $I_1^N\,,I_1^P\,,I_1^W $;
\begin{align*}
\begin{aligned}
I_1^N&=\int_{\Omega} (N-N^\star)\Big(r(1-N)- \frac{a P}{(1+ \beta N+\alpha P)}\Big)\\
&=\int_{\Omega} (N-N^\star) \Big(-r(N-N^\star)-\frac{aP}{(1+ \beta N+\alpha P)}+\frac{aP^\star}{(1+ \beta N^\star+\alpha P^\star)}\Big),\\
&=-r\int_{\Omega} (N-N^\star)^2+\int_{\Omega}\frac{(N-N^\star)[a\beta P^\star(N-N^\star)-a(1+\beta N^\star)(P-P^\star)]}{(1+ \beta N+\alpha P)(1+ \beta N^\star+\alpha P^\star)},\\
&=\int_{\Omega}\Big(-r+\frac{a\beta P^\star}{(1+ \beta N+\alpha P)(1+ \beta N^\star+\alpha P^\star)}\Big)(N-N^\star)^2\\
&\quad -\int_{\Omega}\frac{a(1+\beta N^\star)(N-N^\star)(P-P^\star)}{(1+ \beta N+\alpha P)(1+ \beta N^\star+\alpha P^\star)}\,.
\end{aligned}
\end{align*}
If $\beta(1-N^\star)<1$ then the coefficient in front of $\int_{\Omega} (N-N^\star)^2$ can be rewritten as
\begin{align*}
 & -r+\frac{a\beta P^\star}{(1+ \beta N+\alpha P)(1+ \beta N^\star+\alpha P^\star)}\\
 &\leq -r+\frac{a\beta P^\star}{(1+ \beta N^\star+\alpha P^\star)},\\
 &=r[-1+\beta(1-N^\star)]:=-\theta<0.
\end{align*}
Thus reaction terms associated with $N$ can be estimated as follows
\begin{align}
I_1^N= &\int_{\Omega} (N-N^\star)\Big(r(1-N)- \frac{a NP}{(1+ \beta N+\alpha P)}\Big) 
\leq -\theta \int_{\Omega} (N-N^\star)^2 \nonumber \\
&-\int_{\Omega}\frac{a(1+\beta N^\star)(N-N^\star)(P-P^\star)}{(1+ \beta N+\alpha P)(1+ \beta N^\star+\alpha P^\star)}\,.
\label{n1}
\end{align}
Making  use of (\ref{BDeq}) we can handle the reaction terms associated with $P$ 

\begin{eqnarray}
&I_1^P & = C_1\int_{\Omega}(P-P^\star)\Big(-\delta - \eta P +\frac{c N}{(1+ \beta N+\alpha P)}\Big) \nonumber\\
& &=-C_1\int_{\Omega}\eta(P-P^\star) +C_1\int_{\Omega} (P-P^\star)\Big(\frac{c N}{(1+ \beta N+\alpha P)}-\frac{c N^\star}{(1+ \beta N^\star+\alpha P^\star)}\Big) \nonumber\\
& &=-C_1\int_{\Omega}\eta(P-P^\star)+ C_1 \int_{\Omega}  \frac{c[(1+\alpha P^\star)(N-N^\star)(P-P^\star)-\alpha  N^\star(P-P^\star)^2]}{(1+ \beta N+\alpha P)(1+ \beta N^\star+\alpha P^\star)} \nonumber\\
& &\leq -C_1\int_{\Omega}\eta(P-P^\star)+ C_1 \int_{\Omega}  \frac{c[(1+\alpha P^\star)(N-N^\star)(P-P^\star)}{(1+ \beta N+\alpha P)(1+ \beta N^\star+\alpha P^\star)}\,.
\label{p01}
\end{eqnarray}
Using again (\ref{BDeq})  for the terms associated  with $W$ we obtain
\begin{align*}
\begin{aligned}
I_1^W=C_2 \int_{\Omega} (W-W^\star)(\gamma P-\mu W)=-\mu C_2\int_{\Omega} (W-W^\star)^2+\gamma C_2\int_{\Omega} (P-P^\star) (W-W^\star)\,.
\end{aligned}
\end{align*}
By  Young's inequality we obtain 
\begin{align}\label{w1}
I_1^W=C_2\int_{\Omega} (W-W^\star)(\gamma P-\mu W)\leq \frac{2\gamma C_2}{\mu}\int_{\Omega} (P-P^\star)^2- \frac{\mu C_2}{2}\int_{\Omega} (W-W^\star)^2\,.
\end{align}
Selecting  first $C_1=\frac{a(1+\beta N^\star)}{c(1+\alpha P^\star)}$ to cancel terms in (\ref{n1}) and (\ref{p01})  then setting $C_2=\frac{\eta C_1\mu}{4\gamma} $ and using (\ref{w1}) we arrive at  
\begin{align}\label{A1}
I_1^N+I_1^P+I_1^W\leq-\theta \int_{\Omega} (N-N^\star)^2-\frac{\eta}{2} \int_{\Omega} (P-P^\star)^2-\frac{\mu C_2}{2}\int_{\Omega}(W-W^\star)^2\leq 0 \,.
\end{align}
Next we  turn to 
\begin{align}\label{deriv1}
I_2&=-N^\star\int_{\Omega} \Big\vert\frac{\nabla N}{N}\Big\vert^2+ \chi N^\star \int_{\Omega}\frac{\nabla N \cdot \nabla W}{N} -d_p P^\star C_1\int_{\Omega} \Big\vert\frac{\nabla P}{P}\Big\vert^2-d_wC_2 \int_{\Omega}\Big\vert \nabla W\Big\vert^2
\end{align}
where  using  Young's inequality to the second term we obtain that 
\begin{align*}
\chi N^\star \int_{\Omega}\frac{\nabla N \cdot \nabla W}{N}\leq \frac{\chi^2 {N^\star}^2}{4d_wC_2}\int_{\Omega} \Big\vert\frac{\nabla N}{N}\Big\vert^2+d_w C_2 \int_{\Omega} \Big\vert\nabla W \Big\vert^2\,.
\end{align*}
The following bound is  obtained from (\ref{deriv1})  after cancellation of the last term in (\ref{deriv1}) 
\begin{align}\label{A2}
I_2\leq-N^\star\big(1- \frac{\chi^2 {N^\star}}{4d_wC_2} \big)\int_{\Omega} \Big\vert\frac{\nabla N}{N}\Big\vert^2 - C_1 d_p P^\star\int_{\Omega} \Big\vert\frac{\nabla P}{P}\Big\vert^2 \leq 0
\end{align}
which holds for  $\chi\leq \chi_0$  where  
$\displaystyle \left(\chi_0\right)^2=\frac{4d_w C_2}{N^\star}=\frac{d_w\eta \mu a(1+\beta N^*)}{N^*c\gamma (1+\alpha P^*)}\,.$
Now we combine inequalities (\ref{A1}),   (\ref{A2})  as well as (\ref{z}) to obtain  
\[\frac{d}{dt}\mathcal{L}(t) +\varrho (t)\leq 0
\]
where 
\[\varrho (t)= \theta \int_{\Omega} (N(x,t)-N^\star)^2dx+\frac{\eta}{2} \int_{\Omega} (P(x,t)-P^\star)^2dx+\frac{\mu C_2}{2}\int_{\Omega}(W(x,t)-W^\star)^2dx\,.
\]
It follows that for any $T>1$ 
\[\mathcal{L}(T) +\int_1^T\varrho(t) dt\leq \mathcal{L}(1)\]
Using the nonnegativity of $\mathcal{L}$, the uniform bound for solution from Theorem \ref{exist} and letting $T\rightarrow\infty$ we infer that
\begin{equation} \label{T}
\int_1^\infty\varrho(t) dt <\infty\,.
\end{equation}
It follows from the  parabolic regularity of the classical solution $(N,P,W)$ to (\ref{ModelB2}) and uniform $L^\infty$- bound that solution components are H\"{o}lder continuous functions on $\Omega\times [1,T]$ with H\"{o}lder constant independent on $T$ (cf.\cite[ChapterV]{Lady}. This fact entails uniform continuity of $\varrho(t)$ , $t\in (1,\infty)$ and we conclude using \cite[Lemma 3.1.]{BaiWinkler}) that
\[\lim_{t\rightarrow\infty} \varrho(t)= 0 \,.
\] 
Next using (\ref{r1}) and the Gagliardo-Nirenberg inequality (see Proposition \ref{G-N} with $p=q=\infty$ and $m=2$)   we obtain that 
\[\Vert N(\cdot,t)-N^\star)\Vert_\infty \leq C_{G-N} \Vert N(\cdot,t)-N^\star)\Vert_{1,\infty}\Vert N(\cdot,t)-N^\star) \Vert_2
\]
and similarly for remaining components of the solution.
It completes the proof.  
\end{proof}
\begin{remark}Using the same arguments as in \cite{BaiWinkler} one can prove that in fact the convergence in Theorem \ref{GlobStab} has  an exponential rate.
\end{remark}

\section{Model A -linear stability.}\label{stabA}
In this  section we consider linear stability of the coexistence steady state $\bar{E}=(\bar{N},\bar{P},\bar{W})$ in (\ref{Esteady}) for model A (\ref{modelA}) assuming the same structural assumptions for the reaction part as in the previous section (\ref{parF}), (\ref{aij}) and (\ref{aij1}) which encompass  the  Holling functional responses (\ref{FH}), Bedington-DeAngelis response (\ref{FB}) as well as Crowley- Martin response (\ref{CM}). The linearisation of the model (\ref{modelA}) at the  coexistence steady state $\bar{E}$ leads to the following stability matrix 
\begin{align}\label{jacm2}
S_j=\begin{pmatrix}
a_{11}-D_1h_j &\displaystyle a_{12}& -\chi \bar{N} h_j\\
a_{21}+\xi \bar{P} h_j & a_{22}-D_2h_j & 0\\
0 & a_{32} & a_{33}-D_3h_j
\end{pmatrix}\,.
\end{align} 
 It's characteristic polynomial follows
\begin{align*}
\sigma^3+\phi^{(1)}_j \sigma^2+\phi^{(2)}_j(\xi) \sigma+\phi^{(3)}_j(\xi) =0,
\end{align*}
where
\begin{align}
 \phi^{(1)}_j&= \rho_j^{(1)}, \label{fi1}\\
 \phi^{(2)}_j(\xi)&=\rho_j^{(2)}-\xi a_{12}\bar{P} h_j :=\rho_j^{(2)}+\xi \phi^{(2,1)}_j, \label{fi2}\\ \label{fi3}
 \phi^{(3)}_j(\xi)&=\rho^{(3)}_j+\xi (a_{12}a_{33}-D_3 a_{12}h_j+\chi \bar{N}  a_{32} h_j) \bar{P} h_j:=\rho^3_j+\xi \phi^{(3,1)}_j, 
 \end{align}
with  $\rho^{(1)}_j,\ \rho^{(2)}_j$ and $\rho^{(3)}_j$ defined in (\ref{ro1})-(\ref{ro3}). 
By the Routh-Hurtwitz stability criterion, $\bar{E}$ is locally stable in model A if and only if for all $j\in \mathbb{N}_{+}$
\begin{equation}\label{RH2}
\phi^{(1)}_j>0,\  \phi^{(3)}_j(\xi)>0 \ \text{and}\  \phi^{(1)}_j\phi^{(2)}_j(\xi)-\phi^{(3)}_j(\xi)>0.
\end{equation}
It follows from (\ref{fi1})-(\ref{fi3}) that 
\begin{equation}
\phi^{(1)}_j\phi^{(2)}_j-\phi^{(3)}_j(\xi)=\rho^{(1)}_j\rho^{(2)}_j-\rho^{(3)}_j-\xi(\rho^{(1)}_j\phi_j^{(2,1)}+\phi_j^{(3,1)}):=\rho^{(1)}_j\rho^{(2)}_j-\rho^{(3)}_j-\xi \phi_j^{(4)}\,. \label{fi4}
\end{equation}
Hence, by   (\ref{fi4}) using (\ref{ro3}) and (\ref{ro31}) we obtain that  
\begin{align}
\rho^{(1)}_j\rho^{(2)}_j-\rho^{(3)}_j-\xi \phi_j^{(4)}&=\rho^{(1)}_j\rho^{(2)}_j-\rho_j^{(3,1)} -\chi a_{21}a_{32} \bar{N}h_j-\xi \phi_j^{(4)} \nonumber \\
&:=\tilde{Q}(h_j)=\Psi(h_j)- \chi a_{21}a_{32}\bar{N}h_j-\xi \phi_j^{(4)}\,.
\end{align}
By  straightforward calculation using (\ref{ro1}) we have
 \begin{align}
\label{fi44}
\phi_j^{(4)}&= \bar{P}h_j\left(-(a_{12}a_{11}+a_{12}a_{22})+((D_1+D_2)a_{1,2}+\chi a_{32}\bar{N})h_j\right)\,, \\ \label{fi45}
&:=\bar{P}h_j\left(\zeta_1 +\zeta_2(\chi) h_j\right)\,.
\end{align} 
Notice that from the fact that $a_{12}\,, a_{11}<0$,  $a_{22}\leq 0$ and  $a_{32}>0$ it follows that $\zeta_1<0$  and $\zeta_2(\chi)<0$ provided
\begin{equation} \label{RHA}
\quad \chi <\chi^S:= \frac{-a_{12}(D_1+D_2)}{a_{32}\bar{N}}\,.
\end{equation} 
Now we are in position to formulate the following  stability result for the coexistence steady state $\bar{E}$ in model A.
\begin{theorem}\label{ThmstabA}
Suppose that $\chi<\max\{\chi^H\,,\chi^S\}$ and $\xi>0$. Under assumptions (\ref{FGwB})  and (\ref{parF})  the following conditions determine the local stability of the constant steady state $\bar{E}$ in model A. 
\noindent
\begin{enumerate}
\item Suppose that  $\chi^S\leq \chi^H$ .
\begin{enumerate}
\item[a)]
If  $\chi\in(0\,,\chi^S)$ then $\bar{E}$ is locally asymptotically stable for all $\xi\geq 0$.
\item[b)]
If  $\chi\in(\chi^S\,,\chi^H)$ then there exists $\xi^S>0$ such that $\bar{E}$ is locally asymptotically stable for all $\xi<\xi^S$ and it is unstable if $\xi>\xi^S$.
\end{enumerate}
\item Suppose that  $\chi^S>\chi^H$. 
\begin{enumerate}
\item[a)]If $\chi\in(0\,,\chi^H)$ then $\bar{E}$ is locally asymptotically stable for all $\xi\geq 0$.
\item[b)] If $\chi\in(\chi^H\,,\chi^S)$ then there exists $\xi^\star>0$ such that  $\bar{E}$ is locally asymptotically stable for all $\xi>\xi^\star$ and it is unstable if $\xi<\xi^\star$
\end{enumerate}
\end{enumerate}
\end{theorem}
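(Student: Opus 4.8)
The plan is to apply the Routh--Hurwitz criterion (\ref{RH2}) to the cubic characteristic polynomial of $S_j$ for every $j\geq 1$; the mode $j=0$ has $h_0=0$, so $\phi^{(4)}_0=0$ and $\tilde Q(0)=\Psi(0)=\alpha_0\beta_0-\gamma_0>0$, recovering the stable ODE case independently of $\xi$. First I would dispose of the two conditions in (\ref{RH2}) that never fail. The condition $\phi^{(1)}_j=\rho^{(1)}_j>0$ is immediate from (\ref{ro1}) and the positivity of $\alpha_0,\alpha_1$. For $\phi^{(3)}_j(\xi)$ I would note that $\rho^{(3)}_j(\chi)=-\det M_j>0$ for all $\chi>0$ as established in Section \ref{stabB}, while the correction $\phi^{(3,1)}_j=(a_{12}a_{33}-D_3a_{12}h_j+\chi\bar N a_{32}h_j)\bar Ph_j$ is a sum of three nonnegative terms (using $a_{12}<0$, $a_{33}<0$, $a_{32}>0$), whence $\phi^{(3)}_j(\xi)=\rho^{(3)}_j+\xi\phi^{(3,1)}_j>0$ for every $\xi\geq 0$. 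Thus the whole question collapses to the single sign condition $\phi^{(1)}_j\phi^{(2)}_j(\xi)-\phi^{(3)}_j(\xi)=\tilde Q(h_j)>0$ for all $j\geq 1$, which in turn forces $\phi^{(2)}_j(\xi)>0$.

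The second step is to extract the $\chi$- and $\xi$-dependence from (\ref{fi4})--(\ref{fi45}). Writing $Q_j:=\Psi(h_j)-\chi\rho^{(3,2)}_j$ for the Model~B quantity of (\ref{local2B}), I would use $\tilde Q(h_j)=Q_j-\xi\phi^{(4)}_j$ with $\phi^{(4)}_j=\bar Ph_j(\zeta_1+\zeta_2(\chi)h_j)$. Two structural facts drive everything: (i) $Q_j>0$ for all $j\geq 1$ if and only if $\chi<\chi^H$, by the very definition (\ref{chih}); and (ii) the sign of $\phi^{(4)}_j$ is controlled by $\zeta_1<0$ together with $\zeta_2(\chi)$, which by (\ref{RHA}) is negative precisely when $\chi<\chi^S$. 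Hence for $\chi<\chi^S$ one has $\phi^{(4)}_j<0$ for every $j\geq 1$, so the taxis term $-\xi\phi^{(4)}_j$ is stabilizing; for $\chi>\chi^S$ the factor $\zeta_1+\zeta_2(\chi)h_j$ changes sign and $\phi^{(4)}_j>0$ for all sufficiently large $h_j$, so taxis destabilizes the high modes.

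The four cases then fall out. When $\chi$ lies below both thresholds (cases 1a and 2a) one has $Q_j>0$ and $\phi^{(4)}_j<0$, so $\tilde Q(h_j)=Q_j-\xi\phi^{(4)}_j>0$ for every $\xi\geq 0$ and $\bar E$ is stable irrespective of $\xi$. In case 1b ($\chi^S<\chi<\chi^H$) one still has $Q_j>0$, but $\phi^{(4)}_j>0$ on all large modes, so stability is lost there once $\xi$ is large; the sharp threshold is
\[\xi^S:=\min\Big\{\tfrac{Q_j}{\phi^{(4)}_j}:\ j\geq 1,\ \phi^{(4)}_j>0\Big\}>0,\]
with $\bar E$ stable for $\xi<\xi^S$ and some mode satisfying $\tilde Q(h_j)<0$ (hence an eigenvalue with positive real part) for $\xi>\xi^S$. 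Symmetrically, in case 2b ($\chi^H<\chi<\chi^S$) the state is already unstable at $\xi=0$ since $Q_j<0$ for some $j$, while $\phi^{(4)}_j<0$ renders $-\xi\phi^{(4)}_j$ stabilizing; solving $Q_j-\xi\phi^{(4)}_j>0$ on the unstable modes yields
\[\xi^\star:=\max\Big\{\tfrac{Q_j}{\phi^{(4)}_j}:\ j\geq 1,\ Q_j<0\Big\}>0,\]
with instability for $\xi<\xi^\star$ and stability restored for $\xi>\xi^\star$.

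The step I expect to be the main obstacle is showing that $\xi^S$ and $\xi^\star$ are attained at a finite index and are strictly positive, so that the thresholds are genuine. This rests on the asymptotics as $h_j\to\infty$: $Q_j$ grows cubically with positive leading coefficient $\alpha_1\beta_2-\gamma_3$ (all coefficients of $\Psi$ being positive, as recorded after (\ref{ro31})), whereas $\phi^{(4)}_j$ grows only quadratically, so $Q_j/\phi^{(4)}_j\sim \mathrm{const}\cdot h_j\to+\infty$. In particular the index set with $Q_j<0$ is finite, so $\xi^\star$ is a maximum over a finite set, and the ratios relevant to $\xi^S$ diverge, so the minimum is attained. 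The remaining care is to confirm that crossing $\tilde Q(h_j)=0$ with $\phi^{(1)}_j,\phi^{(3)}_j>0$ produces a root with positive real part rather than a mere degeneracy; this follows from the Routh--Hurwitz structure of the cubic, since at equality one has a conjugate imaginary pair and strict violation pushes it into the right half-plane.
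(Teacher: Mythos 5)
Your proof is correct and follows essentially the same route as the paper: reduce to the Routh--Hurwitz condition $\tilde Q(h_j)=Q_j-\xi\phi^{(4)}_j>0$, control the sign of $\phi^{(4)}_j=\bar P h_j(\zeta_1+\zeta_2(\chi)h_j)$ through the comparison of $\chi$ with $\chi^S$, and define the thresholds as extrema of the ratios $Q_j/\phi^{(4)}_j$. You are in fact slightly more careful than the paper in two places: you explicitly verify $\phi^{(3)}_j(\xi)>0$ and the attainment of the extrema via the cubic-versus-quadratic growth of $Q_j$ and $\phi^{(4)}_j$, and in case 2b your threshold $\xi^\star=\max\{Q_j/\phi^{(4)}_j:\ Q_j<0\}$ is the exact one, whereas the paper only exhibits a sufficient bound through the cruder estimate $R(h_{j^\star})+\xi K_0$.
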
 
\begin{proof}

\noindent
In the case $1a)$ there holds (\ref{RHA}), so, $\zeta_2(\chi)<0$ and $\xi \phi_j^{(4)}<0$. At the same time there holds (\ref{local1B}) and for $\chi<\chi^H$ by (\ref{local2B}) we have 
\begin{equation}\label{phj}
R(h_j)(\chi):= \Psi(h_j)- \chi a_{21}a_{32}\bar{N}h_j>0
\end{equation}
and we infer that for any $j\geq 0$ and $\xi\geq 0$ 
\[\tilde{Q}(h_j)=R(h_j)(\chi)-\xi \phi_j^{(4)}>0\]
which completes the proof of $1a)$.

\noindent
In the case $1b)$ we have  $\zeta_2(\chi)\geq 0$ and 
\begin{equation}\label{zh}
\zeta_1+\zeta_2(\chi) h_j>0 
\end{equation}
for 
\[ h_j\geq h^\star:= \frac{-\zeta_1}{\zeta_2(\chi)}\,.\]
Then $\tilde{Q}(h_j)>0$ for any $\xi>0$ and $j\geq 0$ such that $h_j\leq h^\star$ and (\ref{phj})  still holds. By (\ref{fi44}) and (\ref{zh})  there exists $\xi^S>0$ such that 
\begin{equation} \label{ksis}
\xi^S=\min_{\{j:\{h_j>h^\star\}}\left\{\frac{R(h_j)(\chi)}{\bar{P}h_j\left(\zeta_1 +\zeta_2(\chi) h_j\right)} \right\}\,.
\end{equation}
 It completes the proof of 1b).                                                       

\noindent
The case $2a)$ is similar to $1a)$. In the case $2b)$ for some $j>0$ we have   $R(h_j)(\chi)<0$ . 
Since $\Psi(x) $ in (\ref{phj}) is a third order polynomial with positive coefficients (see (\ref{ro31}) ) there is $j^\star$ such that 
\[R(h_{j^\star})= \min_{\{j\in N_+\}} R(h_{j})\,.\]
Since $\zeta_2(\chi)<0$ we define 
\[\min_{\{j\in N_+\}} \{-\bar{P}h_j(\zeta_1+\zeta_2(\chi)h_j)\}=K_0>0
\]
end finally there is a minimal $\xi=\xi^\star>0$ such that
\[\tilde{Q}(h_j)\geq R(h_{j^\star}) +\xi K_0> 0\quad\mbox{for}\quad \xi>\xi^\star
\]
and proof of $2b)$ is completed. 
\end{proof}
\section{Numerical Simulations}\label{NumSim}
In this section, we present numerical results for  model B  (\ref{modelB}) \& model A (\ref{modelA}) which exhibit  the spatio-temporal dynamics of the proposed models. We fix a set of positive parameters and investigate the spatio-temporal dynamics for models A  (\ref{modelA})  \& B  (\ref{modelB}) with special emphasis on Holling II functional response which corresponds to the extension of the Rosenzweig-MacArthur model (\ref{ModelB1}). Patterns obtained  for model A and model B  with Beddington-DeAngelis functional response turned out not to  exhibit essentially new effects with respect to Rozenzweig-MacArthur model and were not included to this section.  Solutions  in 1D domain are obtained with the help of MATLAB PDEPE tool ($\Delta x=0.01\,, \Delta t=0.1$) and for 2D simulations  Freefem++ with $\Delta x=\Delta y=0.01,\ \Delta t=0.1$ was used. For the  following  values of model parameters which are chosen in simulations
\begin{align}\label{para1}
\begin{aligned}
 & r=0.25,\ \alpha=0.5,\ \beta=2,\ b=0.85,\  a=0.95,\ \delta=0.17, \mu=0.5,\ \gamma=10, \ d_p=0.01,\\\
&  \ d_w=0.01.
 \end{aligned}
\end{align}
model (\ref{ModelB1}) has  the  unique positive coexistence steady state 
\begin{equation}\label{Enum}
\bar{E}=(0.3333,\ 0.2924,\ 5.8490)\,.
\end{equation}  First we start with some numerical results related to model B (\ref{ModelB1}) and discuss the impact of chemo-repulsion on the stability of predator-prey system. 

The figures contained in this section exhibit the following features of solutions :
\begin{itemize}
\item stabilization to the constant steady state; Figs. 1 \& 7,
\item periodic space-time patterns corresponding to periodic initial data; Figs. 3, 8 \& 9,
\item emergence of periodic or almost periodic patterns  corresponding to a localized in space  initial perturbation of the coexistence steady state; Figs. 2, 4 \& 14,
\item transient patterns with abrupt change of characteristic scale of oscillations Figs. 5, 10 \& 14,
\item oscillatory rings and periodic change of pattern geometry illustrating pursuit and evasion dynamics in 2D-simulations in square; Figs. 6, 12\& 13, 
\item formation of singular spiky solutions in 2D simulation; Fig. 11. 
\end{itemize}
\subsection{Numerical simulation for Model B in version (\ref{ModelB1})}
 The objective of this section is to investigate the transient dynamics for the model B (\ref{ModelB1}) depending upon the parameter $\chi$ and initial data with  fixed set of parameters (\ref{para1}). Accordingly numerical values for the stability matrix (\ref{smat1}) and the critical value $\chi=\chi^H$  are following
\begin{align*}
M_j\approx\begin{pmatrix}
-0.0167-h_j & -0.19 & -0.3333\chi h_j\\
0.0895& -0.01h_j & 0\\
0 & 10 & -0.5-0.01 h_j
\end{pmatrix}.
\end{align*}
The coefficients of polynomial (\ref{char1}) are calculated as
\begin{align*}
&\rho_j^1\approx0.55+1.02h_j>0,\ \rho_j^2\approx0.0201h_j^2+0.506h_j+0.1994>0,\\
& \rho_j^3\approx0.0001h_j^3+0.005h_j^2+0.002h_j+0.0872+1.02\chi h_j>0,\\
& \rho_j^1\rho_j^2-\rho_j^3\approx0.0201h_j^3+0.506h_j^2+0.0546h_j+0.1994-1.02\chi h_j.
\end{align*}
The $\chi^H$ defined in (\ref{chih}) is calculated as
\begin{align*}
\chi^H\approx \min_{j\in \mathbb{N}_{+}}\Big\{ \frac{0.0201h_j^3+0.506h_j^2+0.0546h_j+0.1994}{1.02 h_j}\Big\}
\end{align*}
which is attained  at $j=1$ (i.e. $h_1=\frac{\pi^2}{L^2}$). We numerically obtain the minimum value of $\chi^H=6.889$ in the unit domain for the parameter values defined in (\ref{para1}). 

Fig. \ref{fig1}  presents numerical illustration of linear stability of the coexistence steady state for model B in the case when the initial data is the following  perturbation of the constant steady state 
\begin{align}\label{idata}
N(x,0)=\bar{N}+0.1 \cos\big(\frac{j\pi x}{L}\big),\ P(x,0)=\bar{P}+0.1\cos\big(\frac{j\pi x}{L}\big),\ W(x,0)=\bar{W}+\cos\big(\frac{j\pi x}{L}\big)
\end{align}
with unit domain and $j=1$. As expected  the solution  approaches  the  constant steady state $\bar{E}$  for $\chi<\chi^H$.  

Fig. \ref{fig2}  depicts simulations corresponding to initially homogeneous in space  distribution of prey and the chemical  along with initial  cluster of predators in the middle of the domain.  We observe evolution of patterns when $\chi>\chi^H$. It is worth noting that prey are able to avoid and successfully escape from predator dominant area when the time passes. It has been observed that although initially only predator density was perturbed  the amplitude of periodic patterns for the prey is much higher then both  the predator and the chemical. 

In Fig. \ref{fig3} one can see the emergence of periodic and spatially inhomogeneous patterns for the symmetric initial data (\ref{idata}) with $j=2$. The patterns are more clear for the distribution of  prey then that of predator and chemical. Prey prefer to migrate to the corners of the 1D domain and exchange the position periodically  with the predator. The predator and the chemical show a similar behavior  with a significantly smaller amplitude of fluctuations. These observations suggest that chemo-repulsion  driven instability highly affects the spatial distribution of prey and much less  the predator's distribution in case when the motility of predators is subject to the diffusive spread (Model B) of random movement of predators. 

Figs. \ref{fig4} and \ref{fig7} present simulations for increased domain size $ L = 10 $, in which all other parameters are kept the same (\ref{para1}). First we show transient  patterns starting from the initial data $N(x,0)=\bar{N}, P(x,0)=\bar{P}+0.1 \cos\big(\frac{2\pi x}{L}\big),\ W(x,0)=\bar{W}$  for a high value of  $\chi>\chi^H$. For $L=10$ we find $\chi^H\approx 2.0834$. From Figs \ref{fig41} \& \ref{fig42} we infer that prey very quickly runs away from the predator's dominant area and creates nice  spatial structures that are  non-periodic up to $ t = 19 $ but after some time the predator leads to very little dominance. It is important to note that the space-time periodic pattern appears after some time (see Fig. \ref{fig43}). On the other hand Figs  \ref{fig71} \& \ref{fig72} give an interesting example of abrupt structural   change of a regular pattern  which appears at some time ($t=40$) for the solution starting from asymmetric initial data (\ref{idata}) such that $j=5$. 
This result reveals that if chemo-sensitivity coefficient is high enough then irregular spatio-temporal pattern may appear. We observe large amplitude fluctuation in the prey population whereas amplitude of predator population fluctuation is very small and negligible (see Figs \ref{fig71} \& \ref{fig72}).

\begin{figure}[hbt!]  
\centering 
\begin{subfigure}{0.3\textwidth}
               \includegraphics[width=\textwidth]{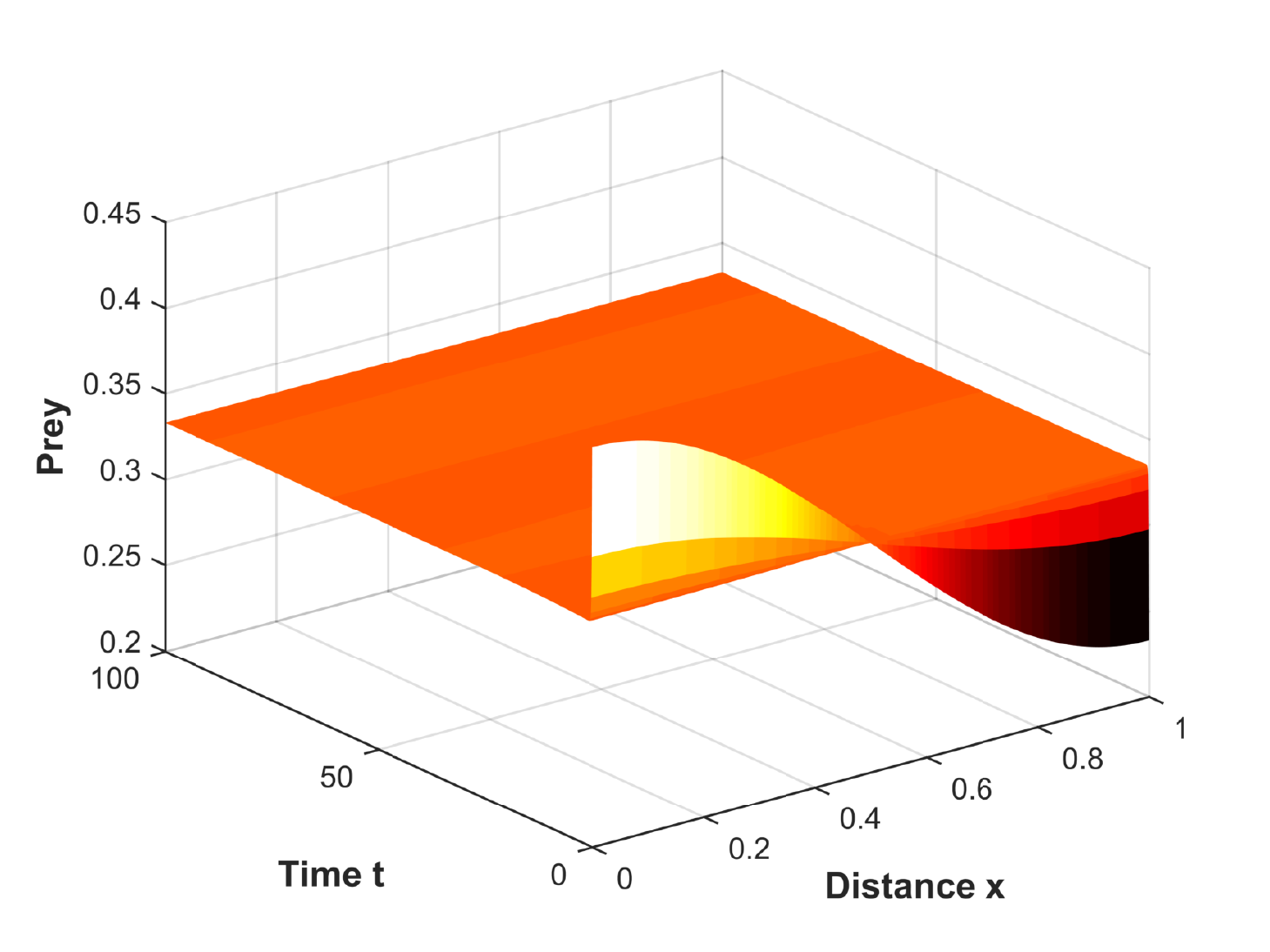} 
                \caption{}
                \label{fig11}
        \end{subfigure}
        \begin{subfigure}{0.3\textwidth} 
                  \includegraphics[width=\textwidth]{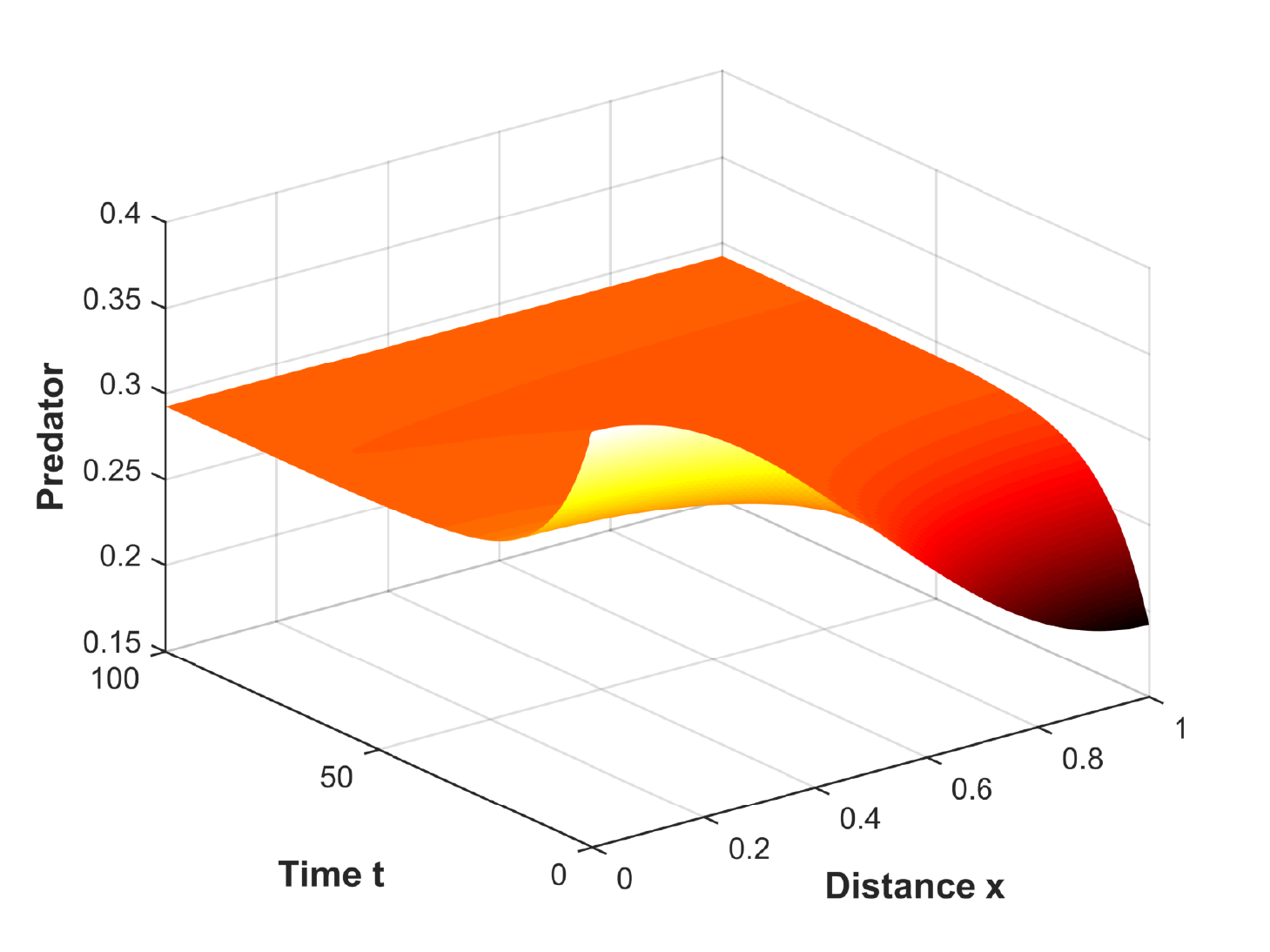}  
                \caption{}
                \label{fig12}
        \end{subfigure} 
     \begin{subfigure}{0.3\textwidth} 
    	\includegraphics[width=\textwidth]{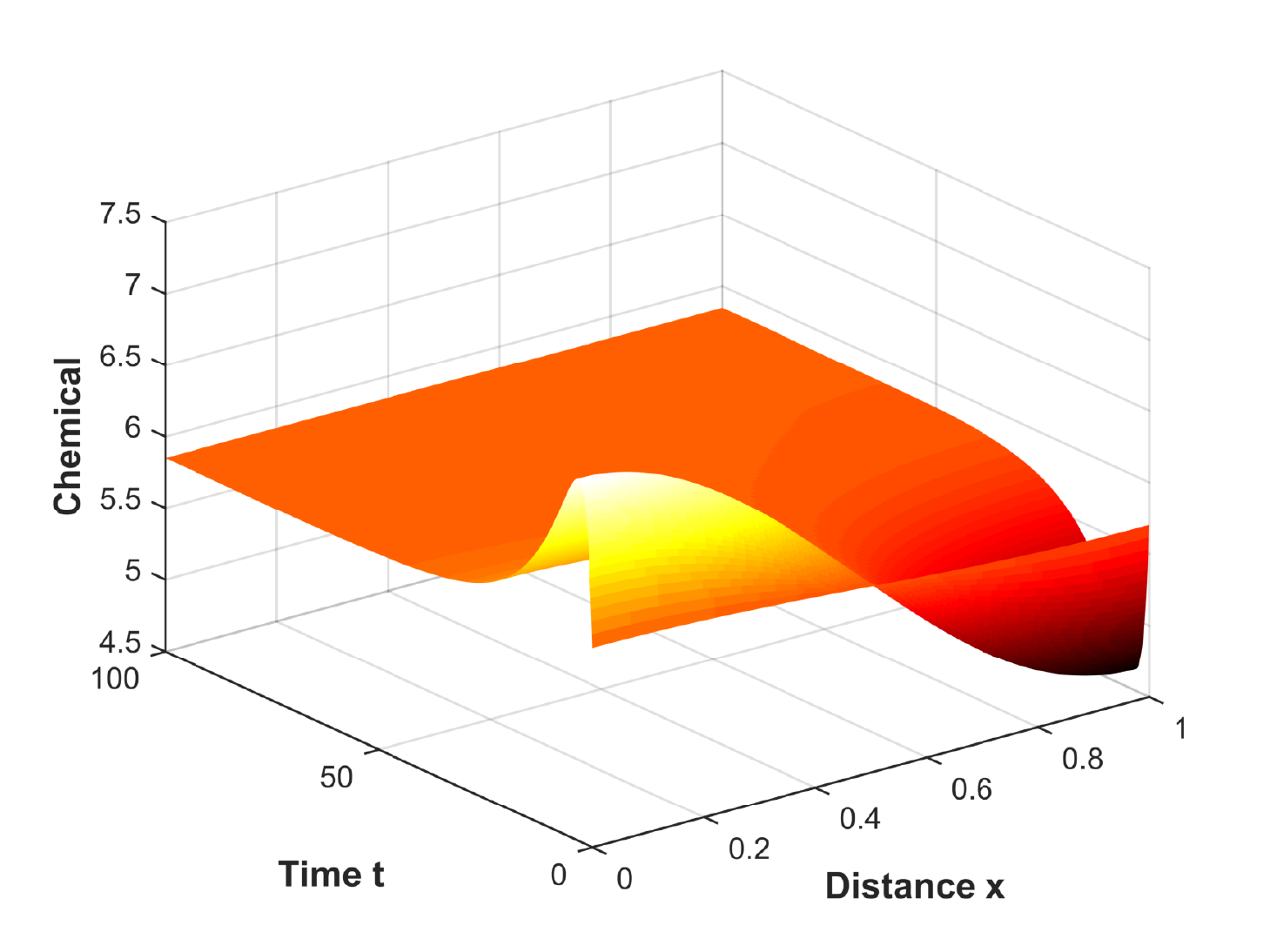} 
    	\caption{}
    	\label{fig13}
    \end{subfigure} 
\caption{Model B: spatio-temporal perturbation (\ref{idata}) in model (\ref{ModelB1}) approaches the  constant steady state $\bar{E}$ (c.f. (\ref{Enum})) for parameter set (\ref{para1}) with $\chi<\chi^H$.}
\label{fig1}
\end{figure}
\begin{figure}[hbt!] 
\center
\begin{subfigure}{0.3\textwidth}
               \includegraphics[width=\textwidth]{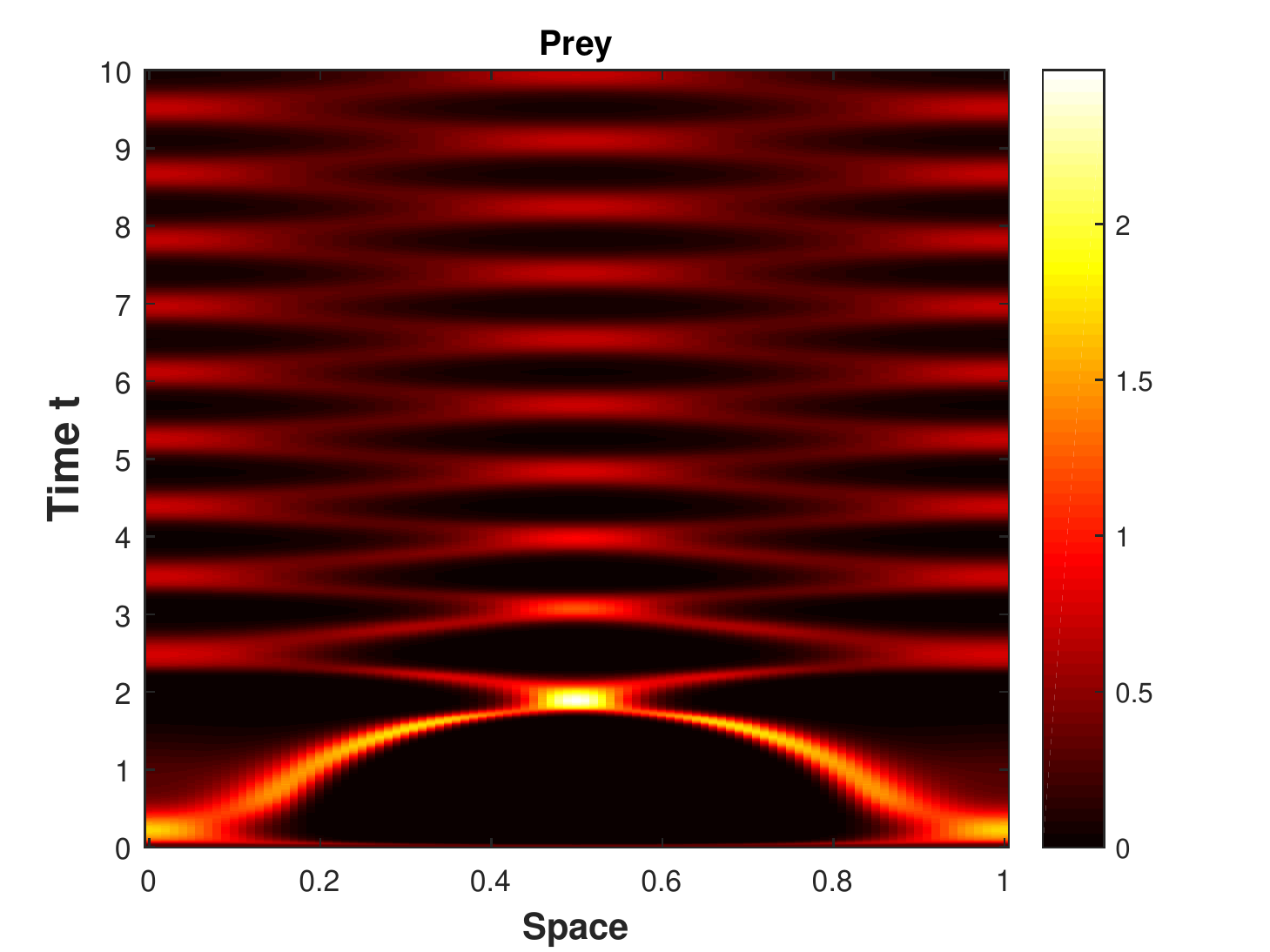} 
                \caption{}
                \label{fig21}
        \end{subfigure}
        \begin{subfigure}{0.3\textwidth} 
                  \includegraphics[width=\textwidth]{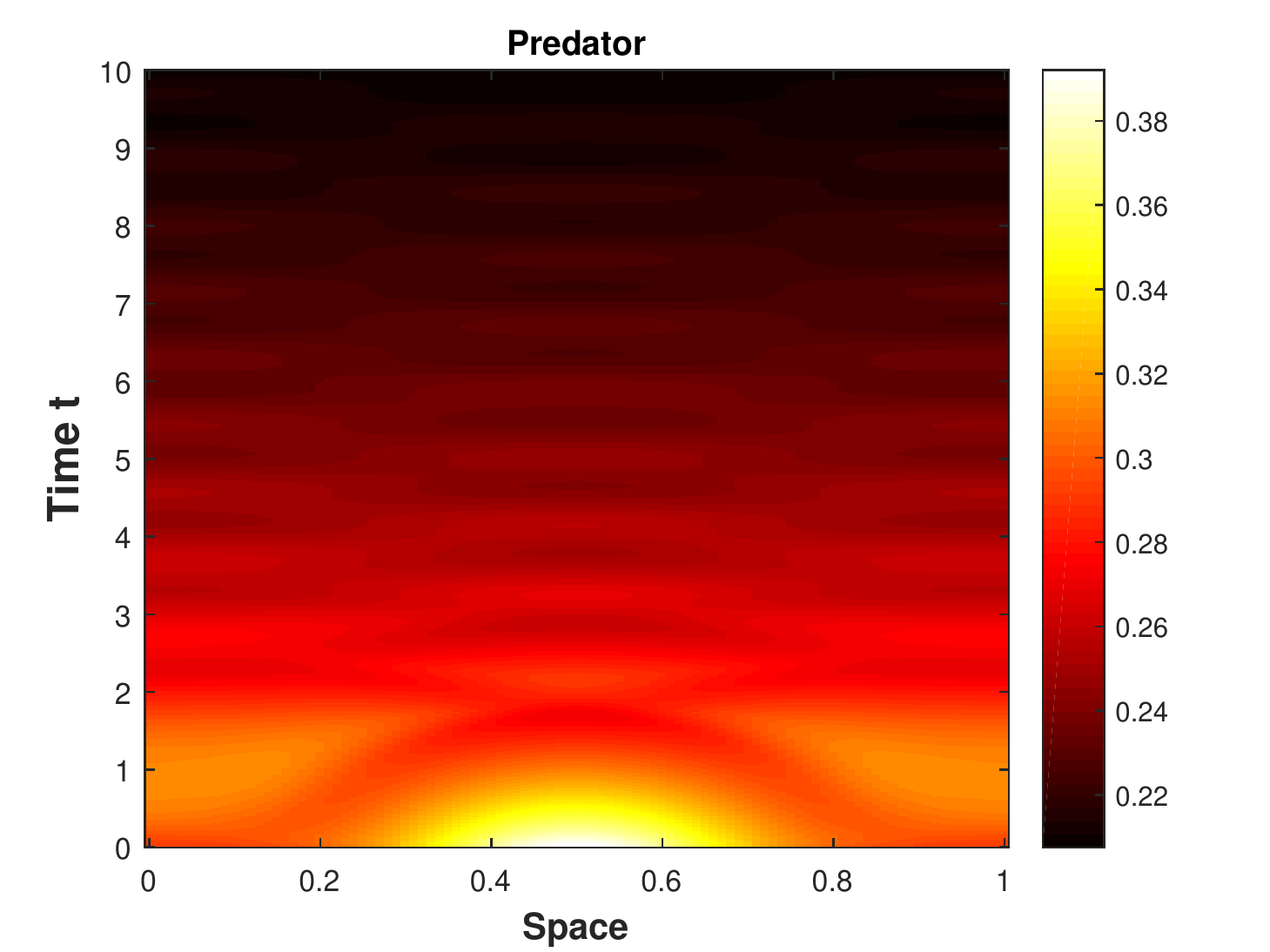} 
                \caption{}
                \label{fig22}
        \end{subfigure} 
     \begin{subfigure}{0.3\textwidth} 
    	\includegraphics[width=\textwidth]{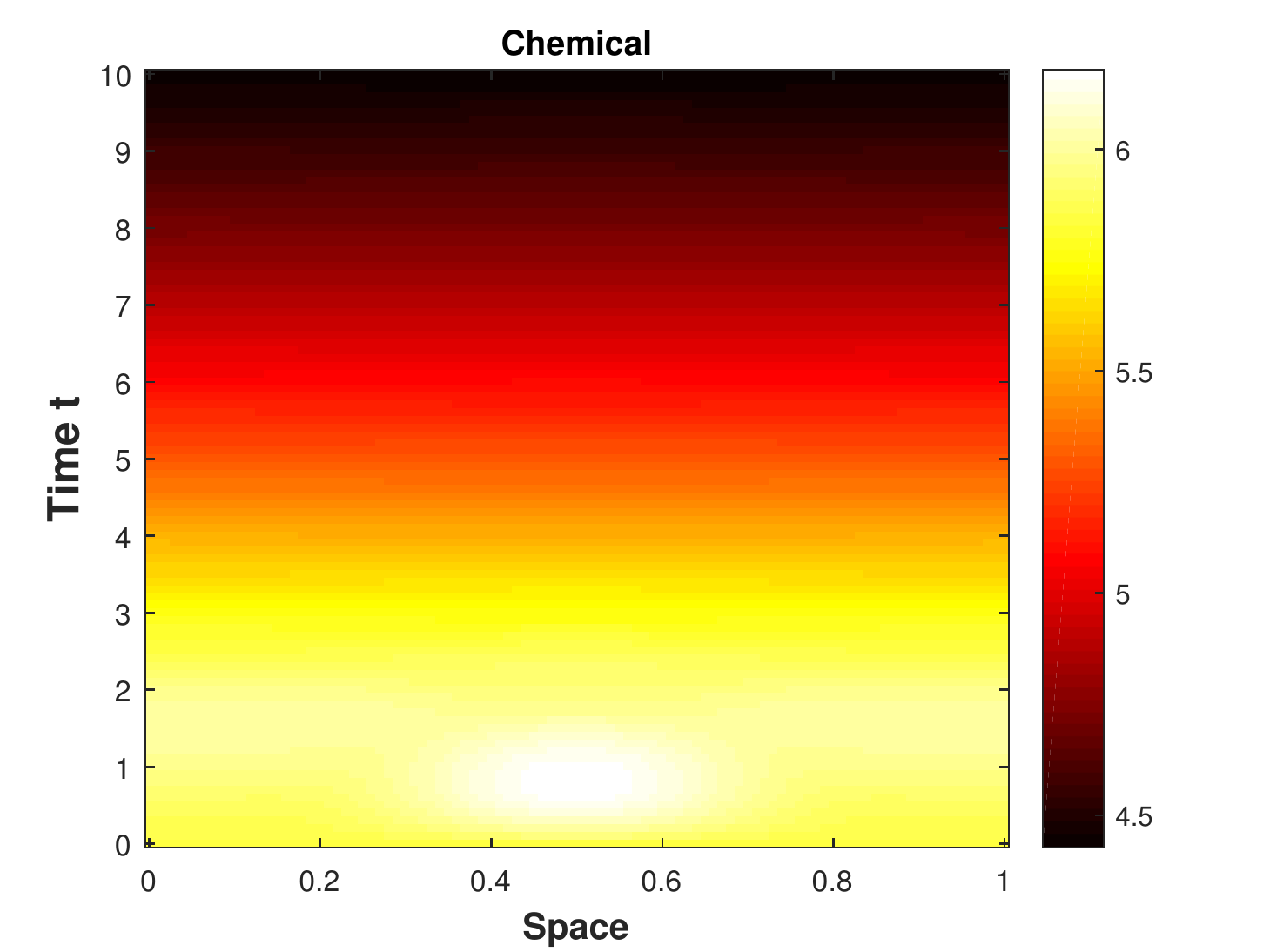} 
    	\caption{}
    	\label{fig23}
    \end{subfigure} 
\caption{Model B: emergence of the spatio-temporal patterns for $\chi>\chi^H$ parameter set (\ref{para1}) of the model(\ref{ModelB1}) for initial data $N(x,0)=\bar{N}, P(x,0)=\bar{P}+0.1e^{-( \frac{x-0.5}{0.2})^2},\ W(x,0)=\bar{W}$.}
\label{fig2}
\end{figure}

\begin{figure}[hbt!] 
	\center
		\includegraphics[height=3.5cm,width=5cm]{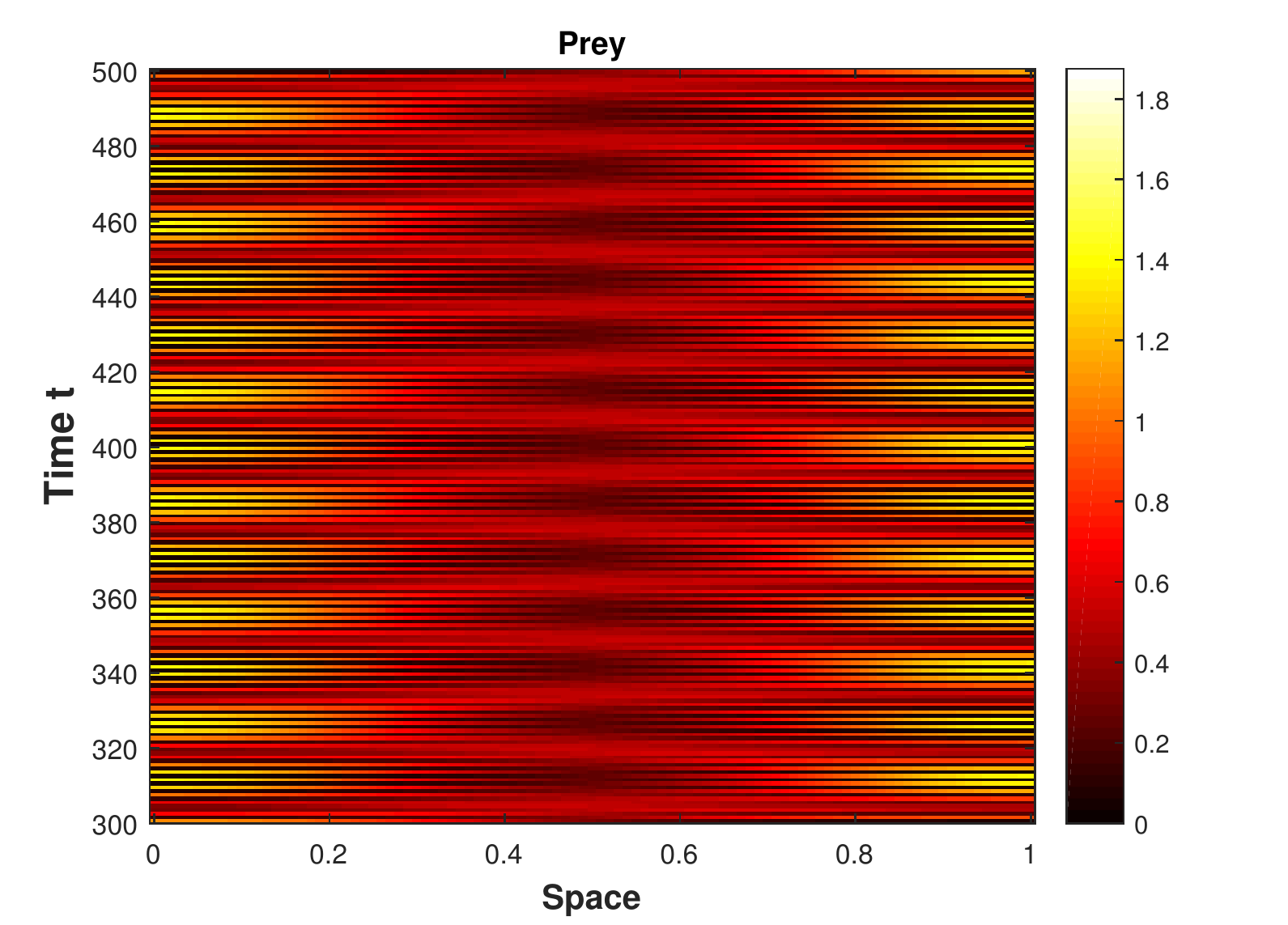} 
		\includegraphics[height=3.5cm,width=5cm]{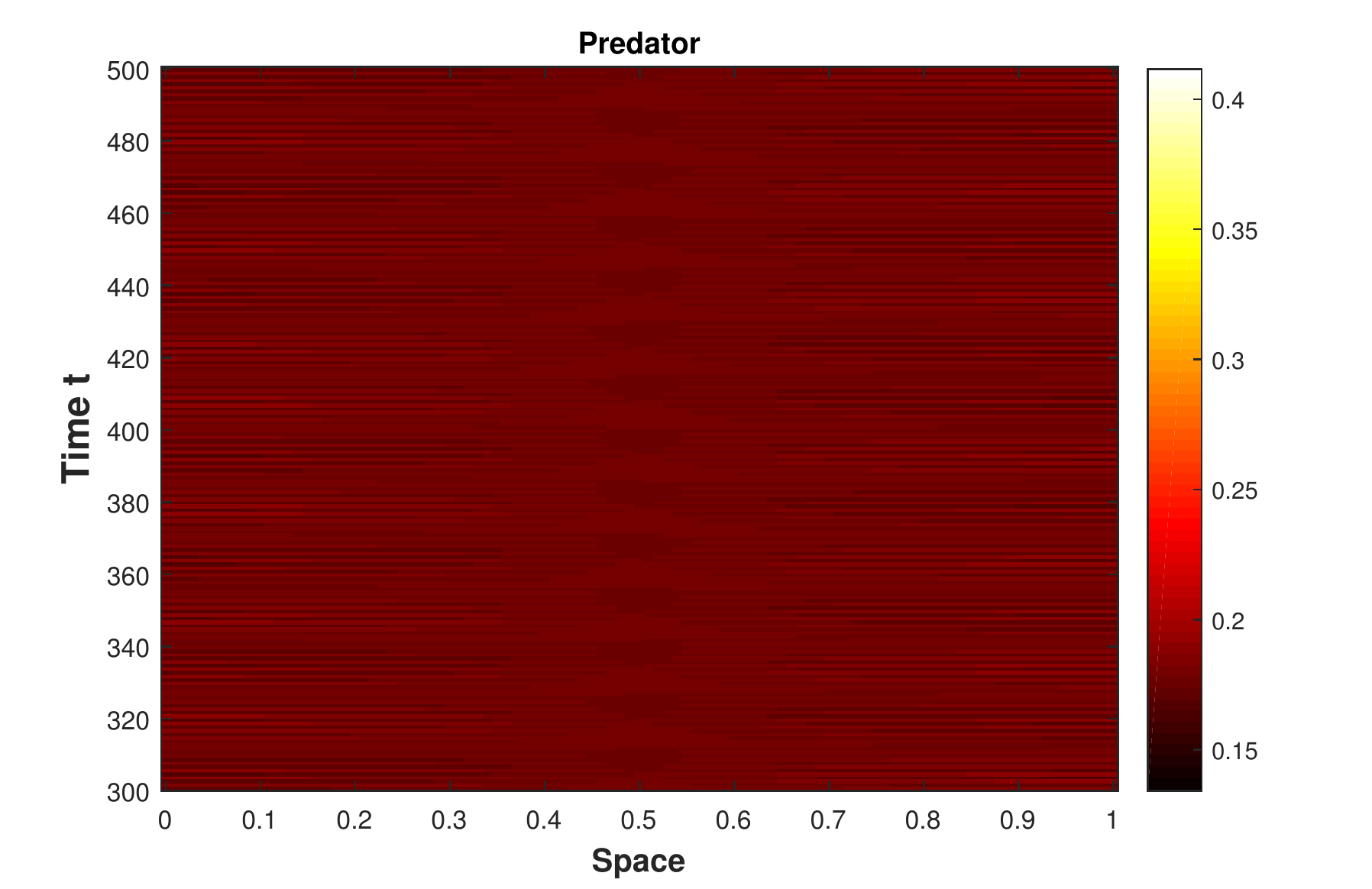}  
		\includegraphics[height=3.5cm,width=5cm]{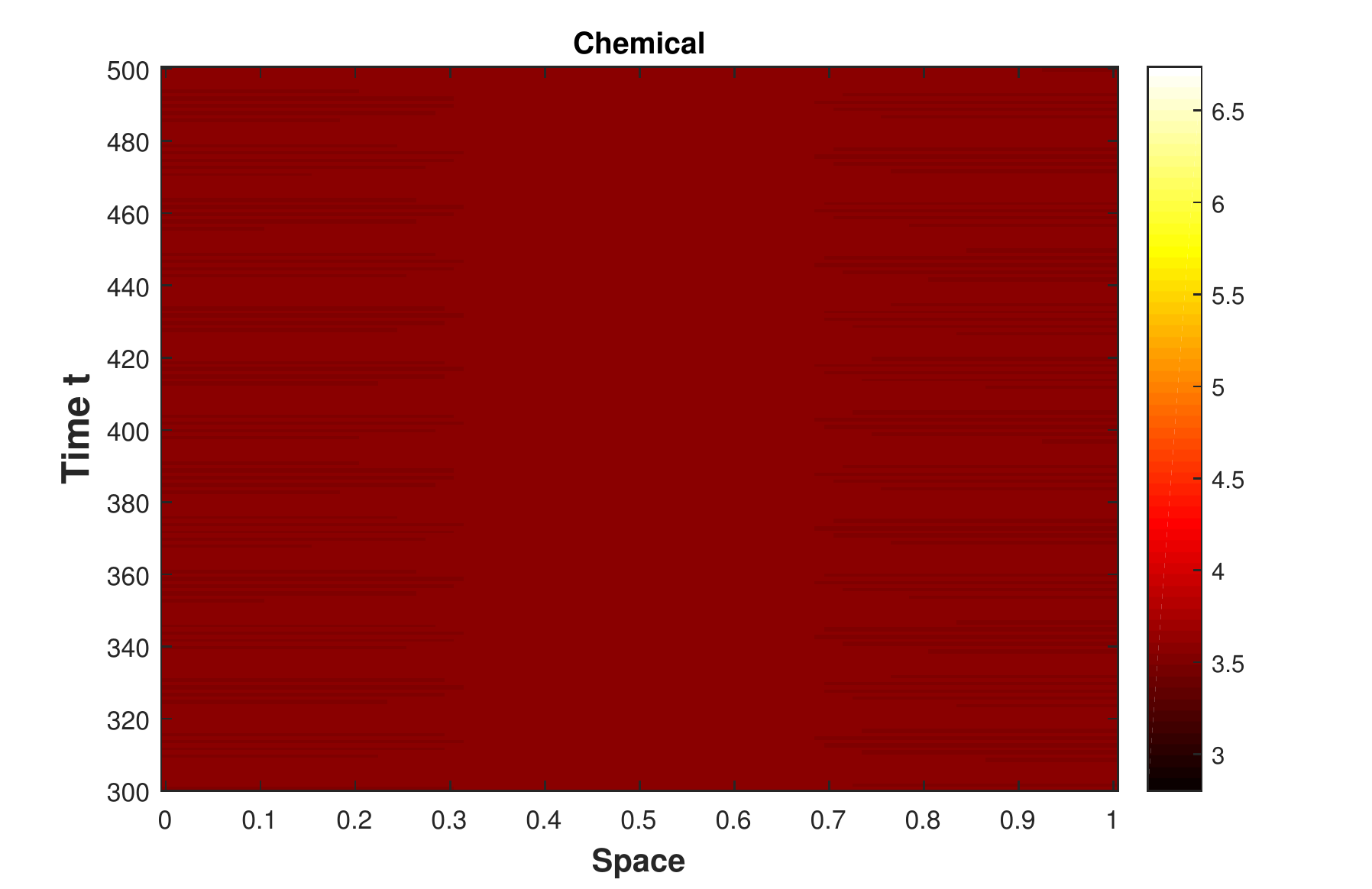}  
	\caption{Model B: spatio-temporal patterns emerged for non-symmetric initial data defined in (\ref{idata}) for $j=2$ in  the unit domain and $\chi>\chi^H$.}
	\label{fig3}
\end{figure}
\begin{figure}[hbt!] 
	\center
	\begin{subfigure}{0.3\textwidth}
		\includegraphics[width=\textwidth]{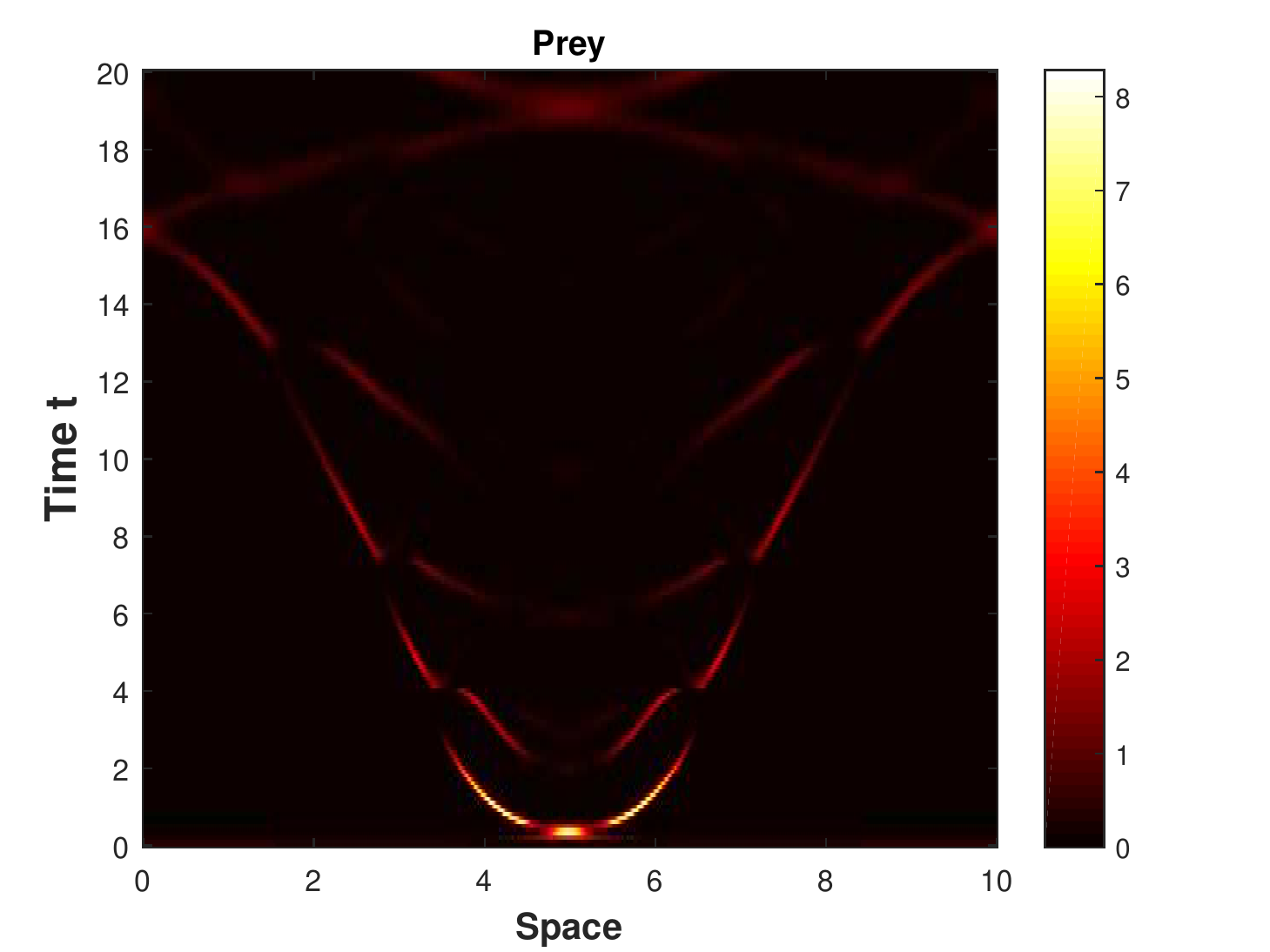} 
		\caption{}
		\label{fig41}
	\end{subfigure}
	\begin{subfigure}{0.3\textwidth} 
		\includegraphics[width=\textwidth]{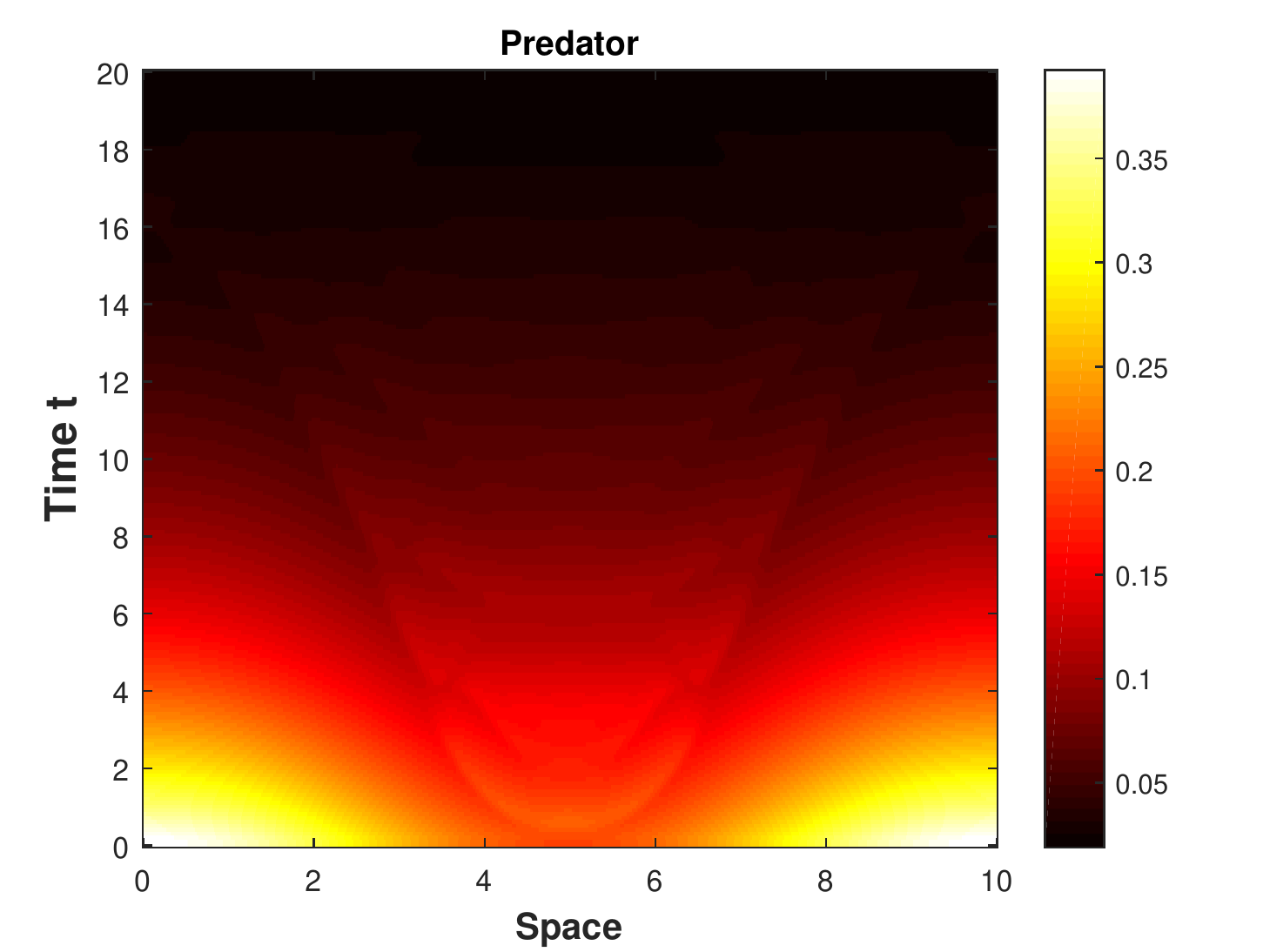}   
		\caption{}
		\label{fig42}
	\end{subfigure} 
	\begin{subfigure}{0.3\textwidth} 
	\includegraphics[width=\textwidth]{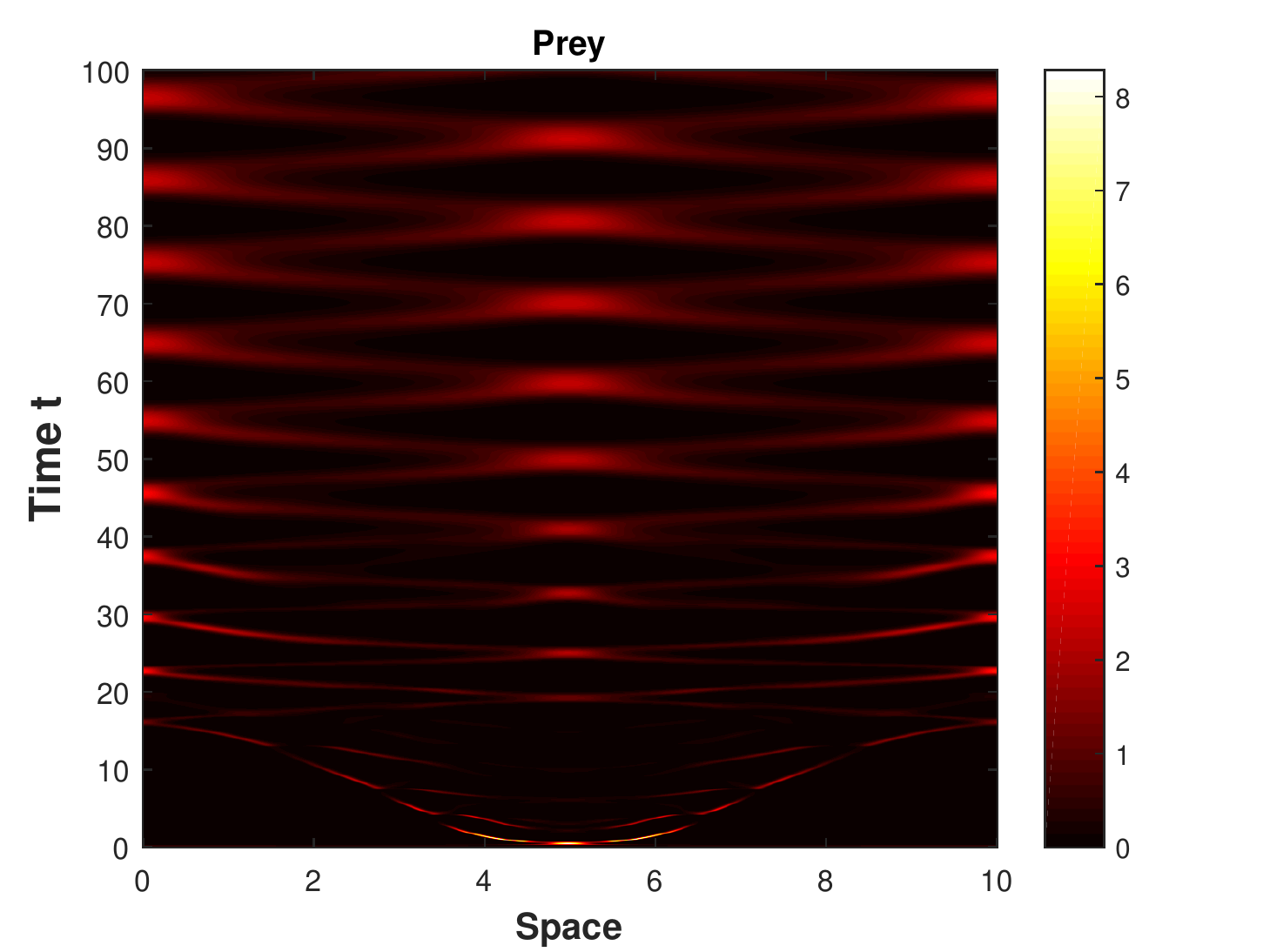} 
	\caption{}
	\label{fig43}
\end{subfigure} 
	\caption{(a)-(b)-(c) Model B: evolution of patterns starting from initial data $N(x,0)=\bar{N}, P(x,0)=\bar{P}+0.1 \cos\big(\frac{2\pi x}{L}\big),\ W(x,0)=\bar{W}$  in  the enlarged domain $L=10$ and $\chi>\chi^H$.}
	\label{fig4}
\end{figure}
\begin{figure}[hbt!] 
	\center
	\begin{subfigure}{0.3\textwidth}
		\includegraphics[width=\textwidth]{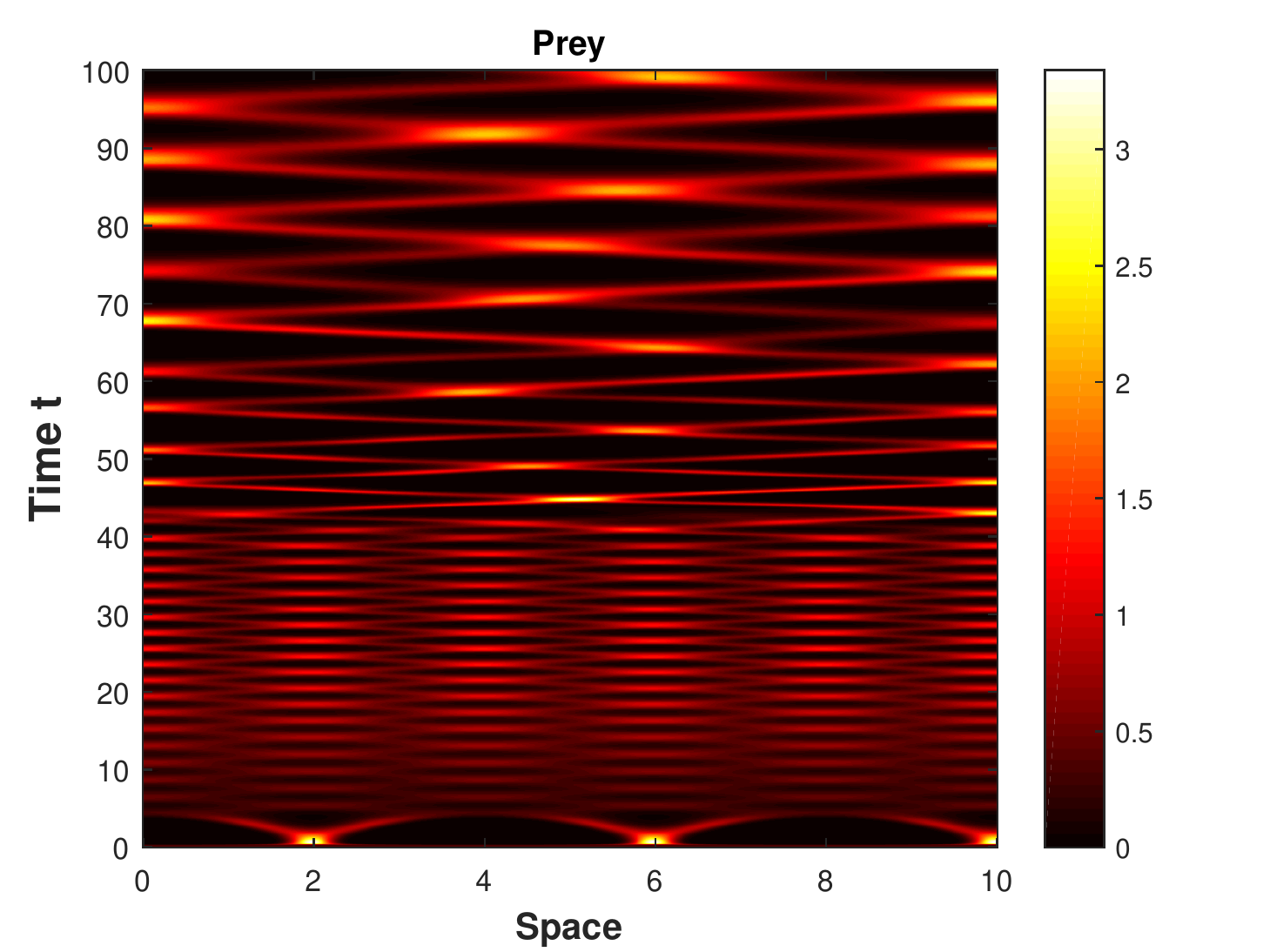} 
		\caption{}
		\label{fig71}
	\end{subfigure}
	\begin{subfigure}{0.3\textwidth} 
		\includegraphics[width=\textwidth]{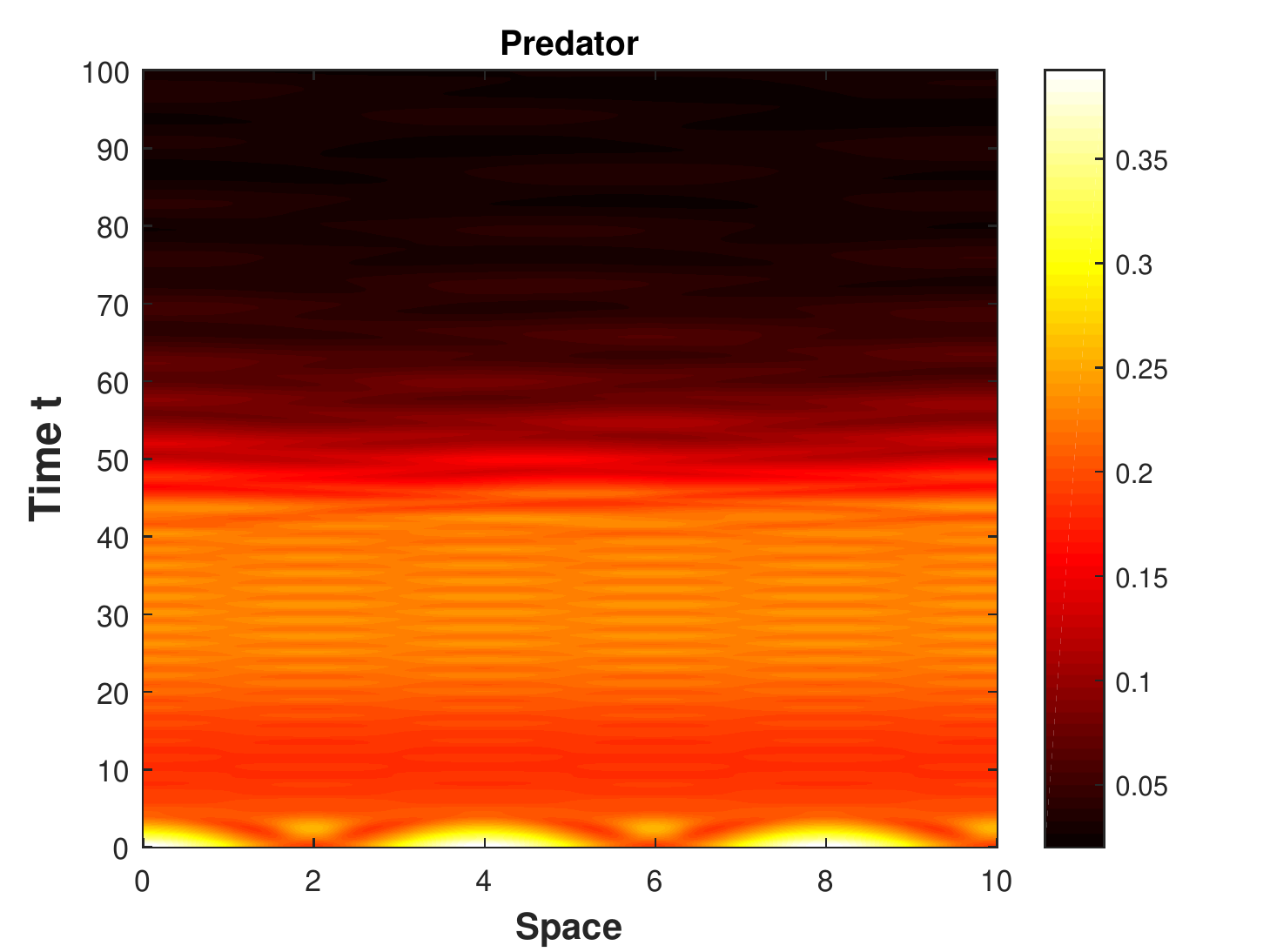}   
		\caption{}
		\label{fig72}
	\end{subfigure} 
	\caption{Model B: transient spatio-temporal pattern for $\chi$ very far away from its critical values in the enlarged domain $L=10$ with initial data $N(x,0)=\bar{N}, P(x,0)=\bar{P}+0.1 \cos\big(\frac{4\pi x}{L}\big),\ W(x,0)=\bar{W}$.}
	\label{fig7}
\end{figure}
\begin{figure}[hbt!]  
\centering 
\subfloat[\label{chm1}]{\includegraphics[height=3cm,width=4cm]{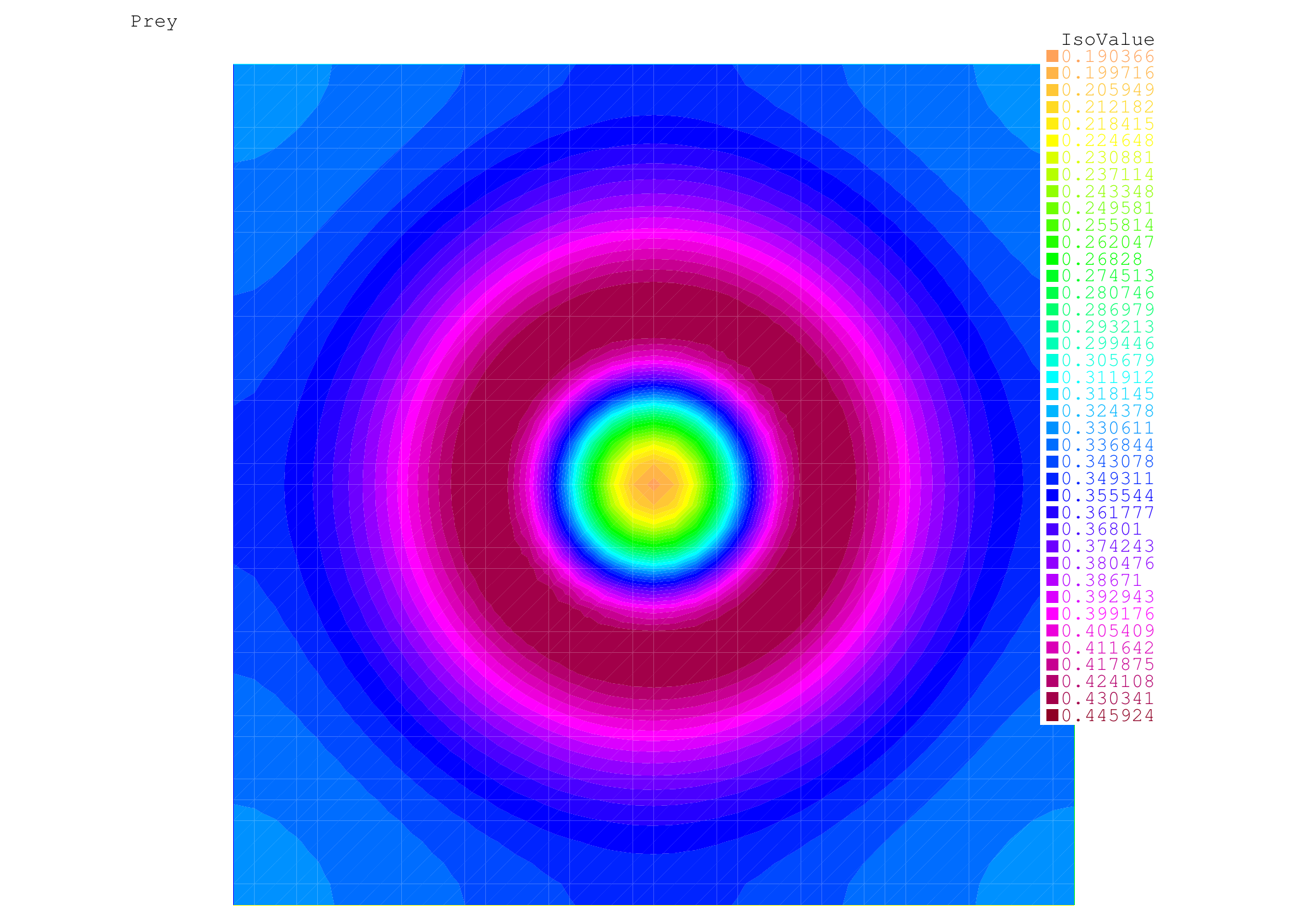}\includegraphics[height=3cm,width=4cm]{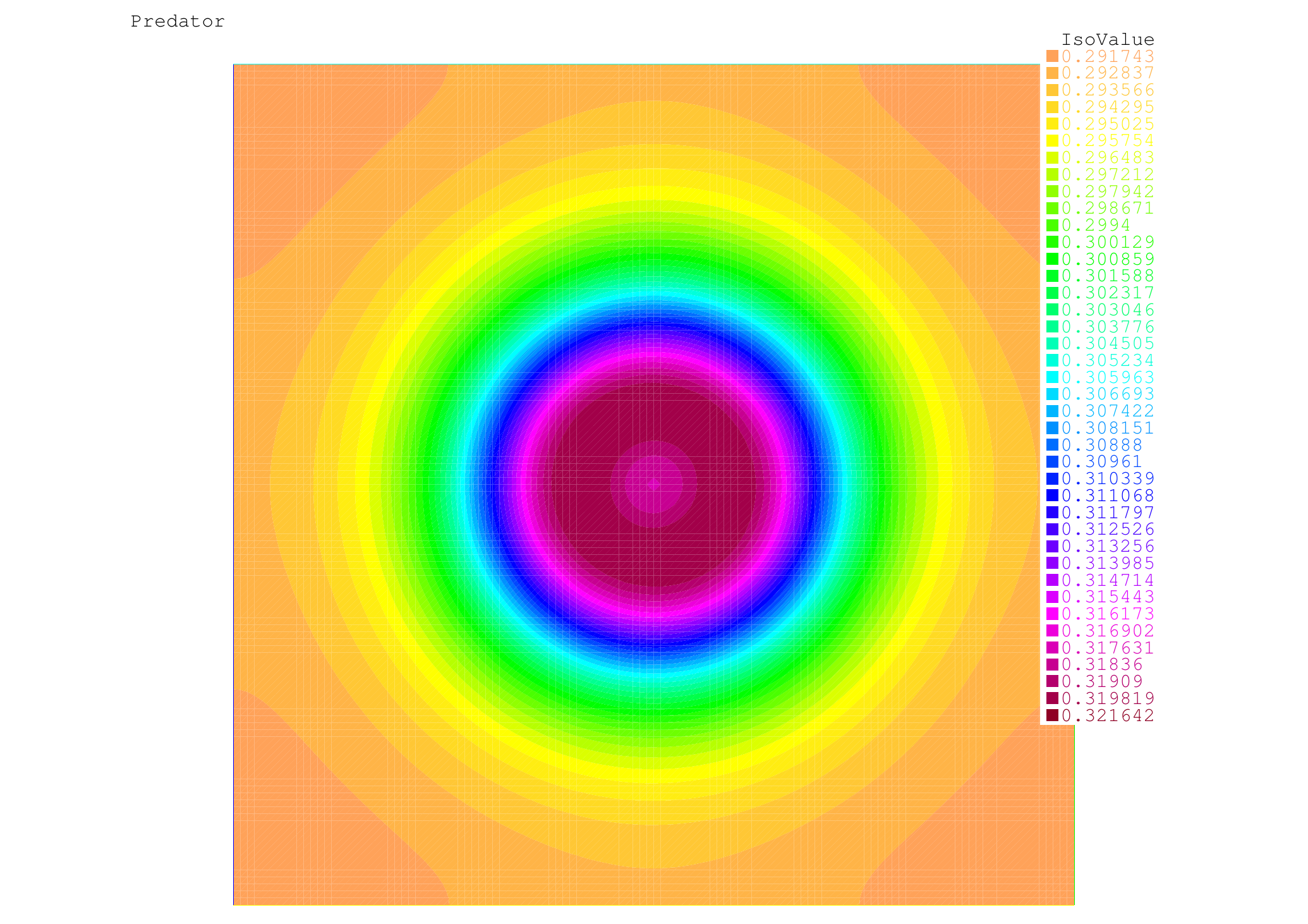} 	\includegraphics[height=3cm,width=4cm]{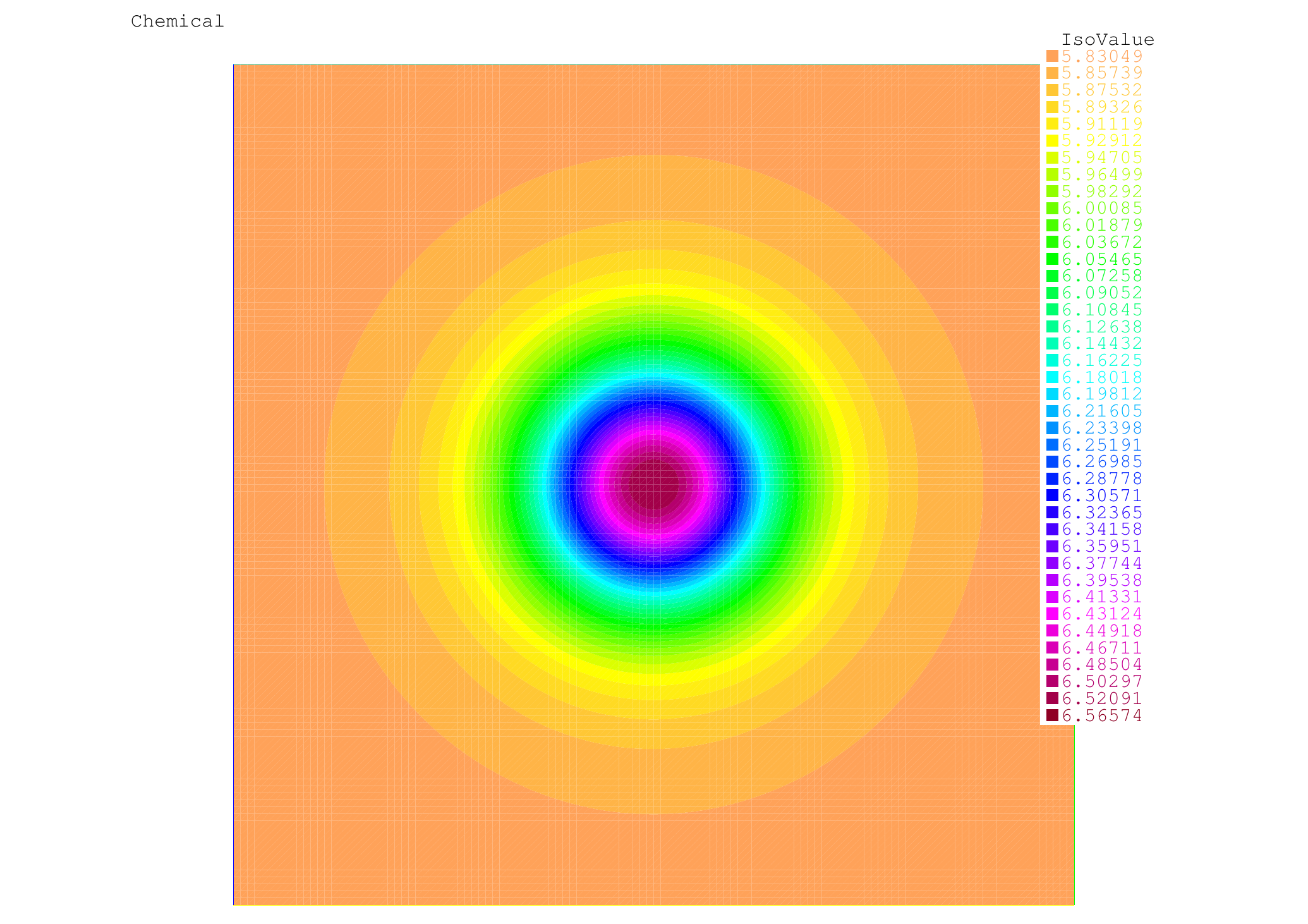}}\\
\subfloat[\label{chm2}]{\includegraphics[height=3cm,width=4cm]{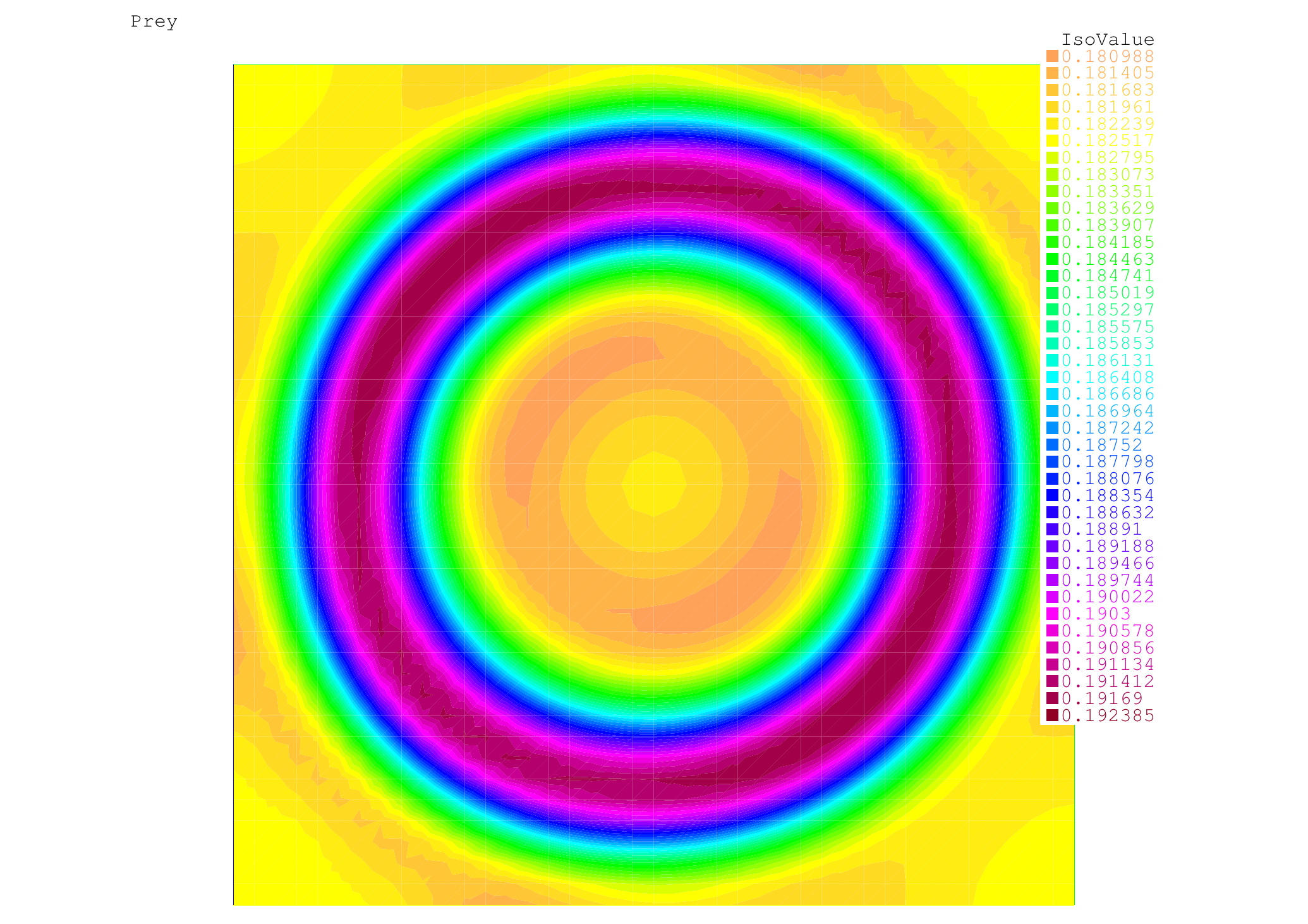}\includegraphics[height=3cm,width=4cm]{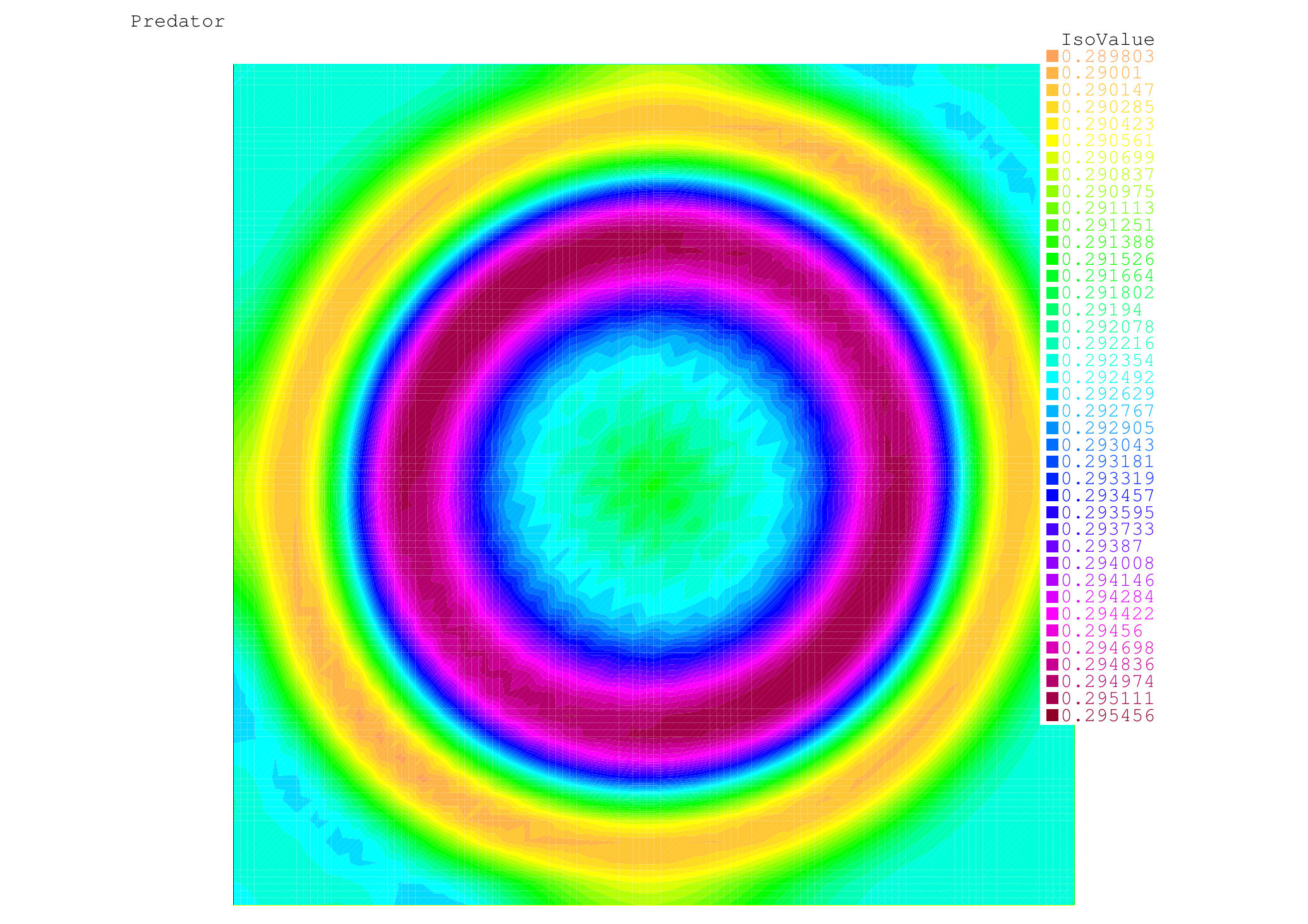} 	\includegraphics[height=3cm,width=4cm]{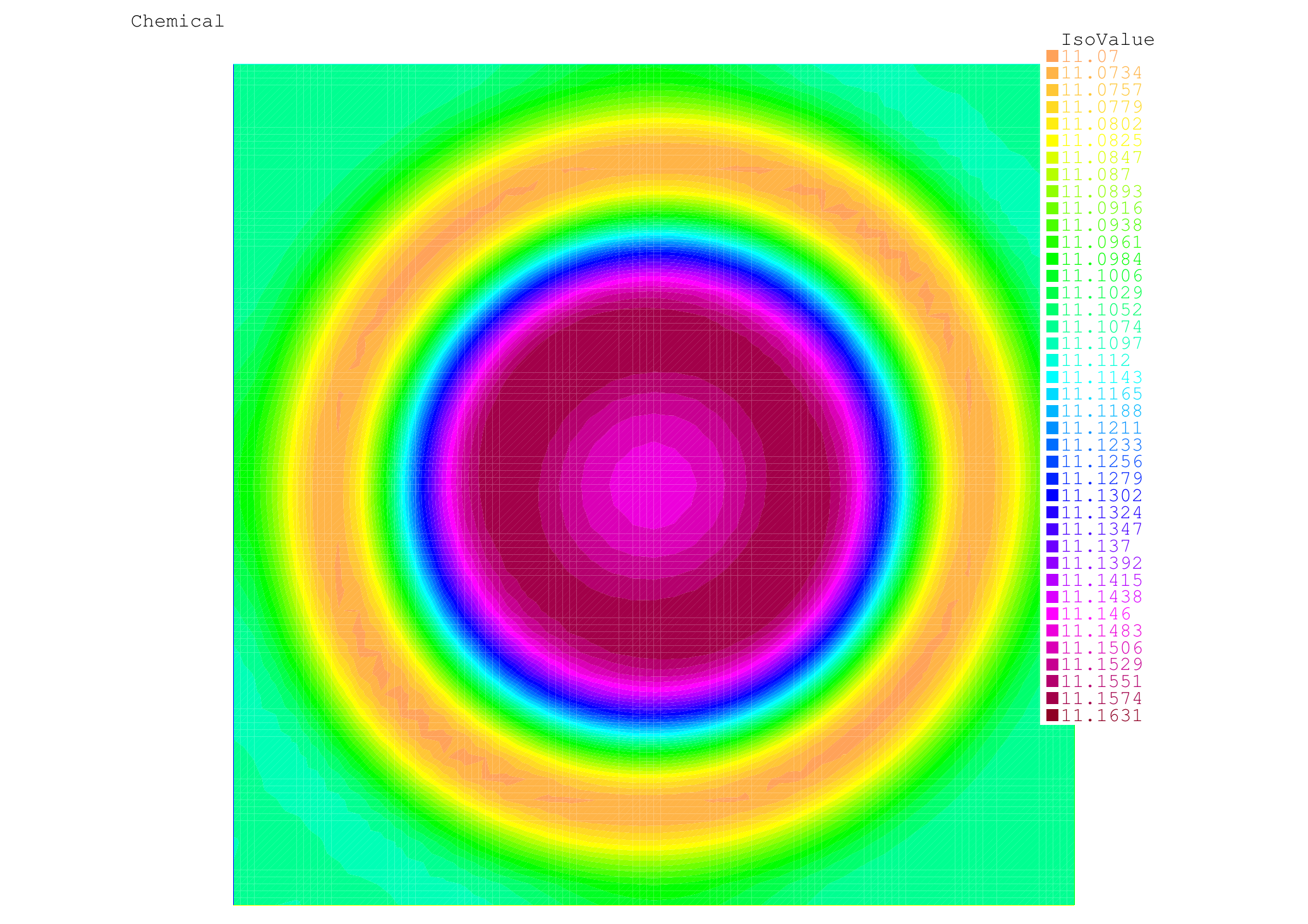} }\\
\subfloat[\label{chm3}]{\includegraphics[height=3cm,width=4cm]{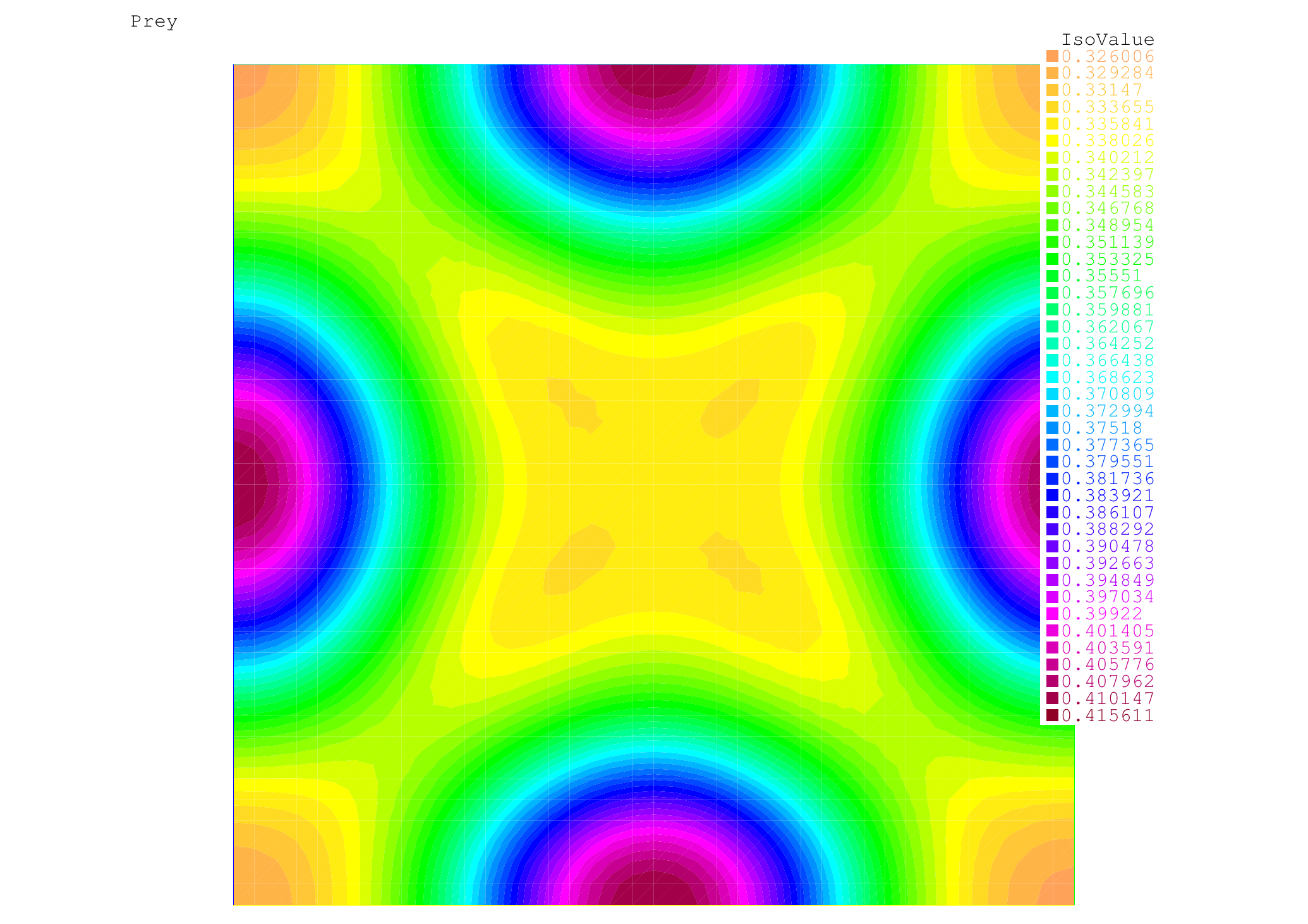}\includegraphics[height=3cm,width=4cm]{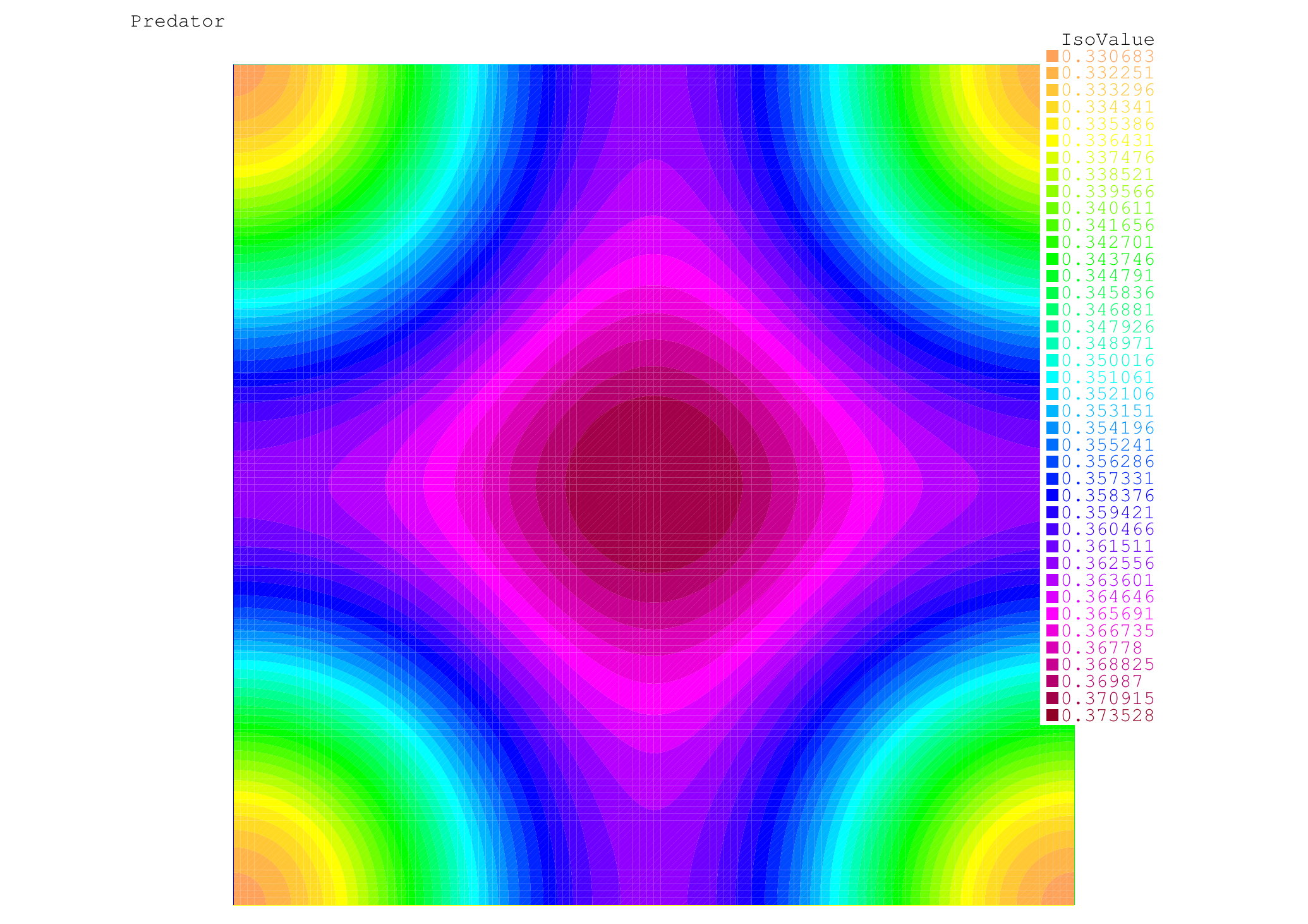} 	\includegraphics[height=3cm,width=4cm]{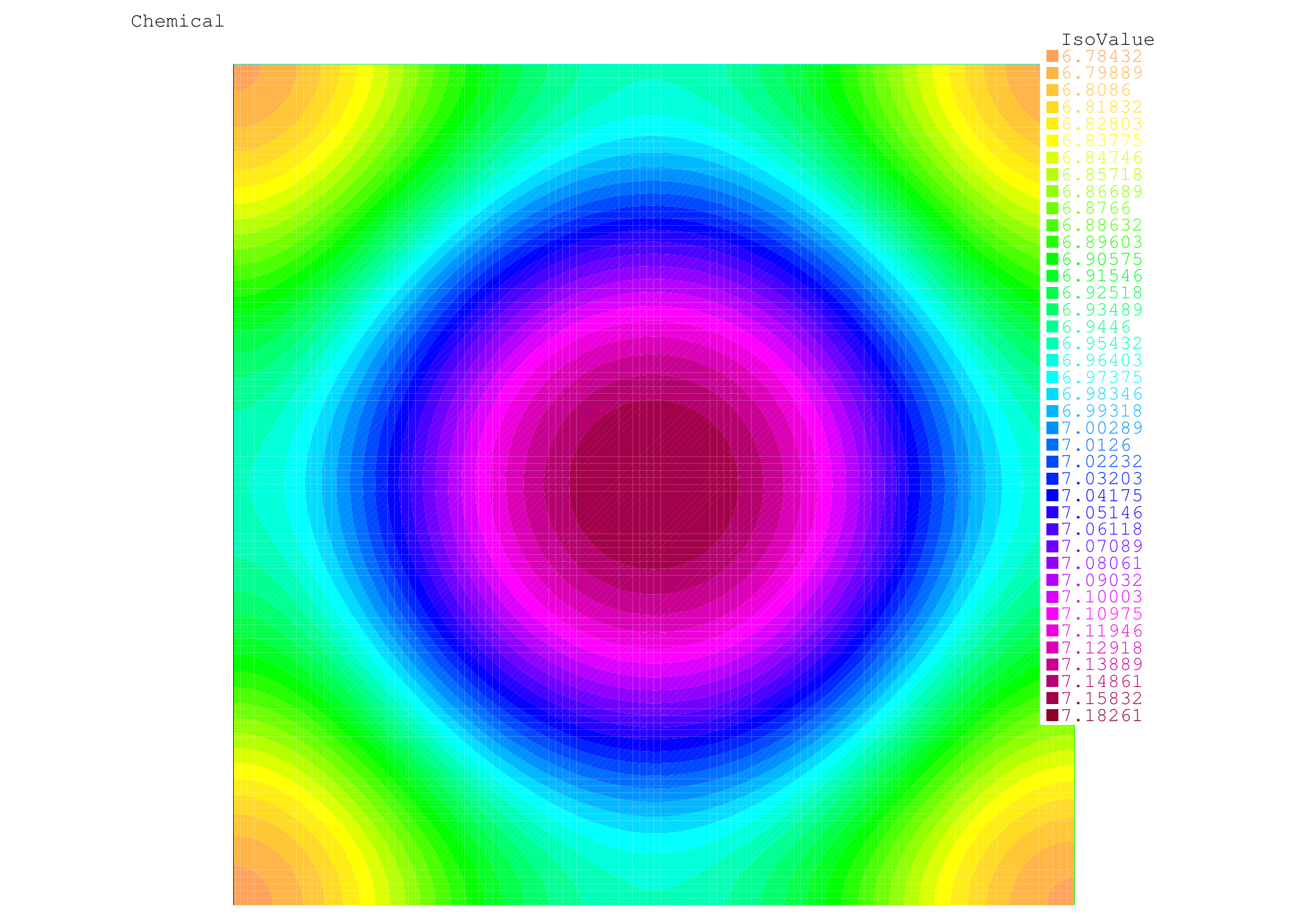}}\\
\subfloat[\label{chm4}]{\includegraphics[height=3cm,width=4cm]{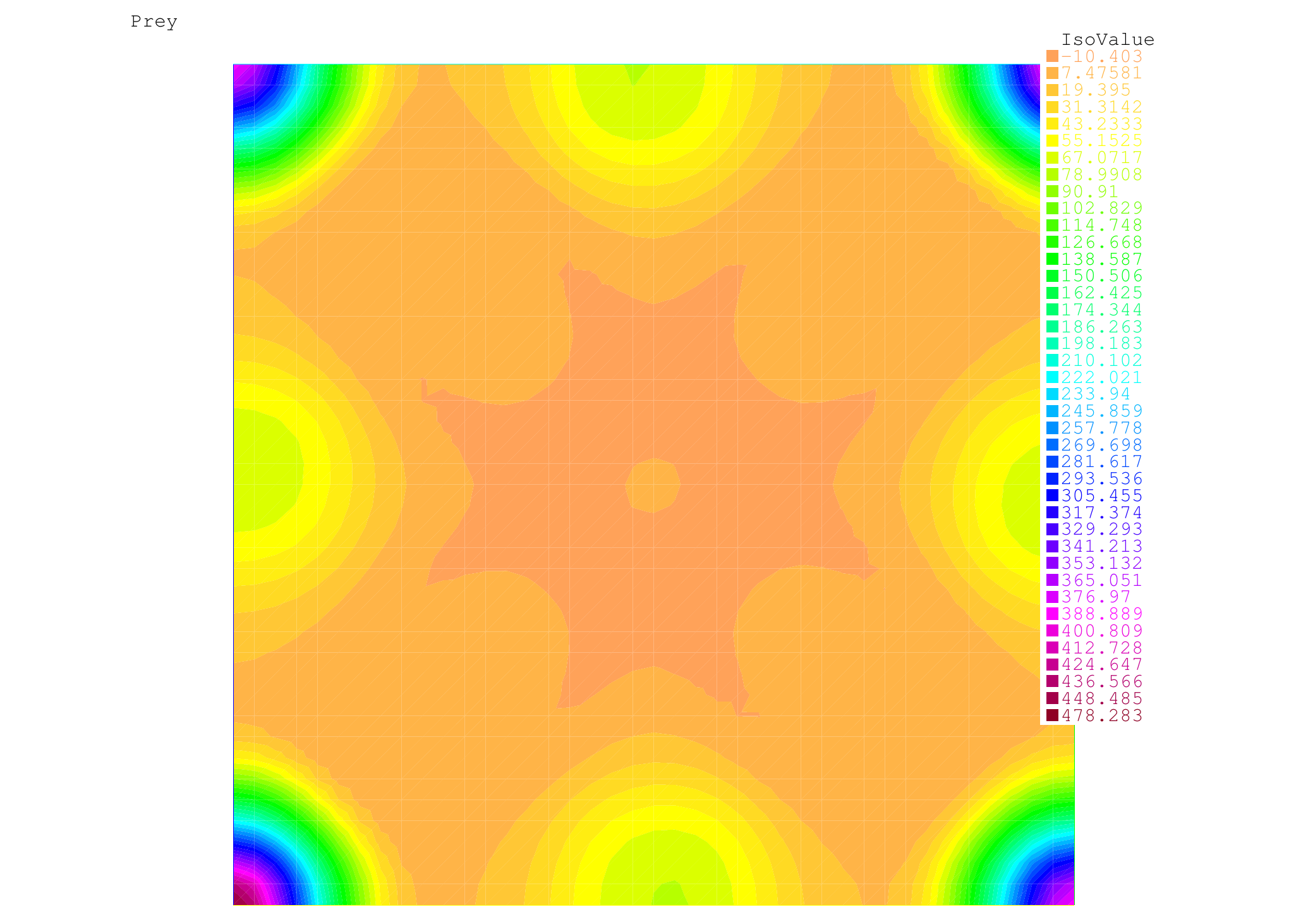}\includegraphics[height=3cm,width=4cm]{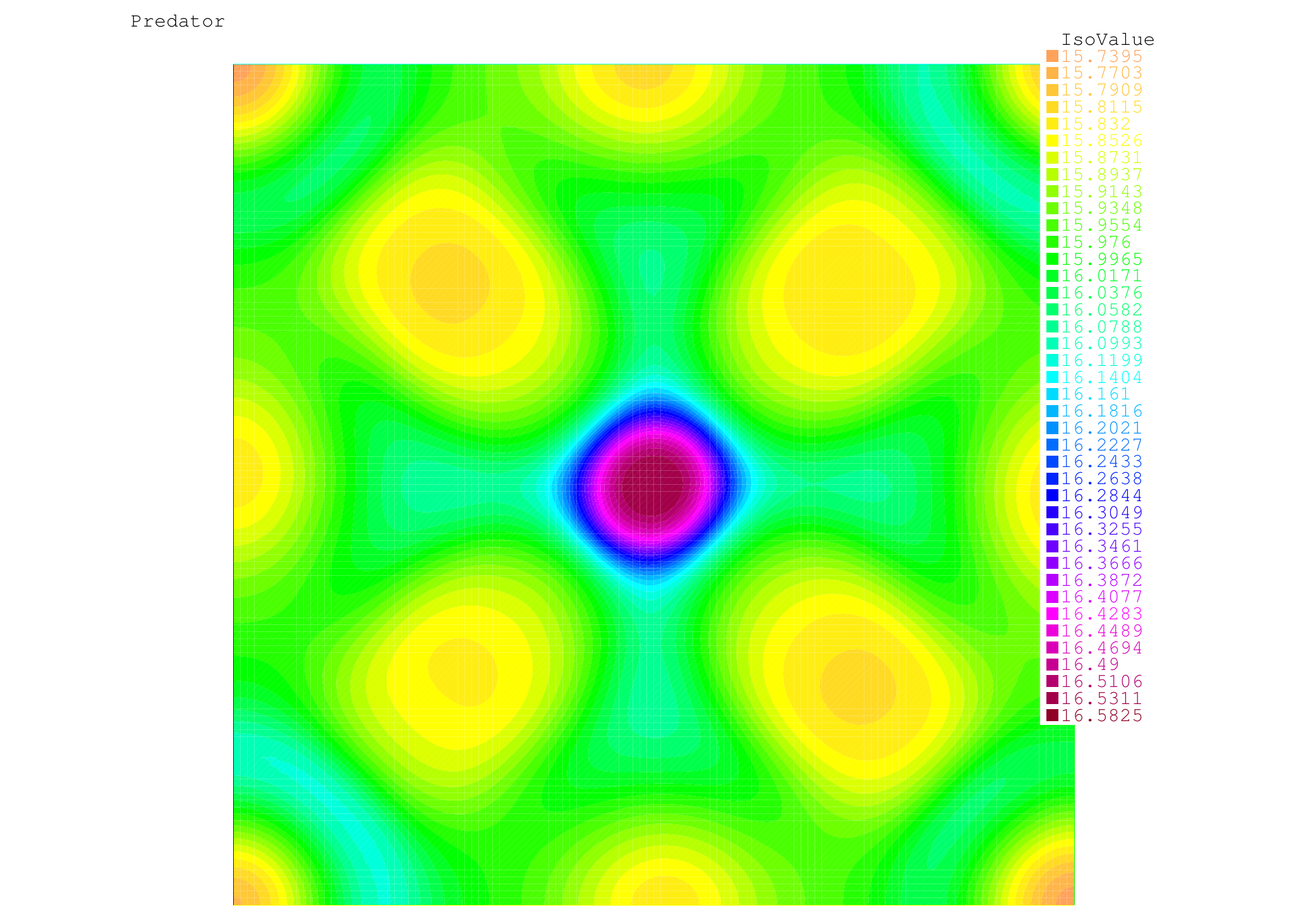} 	\includegraphics[height=3cm,width=4cm]{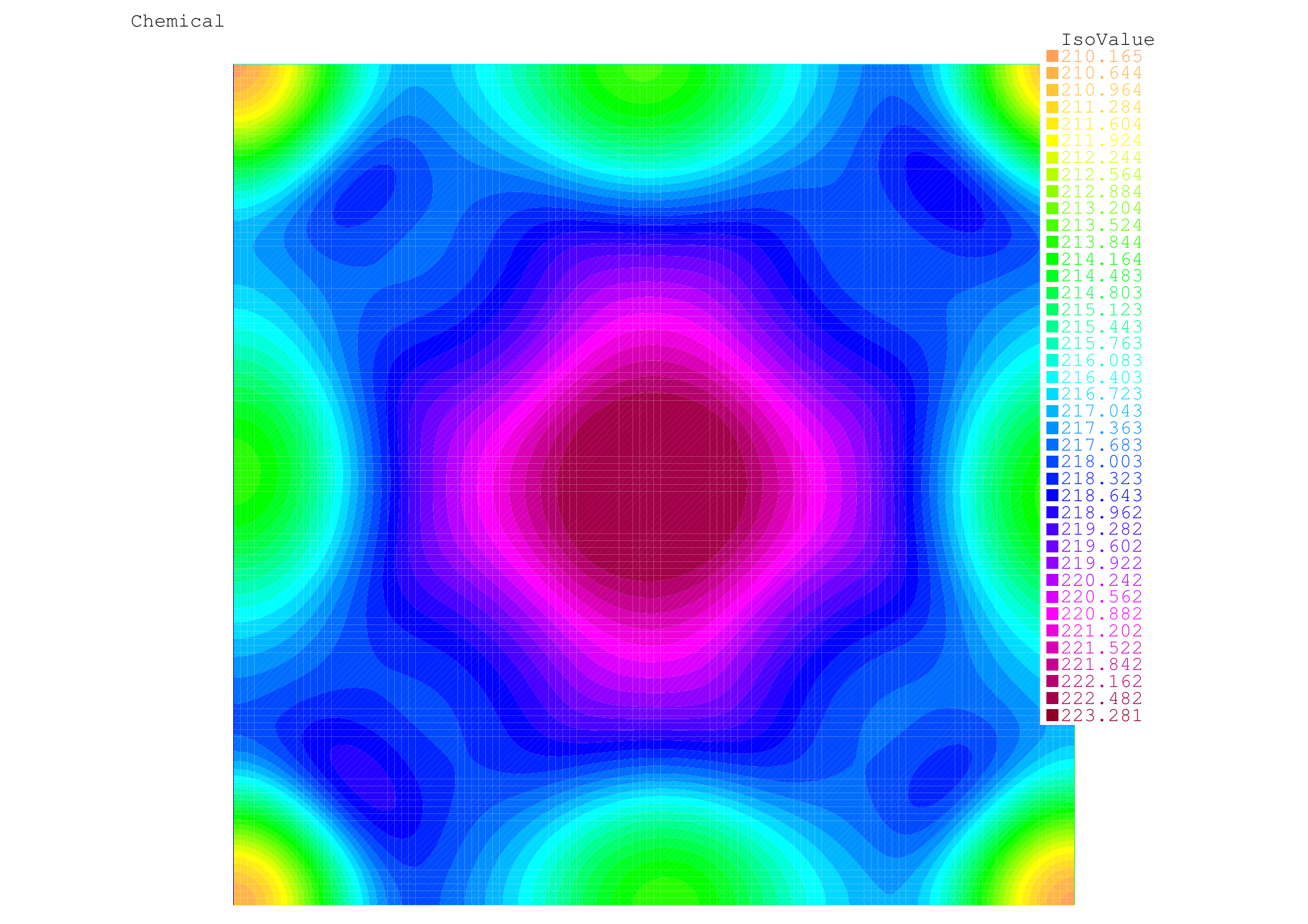}}\\
\subfloat[\label{chm5}]{\includegraphics[height=3cm,width=4cm]{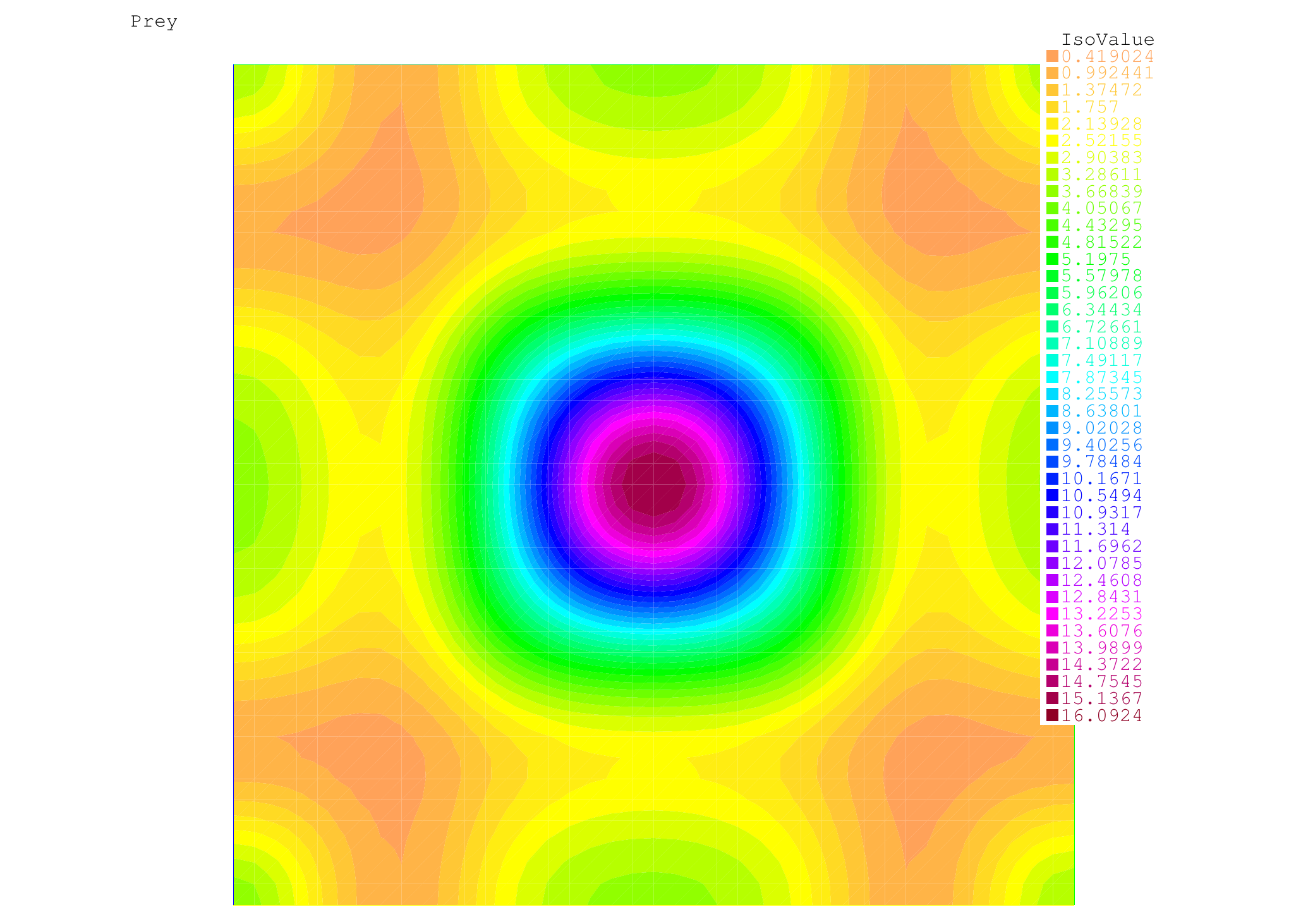}\includegraphics[height=3cm,width=4cm]{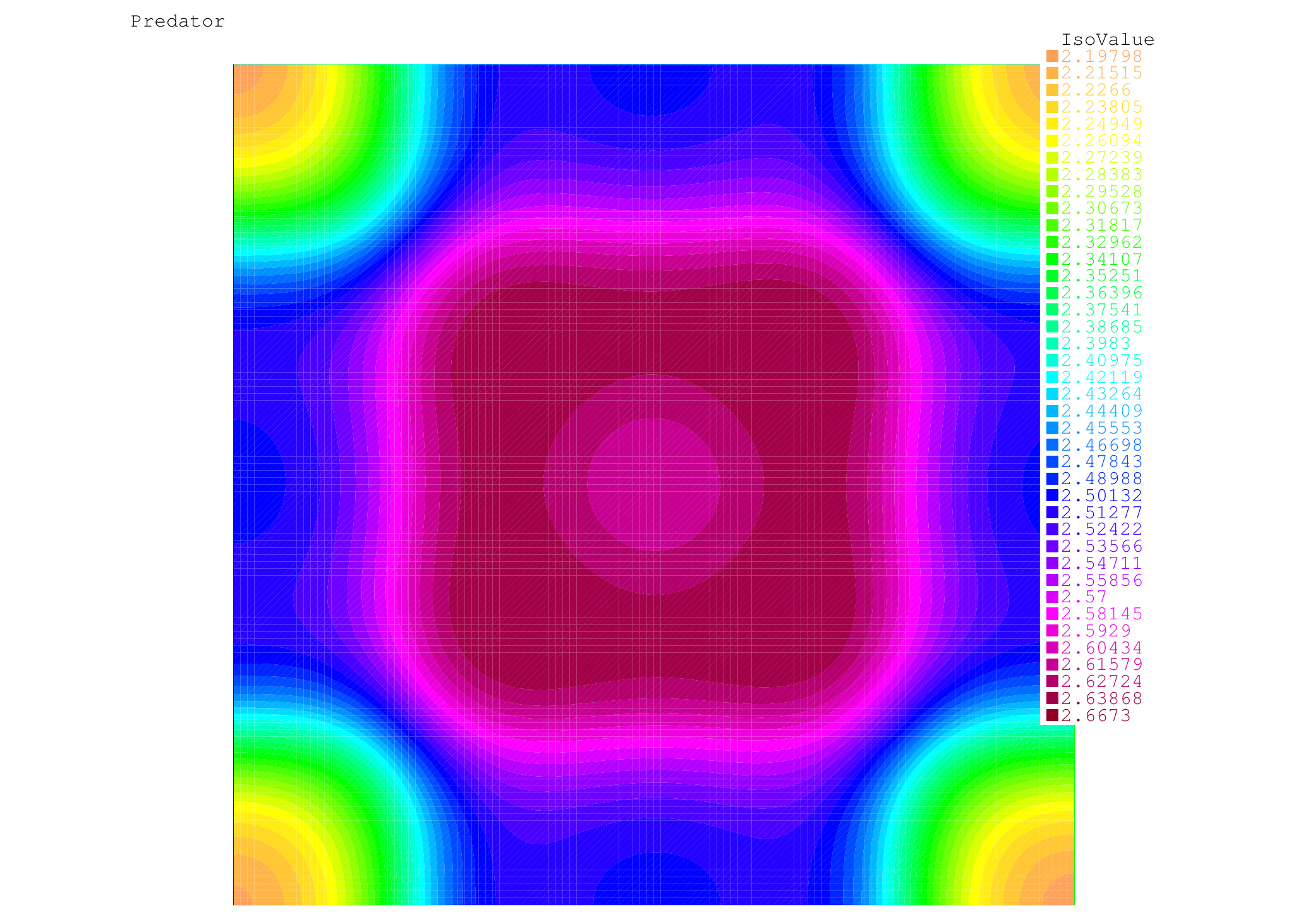} 	\includegraphics[height=3cm,width=4cm]{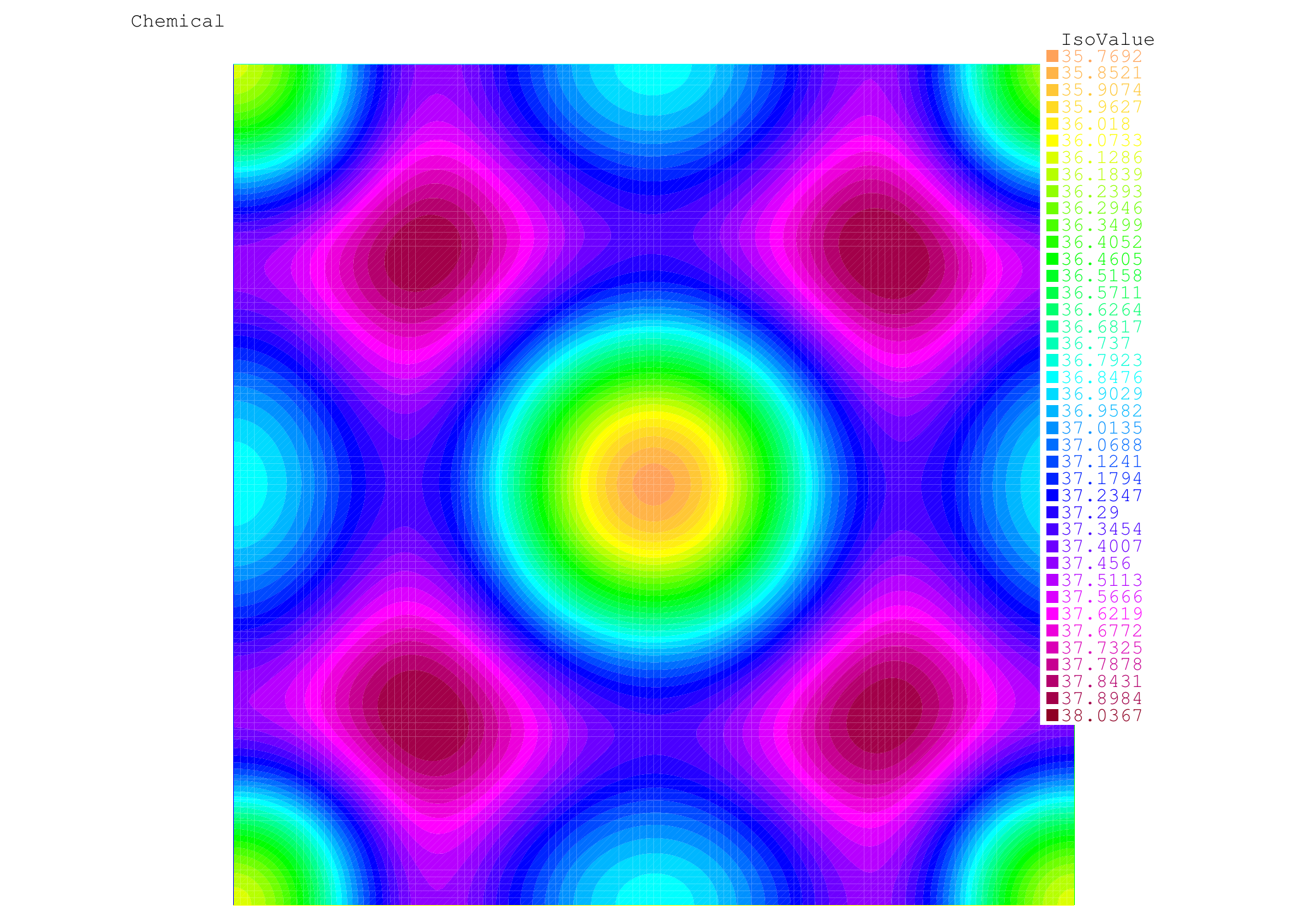}}
\caption{Model B: time dependent spatial patterns for the prey(left column), predator   and chemical when  chemo-repulsion is relatively large i.e $\chi=5$ with remaining parameters the same as in (\ref{para1})  and  Gaussian initial data for predator centered in the middle  the square with constant initial data for the prey $N=\bar{N}$ and for the chemical $W=\bar{W}$ at time steps (a)  $t=10$ (b) $t=500$ (c) $t=700$ (e) $t=1000$}
\label{chemo}
\end{figure}
\begin{figure}[hbt!] 
	\center
	\begin{subfigure}{0.3\textwidth}
		\includegraphics[width=\textwidth]{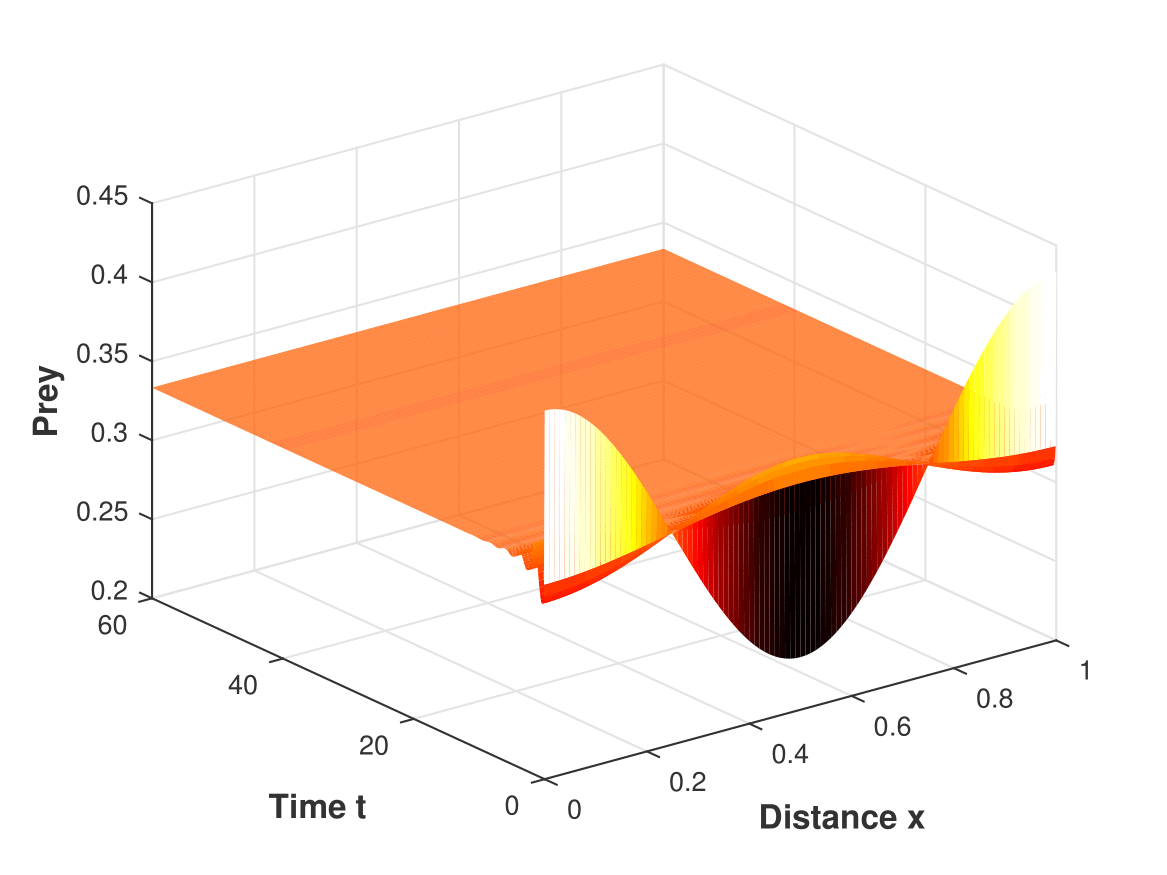} 
		\caption{}
		\label{fig91}
	\end{subfigure}
	\begin{subfigure}{0.3\textwidth} 
		\includegraphics[width=\textwidth]{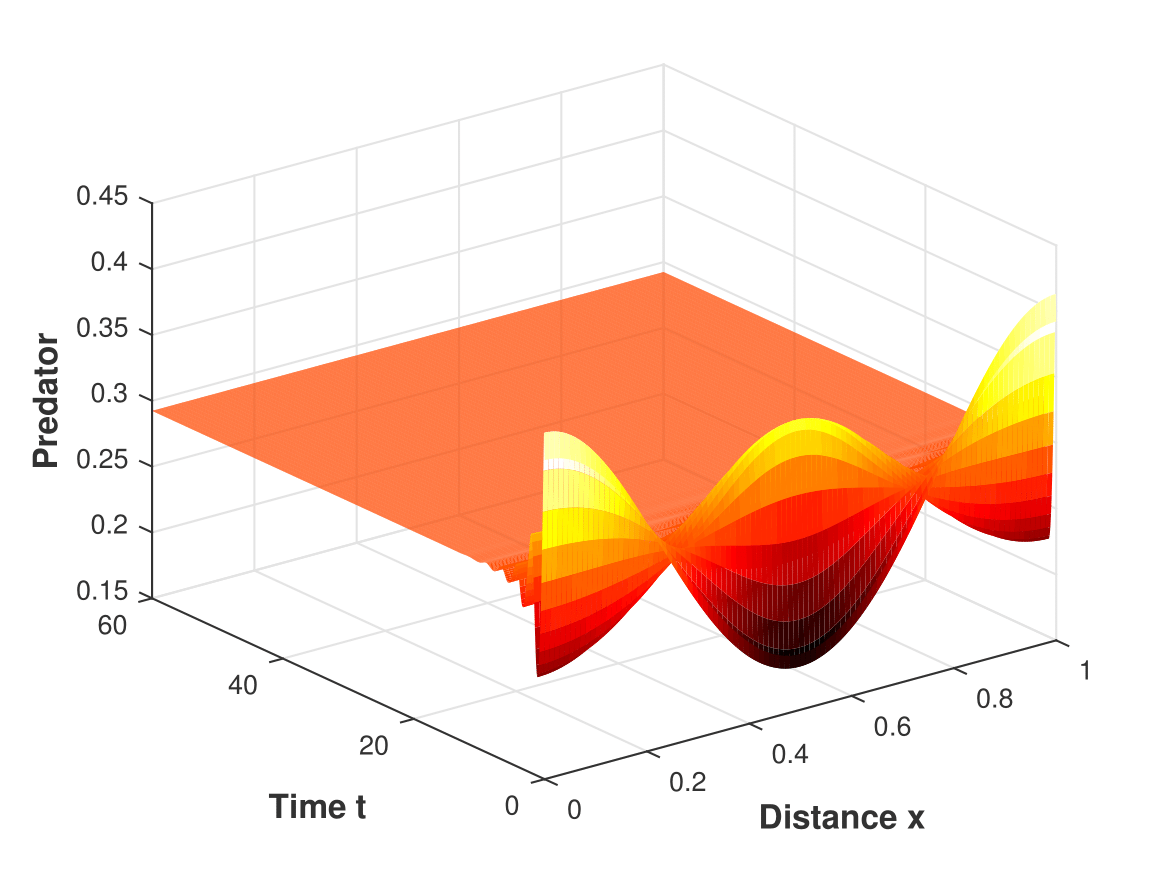}   
		\caption{}
		\label{fig92}
	\end{subfigure} 
	\begin{subfigure}{0.3\textwidth} 
		\includegraphics[width=\textwidth]{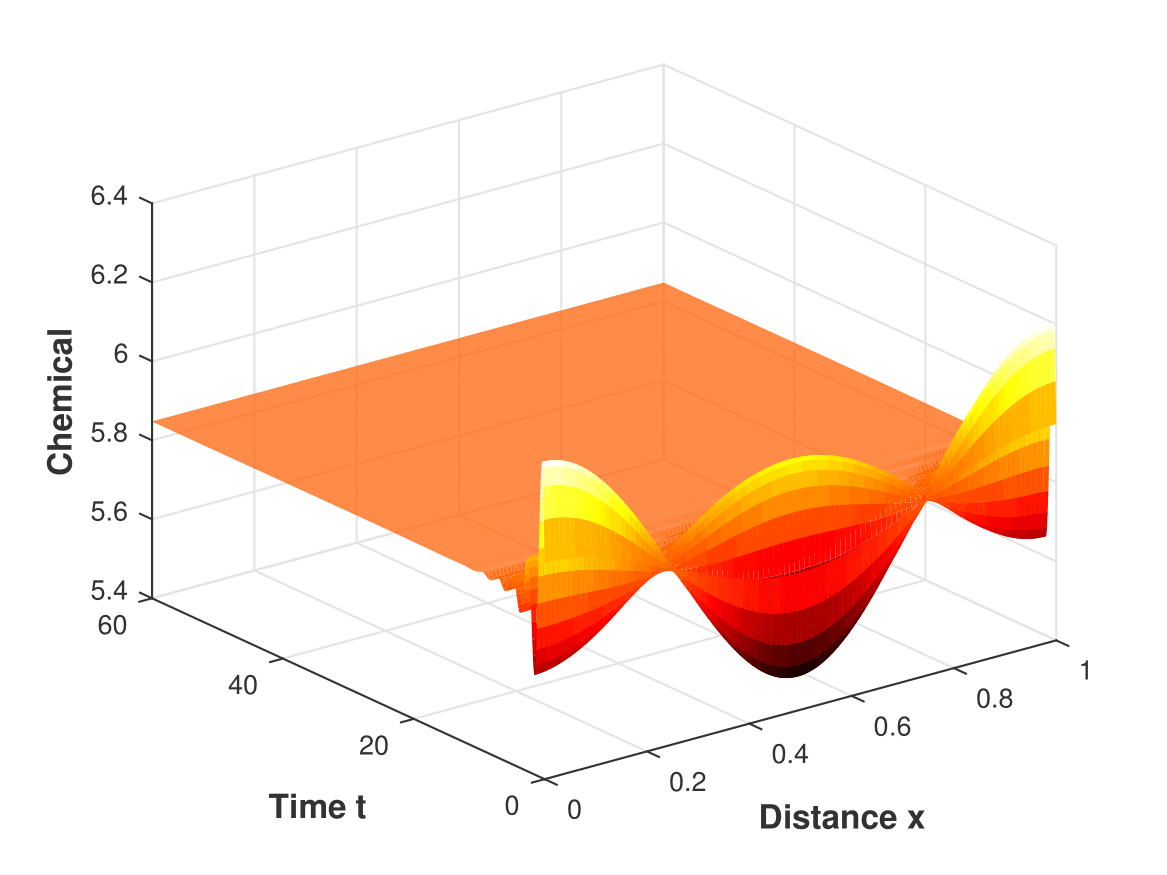}  
		\caption{}
		\label{fig93}
	\end{subfigure}  
	\caption{Model A: spatial perturbation converges to constant steady state $\bar{E}$ for $\chi=0.2$ and $\xi=1$ and other parameters are as in (\ref{para1}).}
	\label{fig9}
\end{figure}
\begin{figure}[hbt!]  
\centering 
\subfloat[\label{fig991}]{\includegraphics[height=4cm,width=4cm]{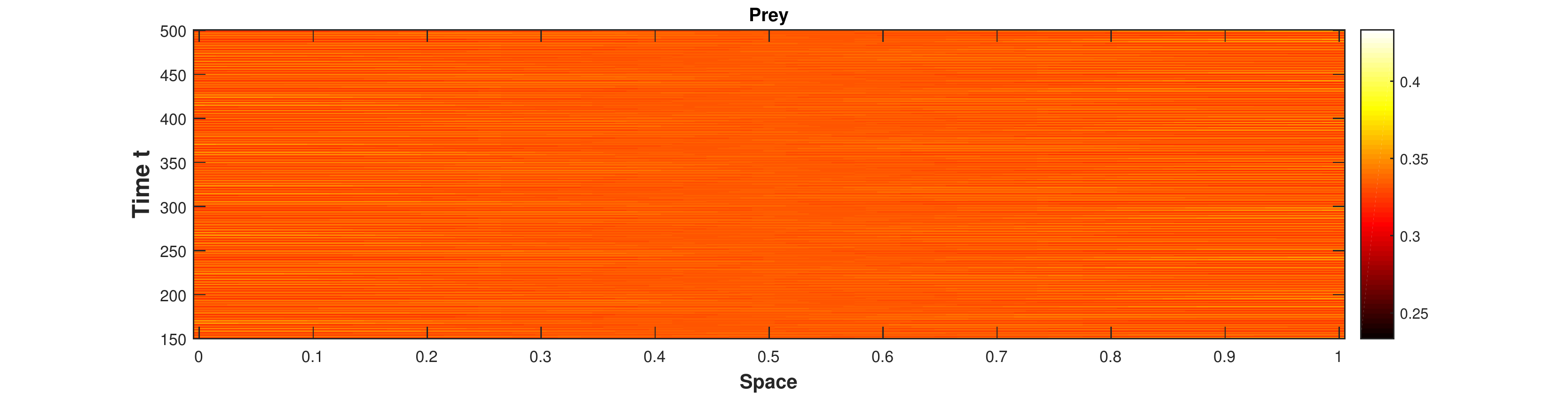}}
\subfloat[\label{fig992}]{\includegraphics[height=4cm,width=4cm]{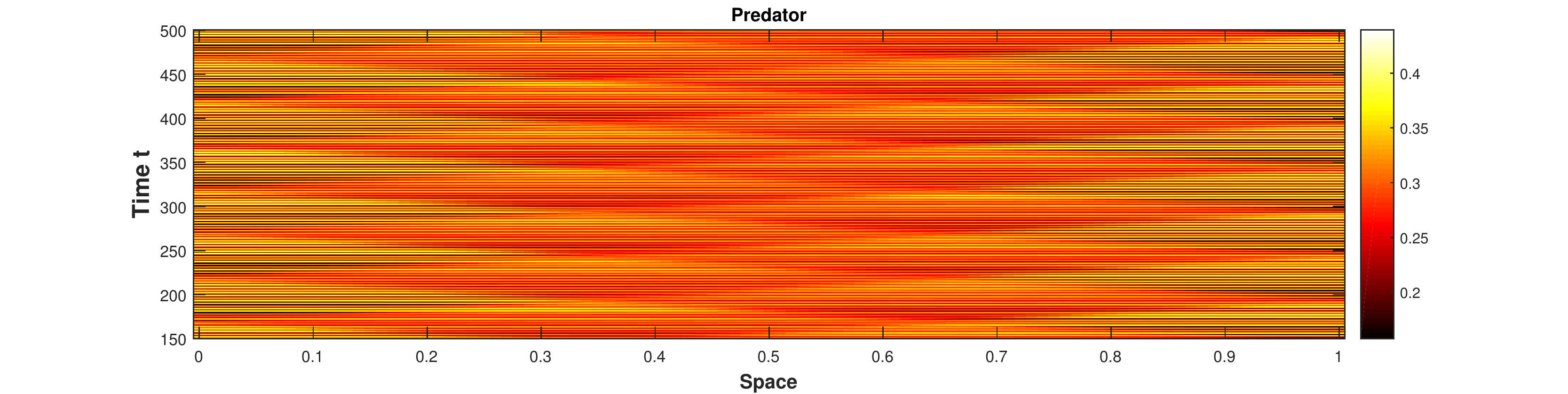}}
\subfloat[\label{fig993}]{\includegraphics[height=4cm,width=4cm]{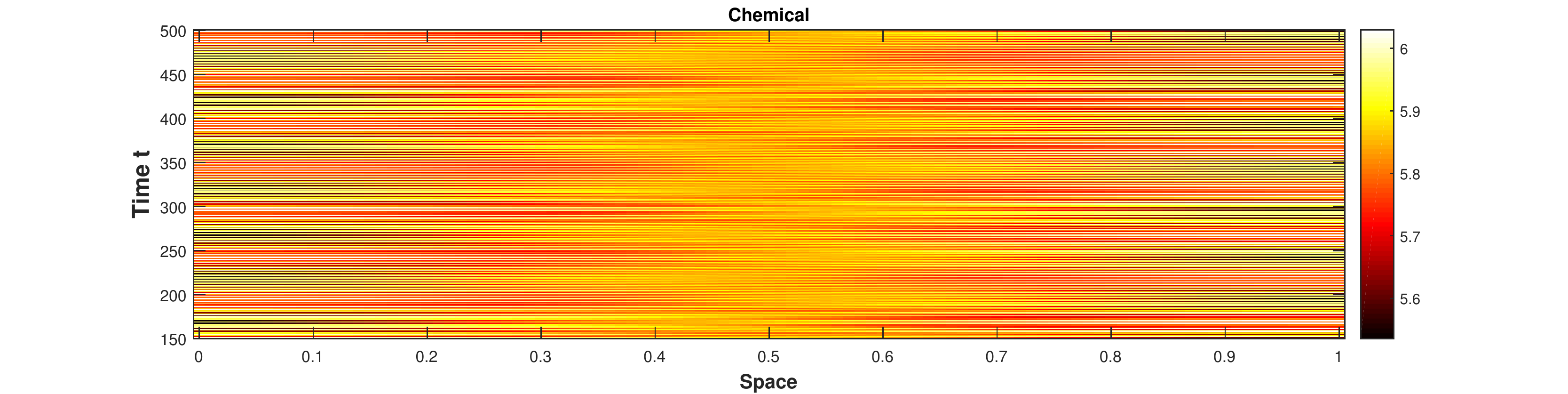}}
\caption{Model A: space-time patterns when chemo-repulsion is weaker than prey-taxis (i.e. $\chi<\xi$)  $\chi=0.2,\ \xi=10>\xi^S$  for symmetrical initial data (\ref{idata}) with $j=4$.}
\label{fig99}
\end{figure}
\begin{figure}[hbt!]  
\centering 
\subfloat[\label{fig101}]{\includegraphics[height=4cm,width=4cm]{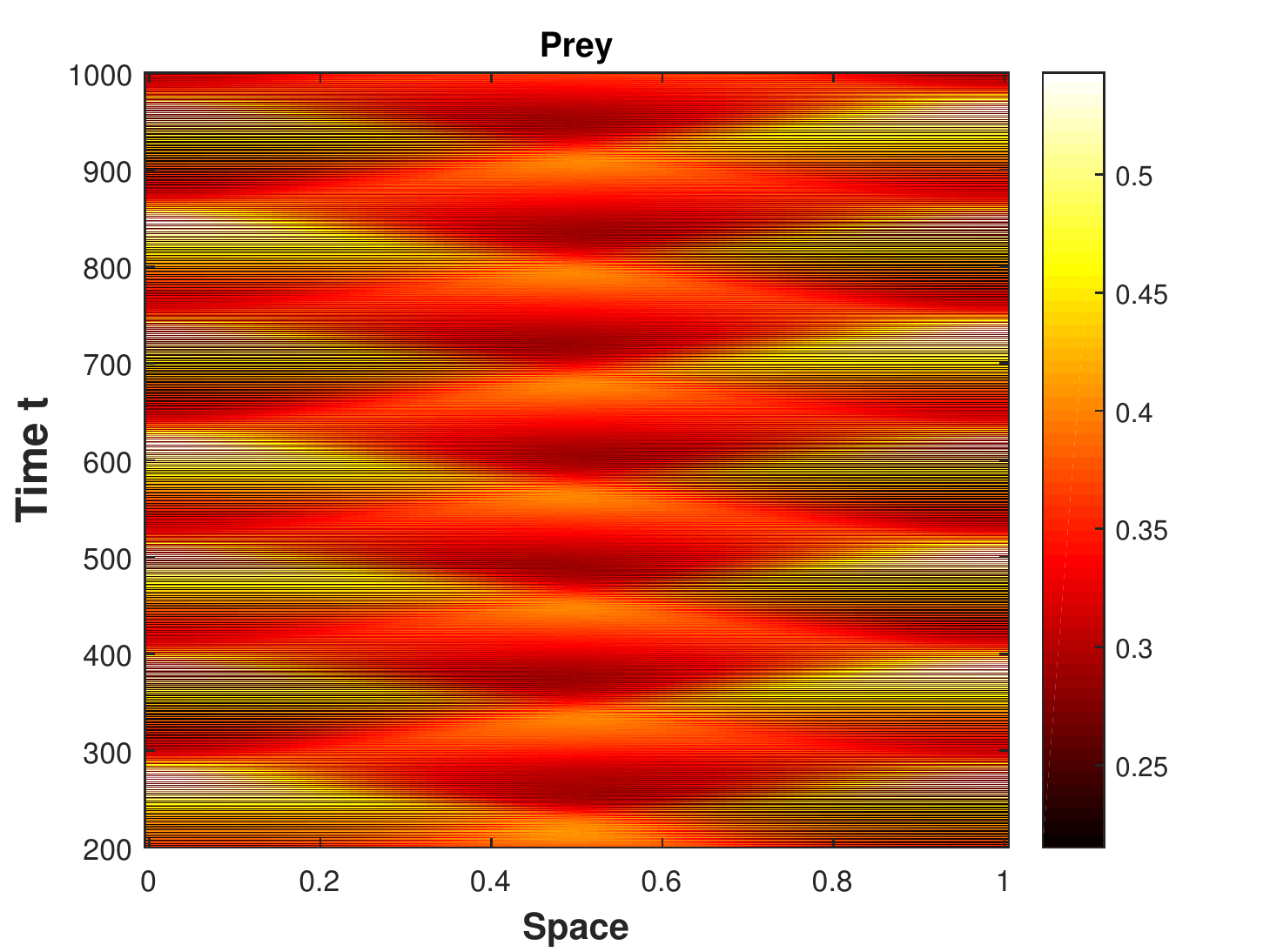}}
\subfloat[\label{fig102}]{\includegraphics[height=4cm,width=4cm]{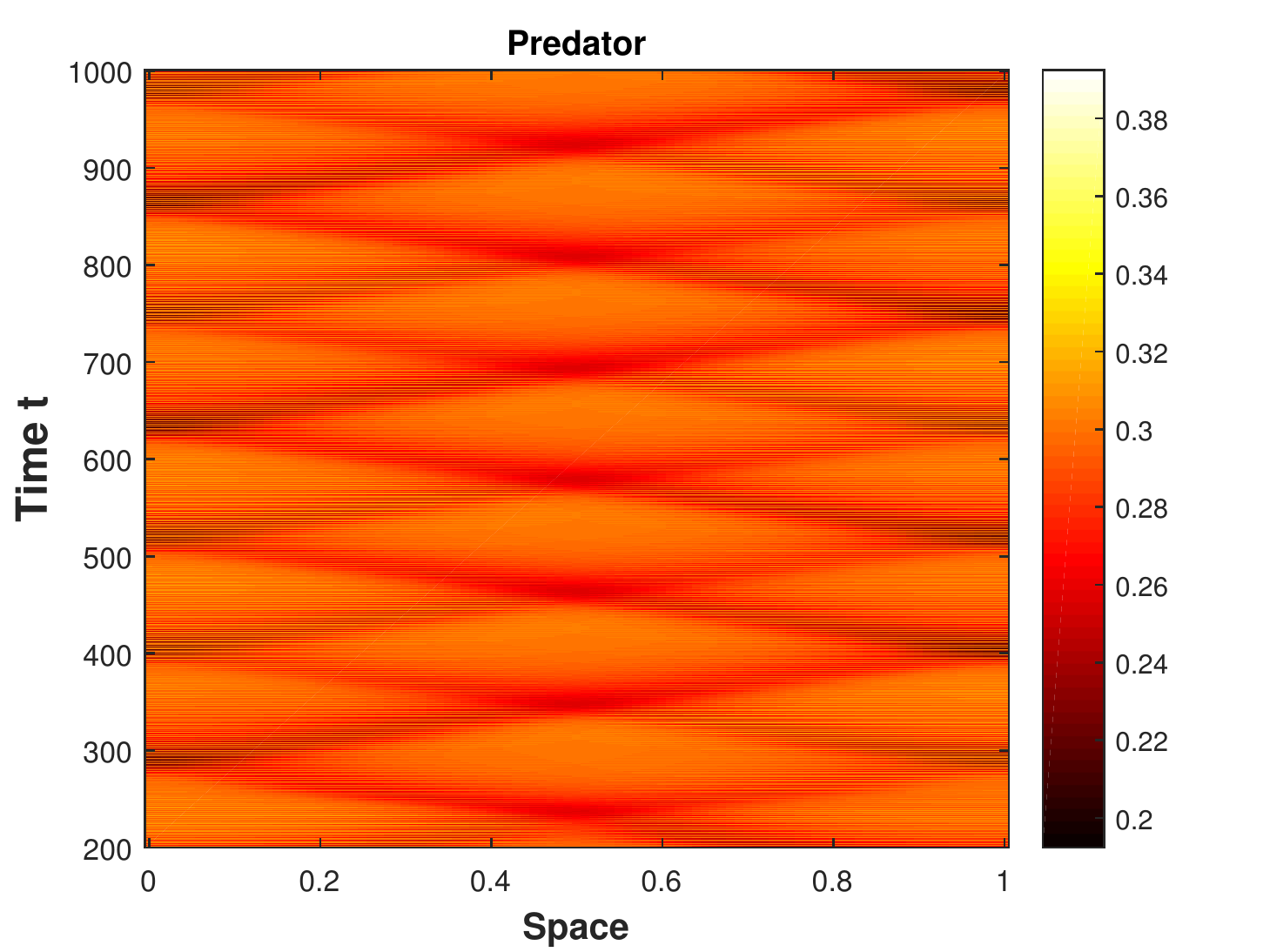}}
\subfloat[\label{fig103}]{\includegraphics[height=4cm,width=4cm]{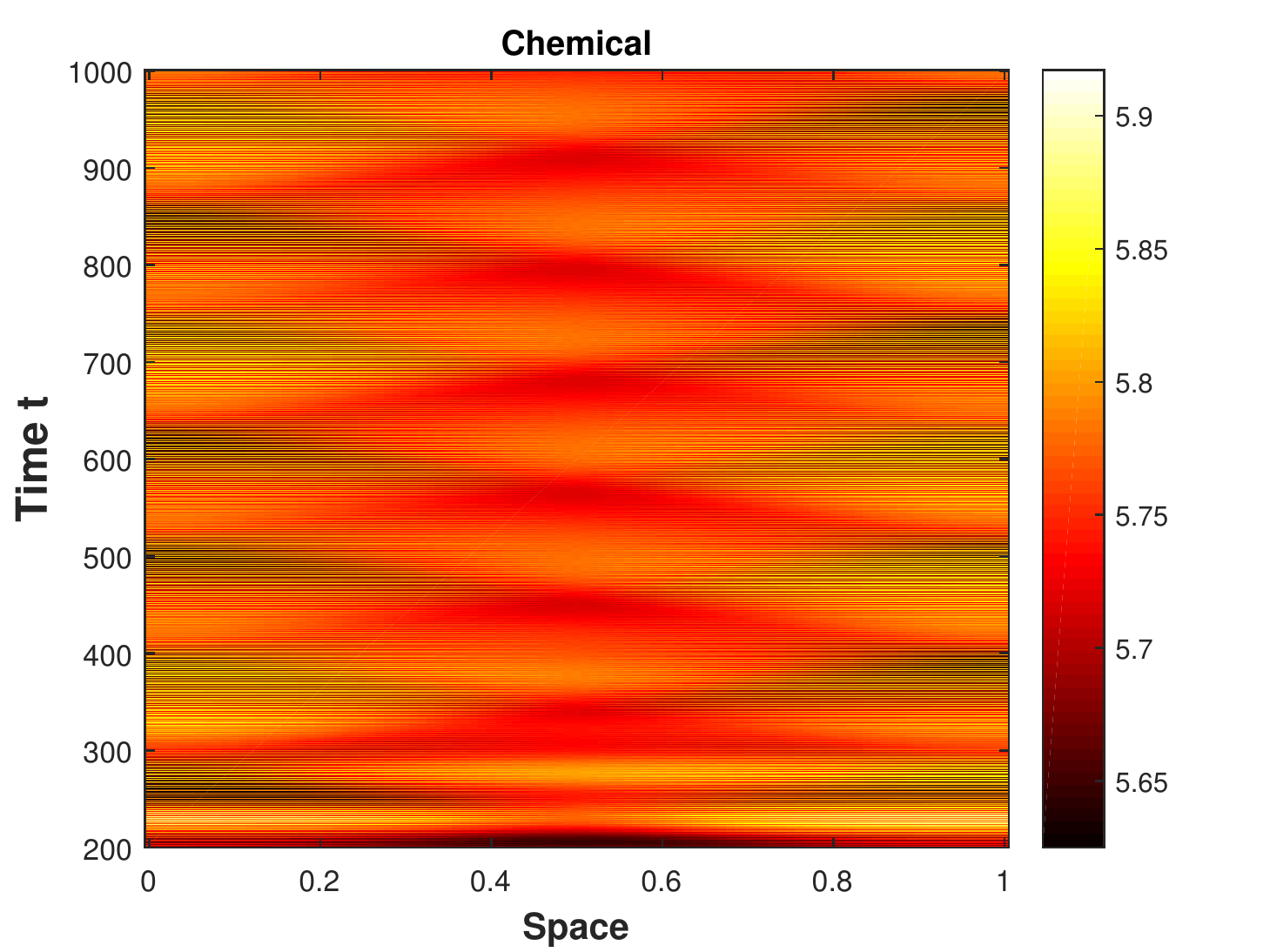}}
\caption{Model A: space-time patterns when chemo-repulsion is stronger than prey-taxis (i.e. $\chi>\xi$)  $\chi=5,\ \xi=0.2$  for symmetrical initial data (\ref{idata}) with $j=4$ and $L=1$.}
\label{fig10}
\end{figure}

\begin{figure}[hbt!]  
\centering 
\subfloat[\label{fig131}]{\includegraphics[width=0.35\textwidth]{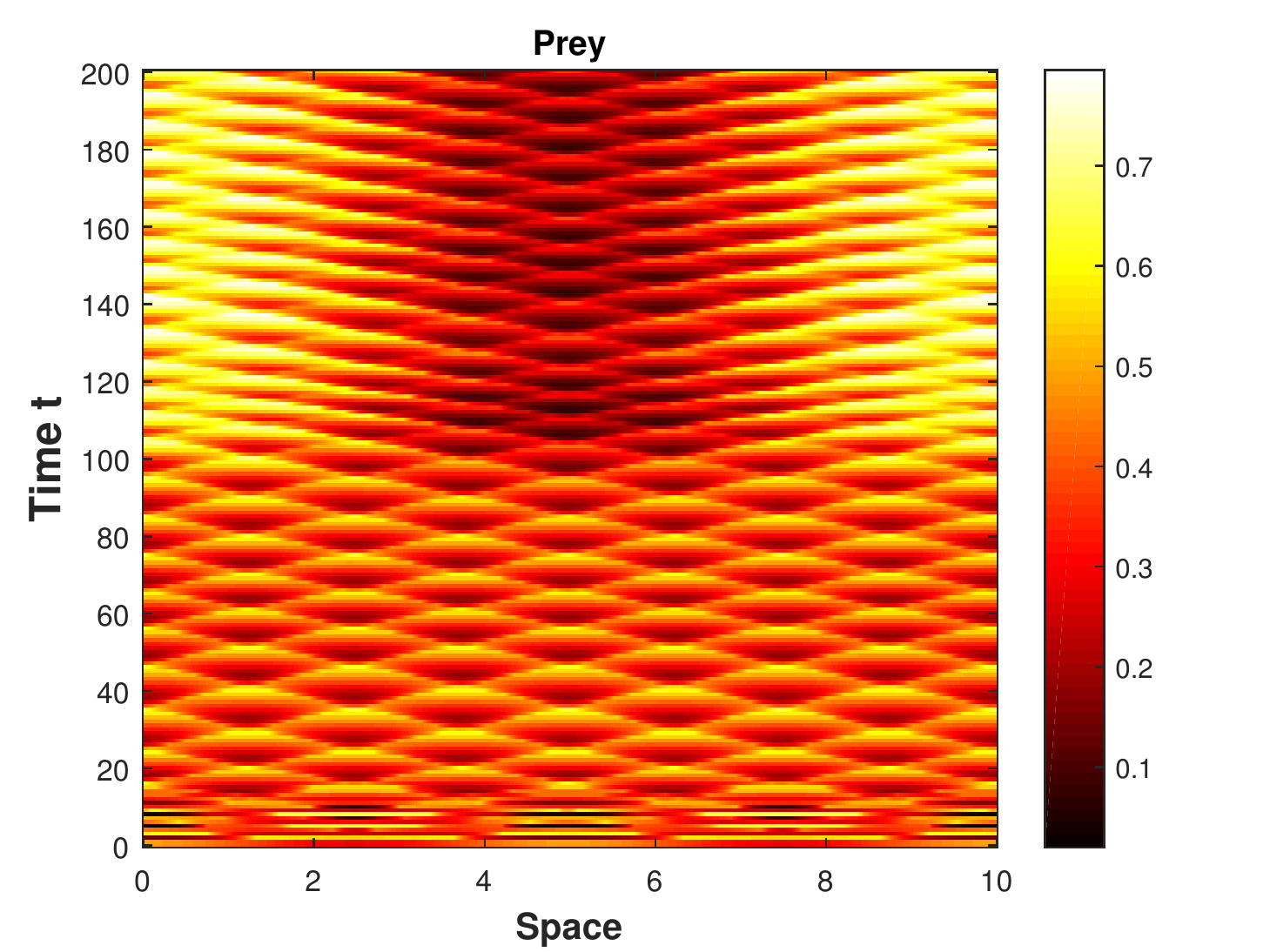}\includegraphics[width=0.35\textwidth]{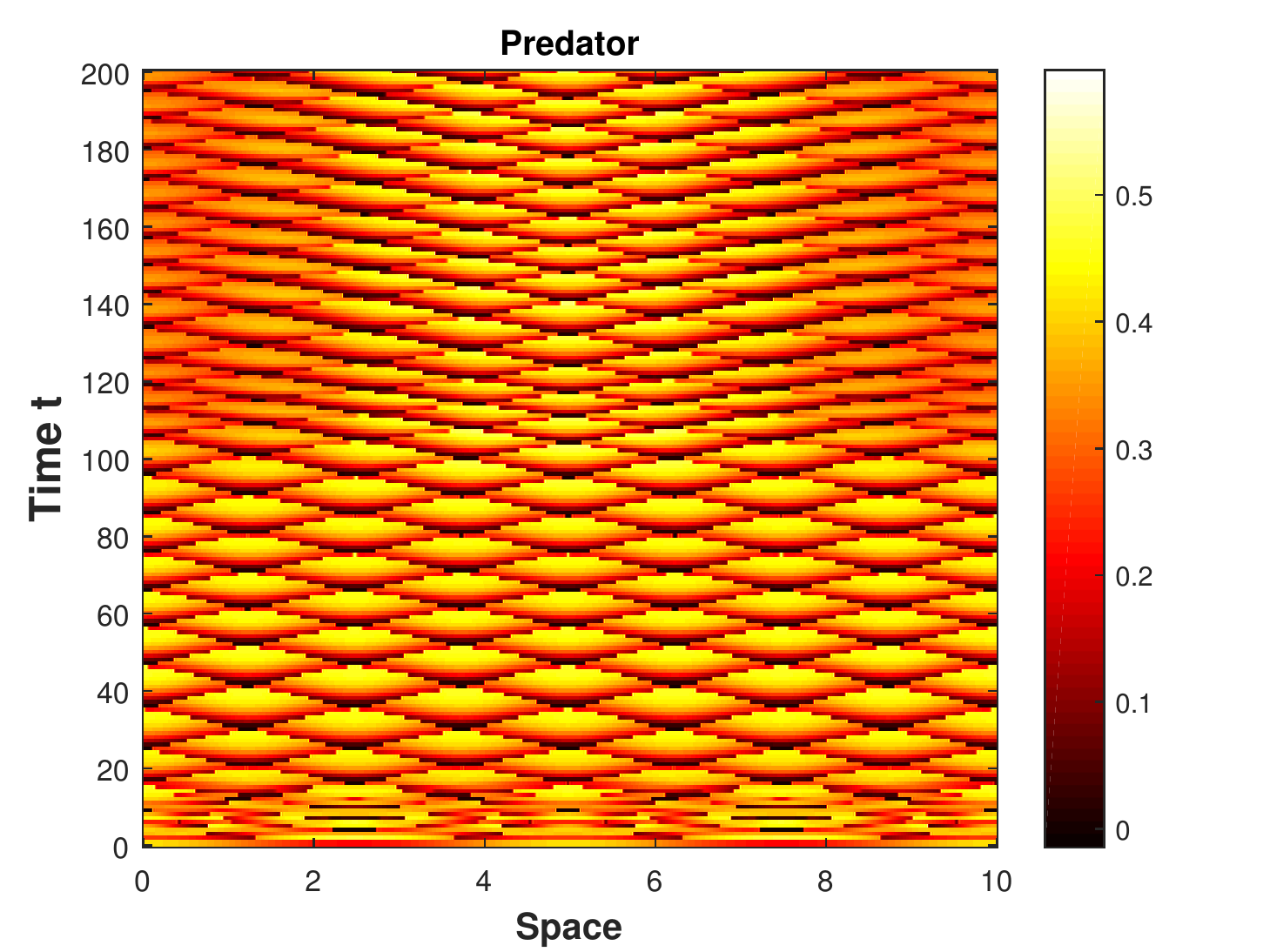}}\\
\subfloat[\label{fig132}]{\includegraphics[width=0.35\textwidth]{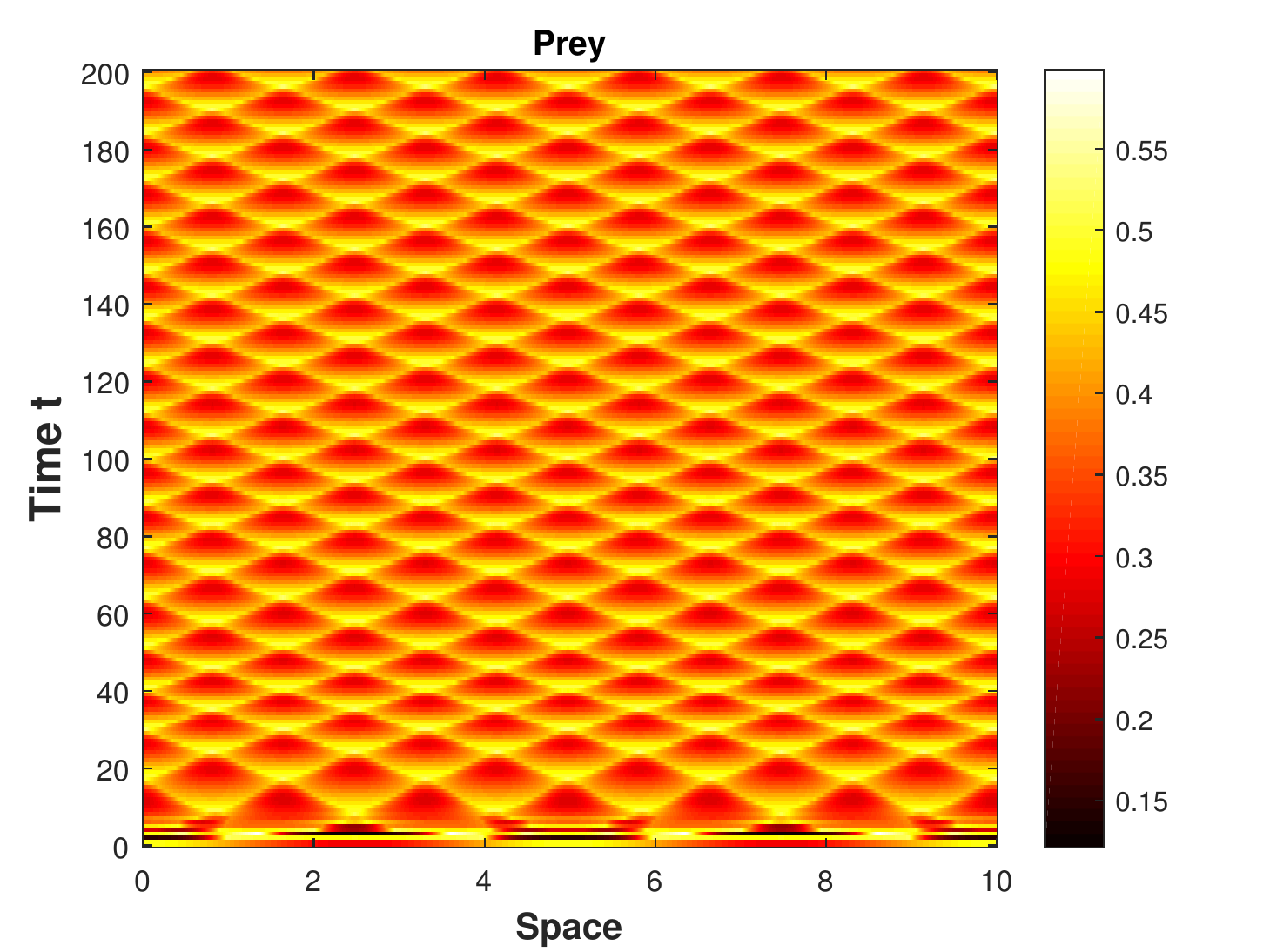}\includegraphics[width=0.35\textwidth]{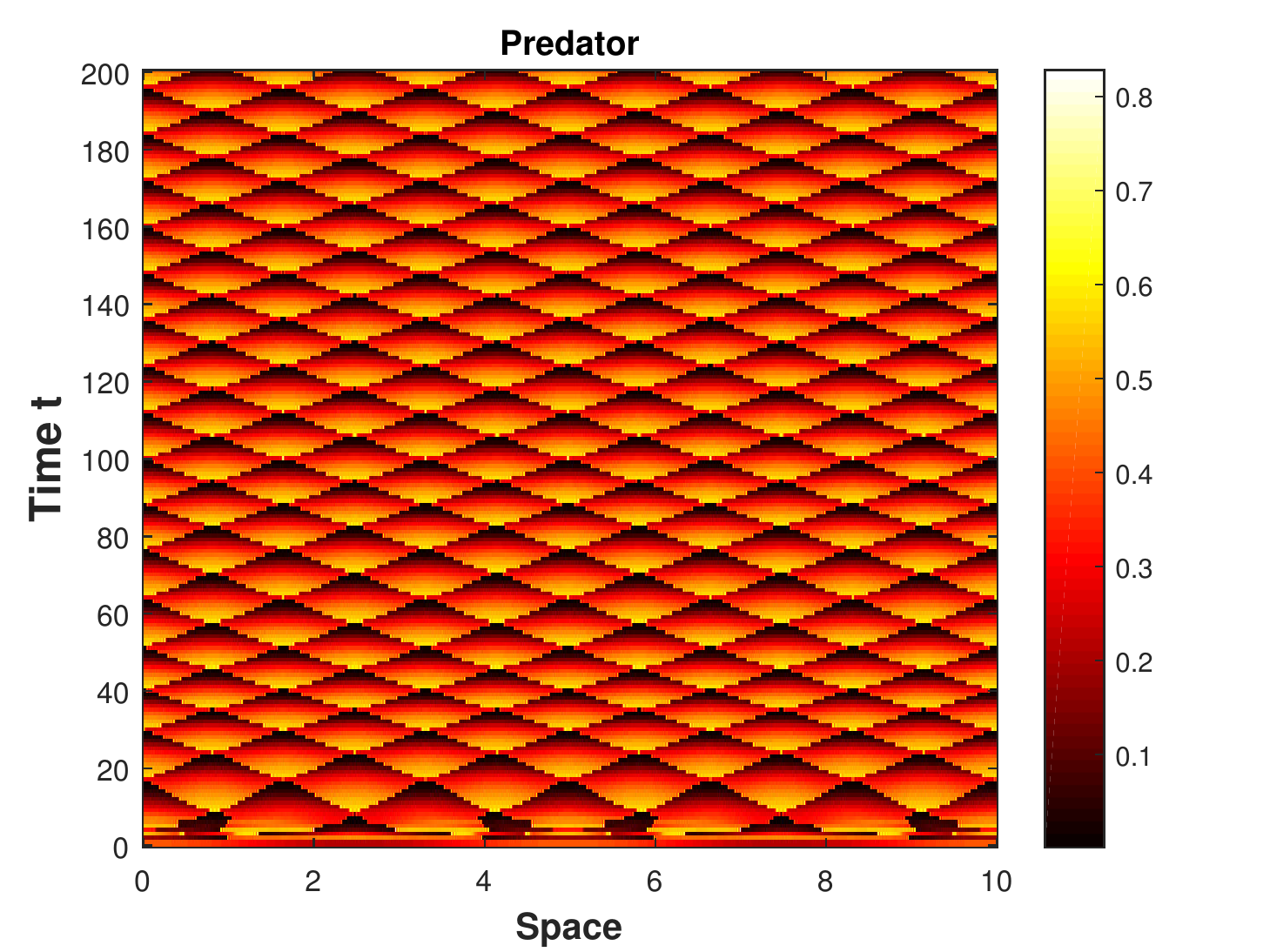}}\\
\caption{Model A: space-time patterns when prey-taxis sensitivity coefficient is stronger than that  of chemo-repulsion i.e  $\chi=2$ and (a) $\xi=5$ (b) $\xi=8$.}
\label{figa13}
\end{figure}
\begin{figure}[hbt!]  
\center
\subfloat[\label{bo1}]{ \includegraphics[width=0.35\textwidth]{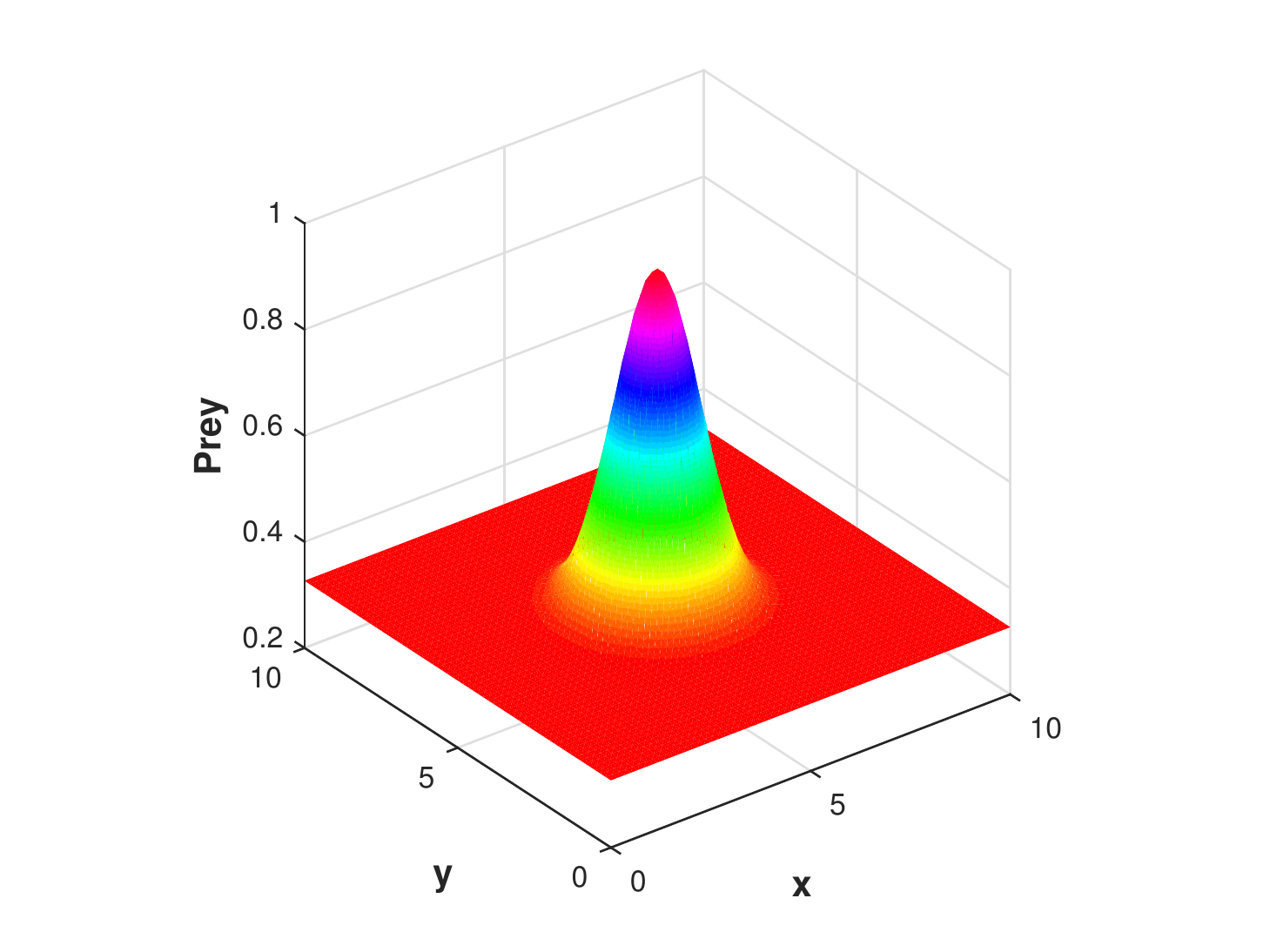} 
\includegraphics[width=0.35\textwidth]{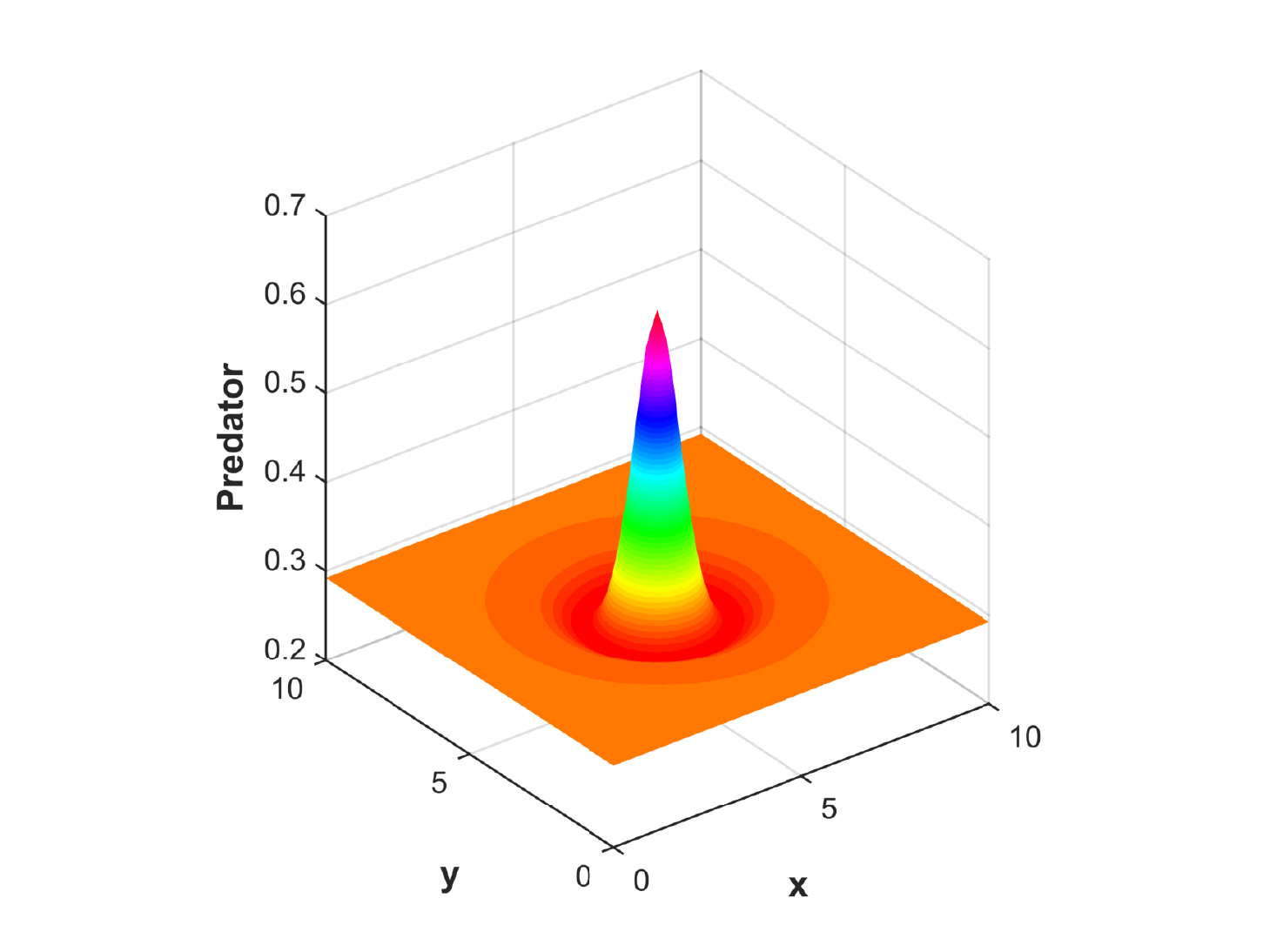} \includegraphics[width=0.35\textwidth]{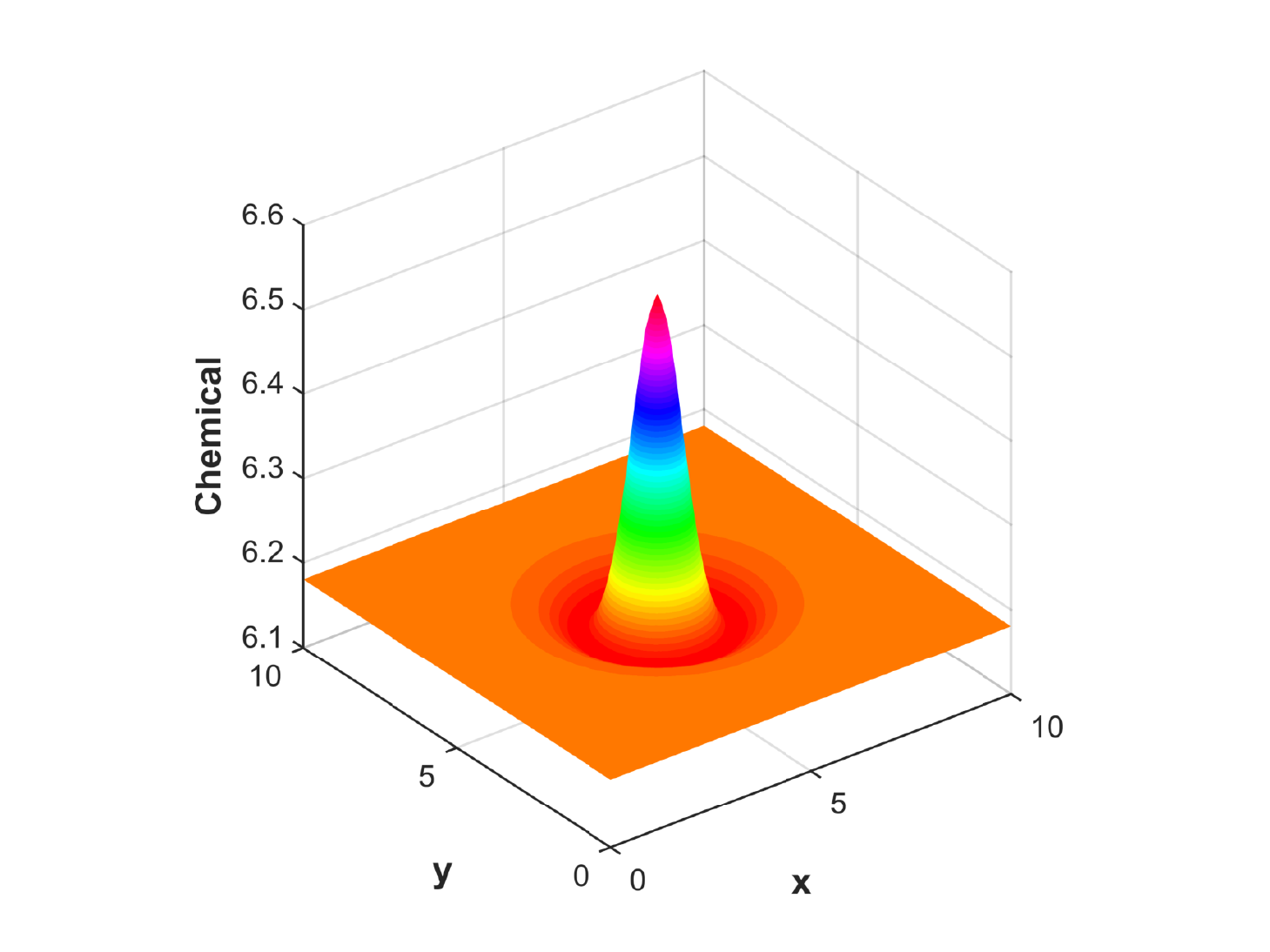}}\\
\subfloat[\label{bo3}]{	\includegraphics[width=0.35\textwidth]{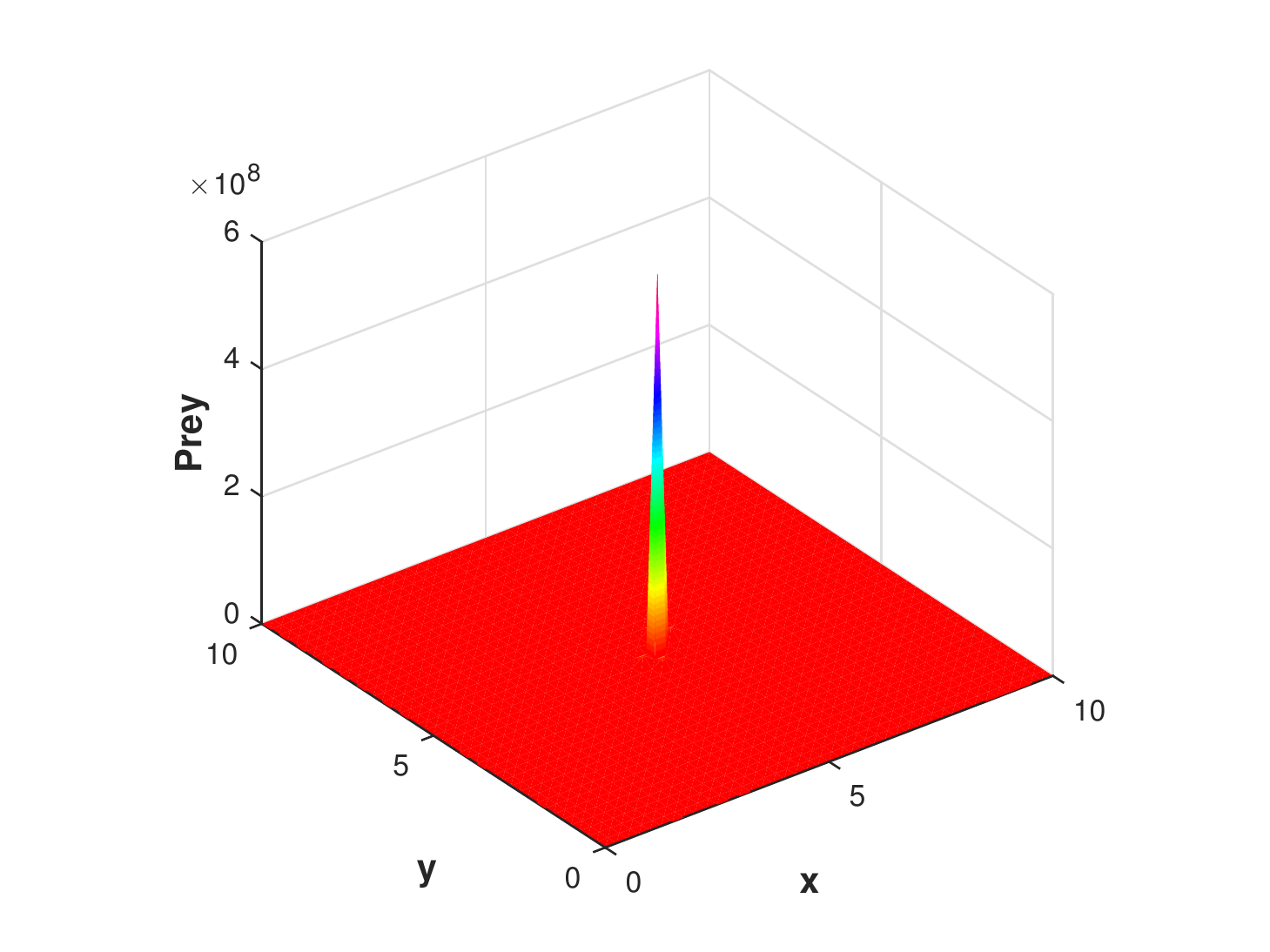} 
\includegraphics[width=0.35\textwidth]{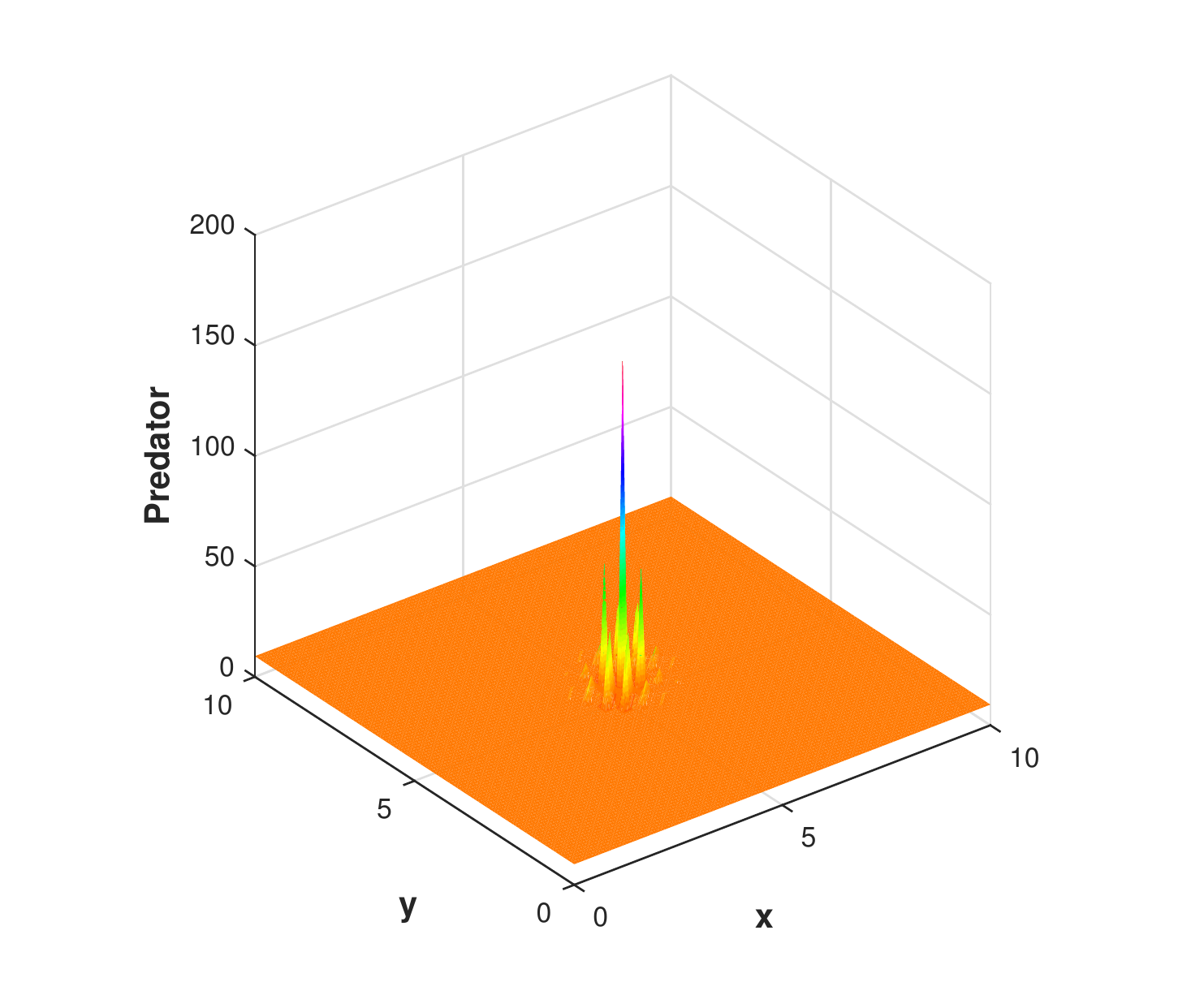}\includegraphics[width=0.35\textwidth]{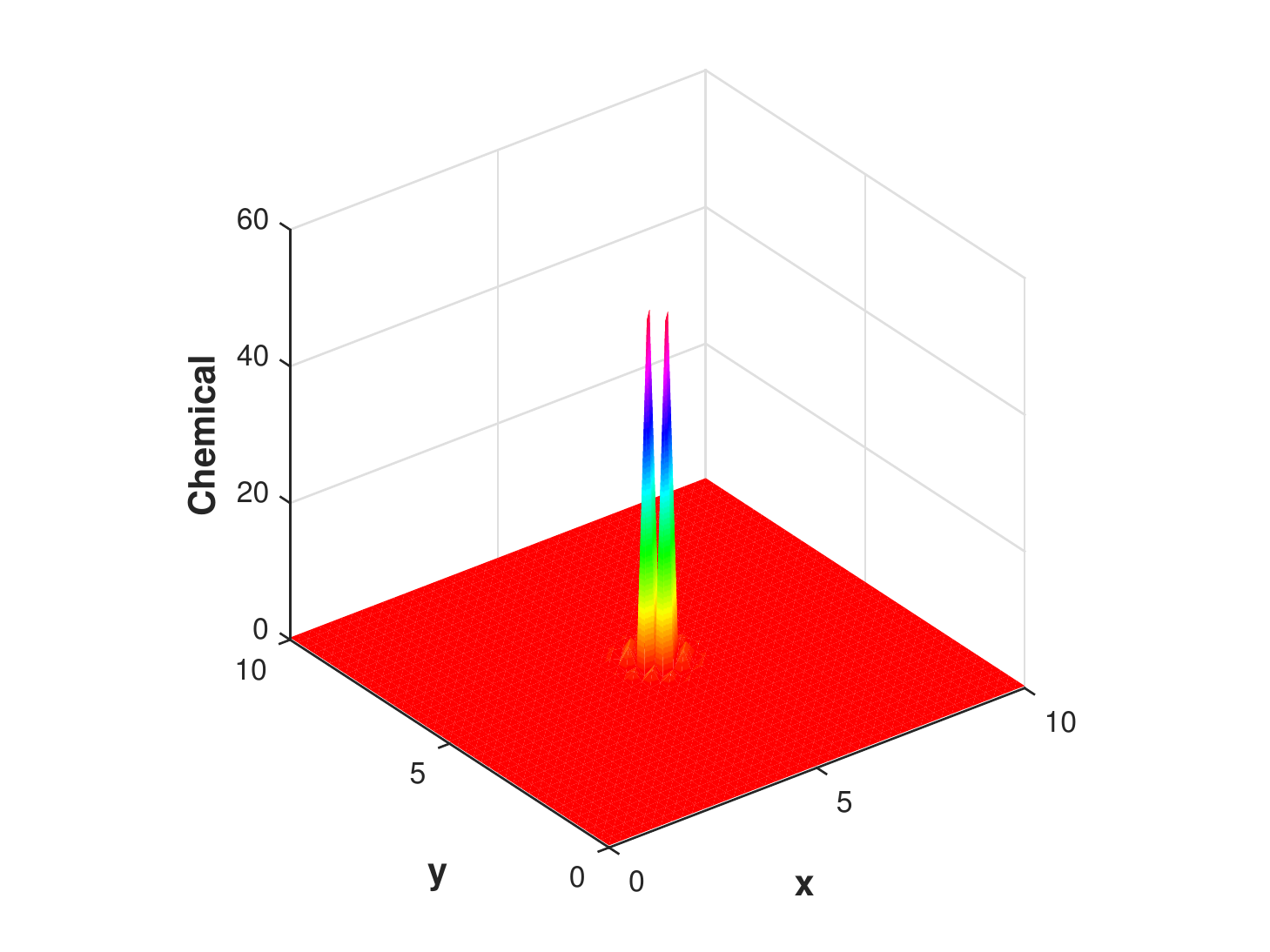}}	
\caption{Model A: numerical indication of finite-time blow-up for  $\chi=0.5$ and  $ \xi=10$  with remaining parameters the same as in (\ref{para1}) and Gaussian initial data for predator and prey centered in the middle of the square with constant initial data $\bar{W}$ for the chemical. Snapshots are presented at time steps (a) $t=10$ and (b) $t=134$.}
\label{blowup}
\end{figure}
\begin{figure*}[hbt!]  
\centering 
\subfloat[\label{chixi2}]{ \includegraphics[width=0.35\textwidth]{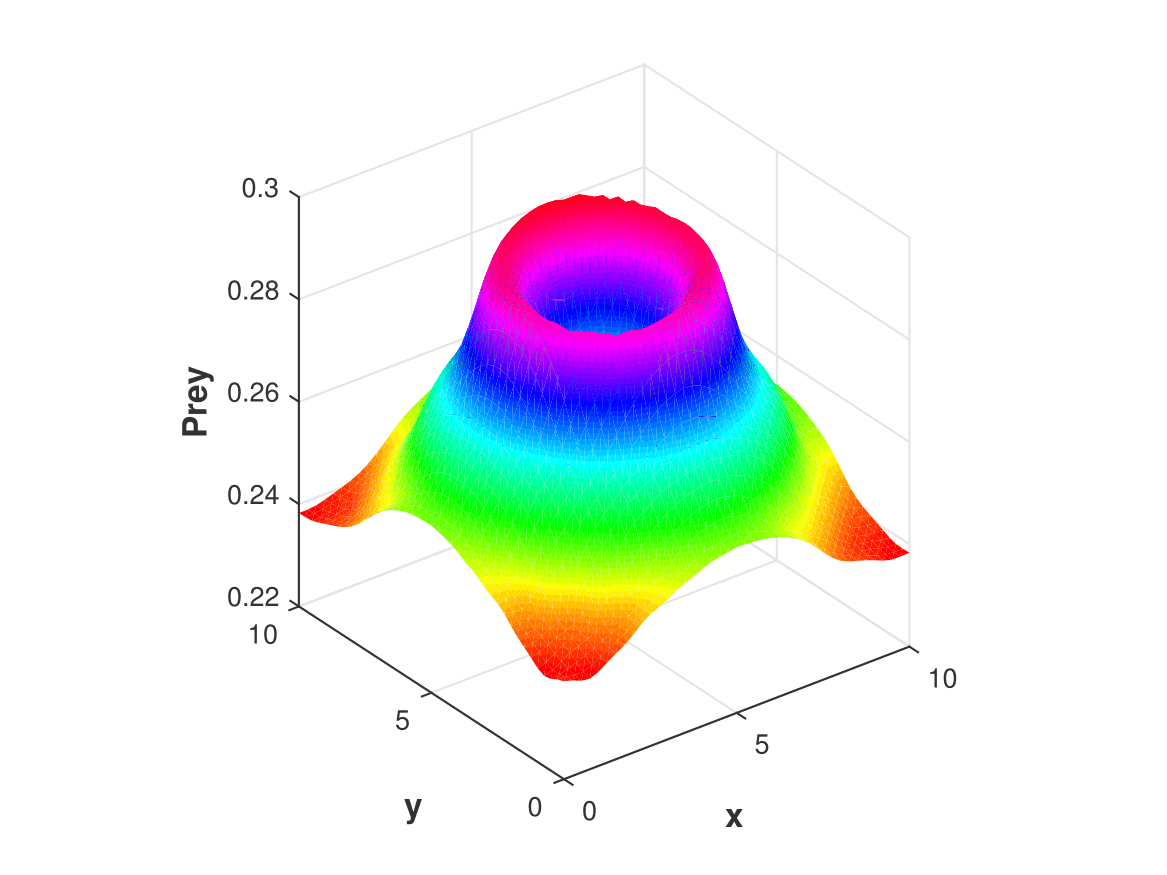} 
\includegraphics[width=0.35\textwidth]{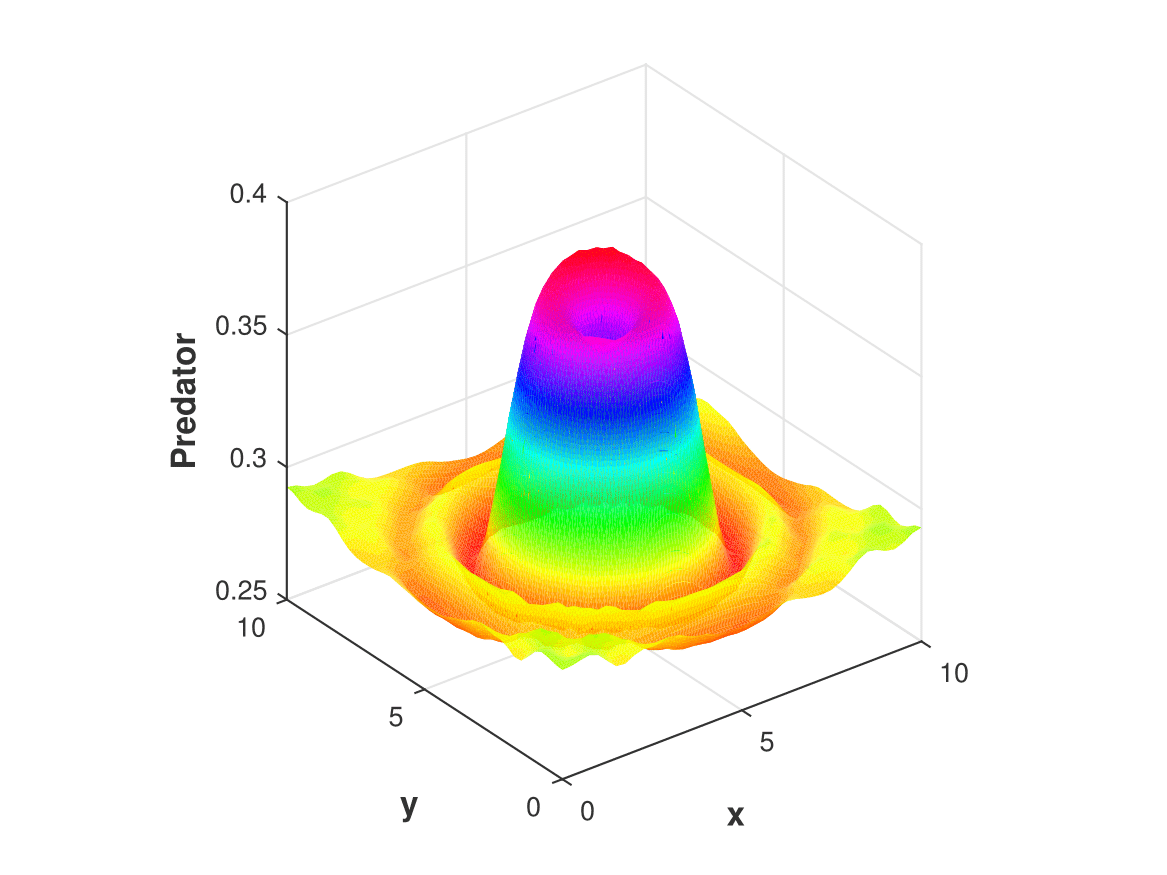}\includegraphics[width=0.35\textwidth]{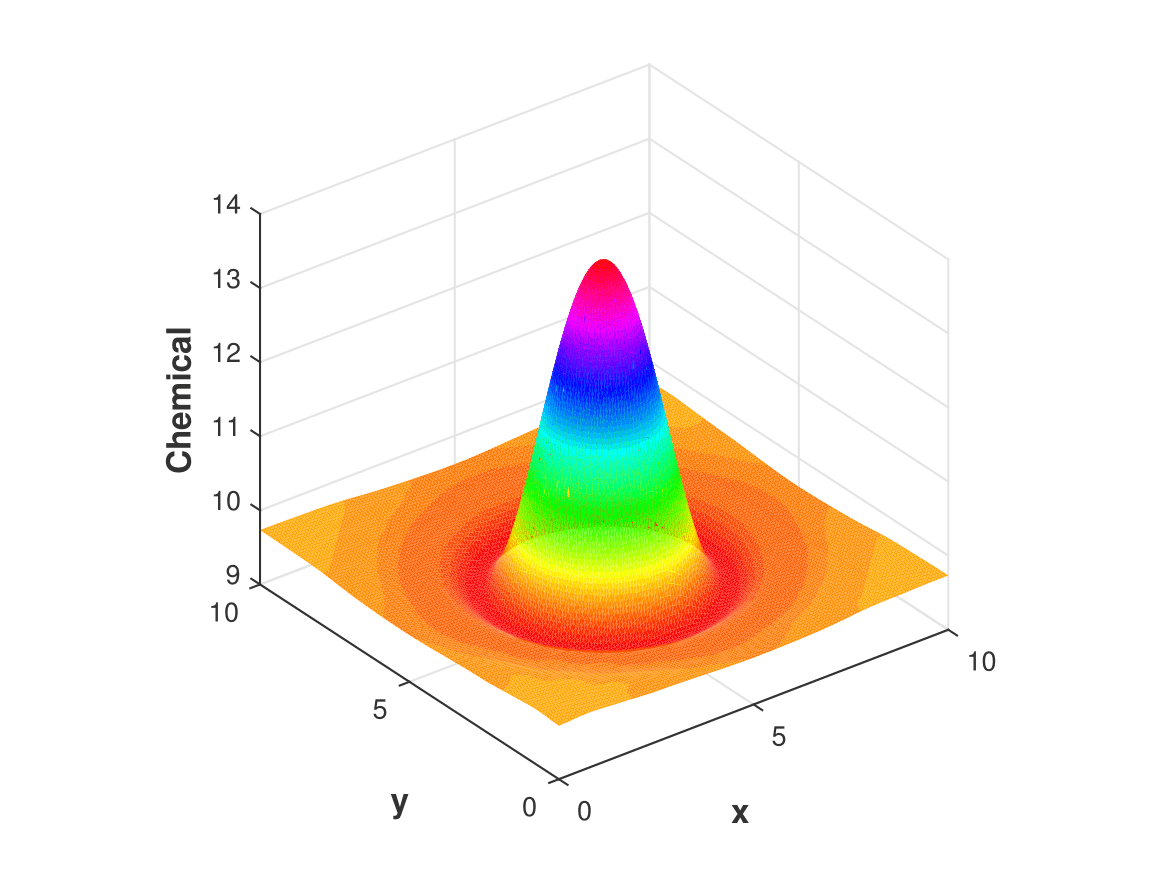}}
\caption{Model A: 2D seperation patterns  for $\chi=5.5$ and $\xi=5.5$ at time steps $t=70$ with with remaining parameters and initial data the same as in Fig. \ref{blowup}.}
\label{chixi}
\end{figure*}
\begin{figure*}[hbt!]  
\centering 
\subfloat[\label{chixi1}]{ \includegraphics[width=0.35\textwidth]{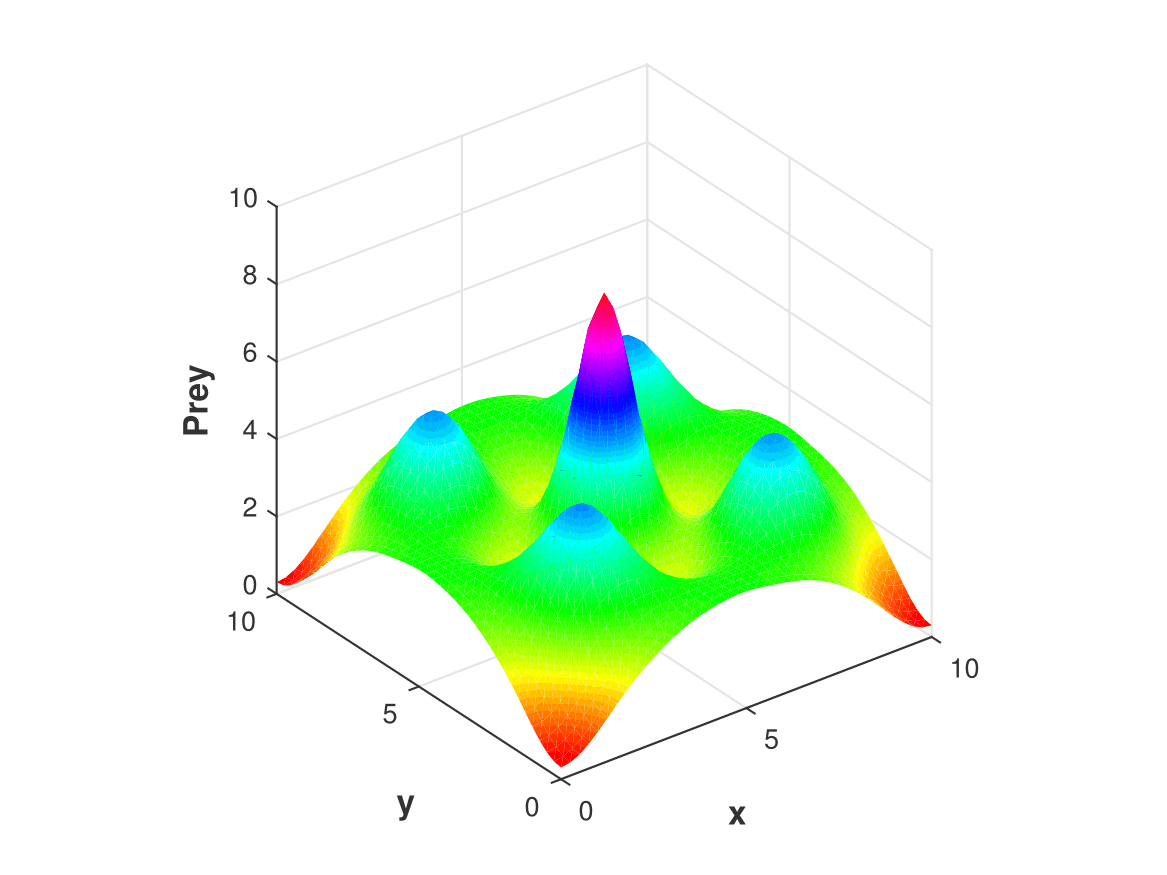} 
\includegraphics[width=0.35\textwidth]{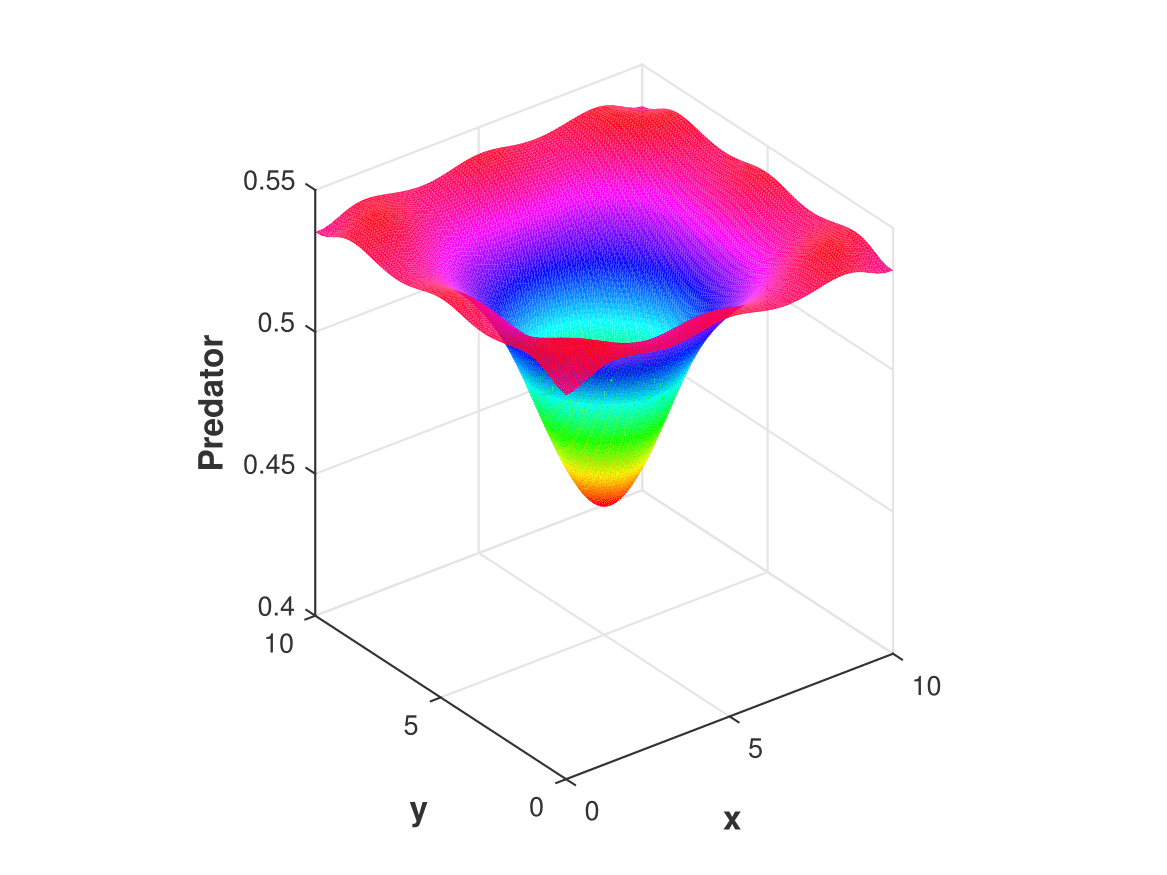}\includegraphics[width=0.35\textwidth]{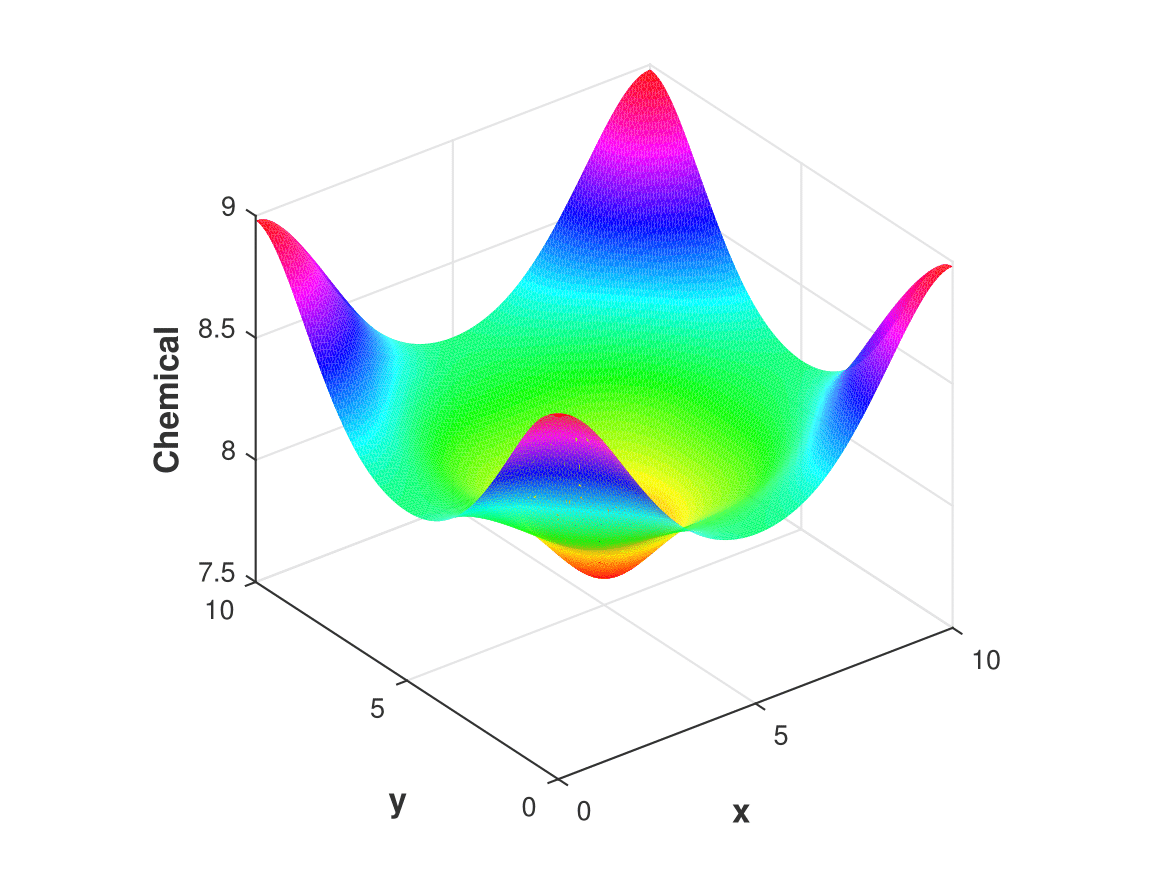}}
\caption{Model B: 2D seperation patterns  for $\chi=10$ ($\xi=0.0$) at time step $t=1500$ with remaining parameters and initial data the same as in Fig. \ref{blowup}.}
\label{chixi3}
\end{figure*}
\begin{figure}[hbt!]  
\centering 
\includegraphics[width=0.35\textwidth]{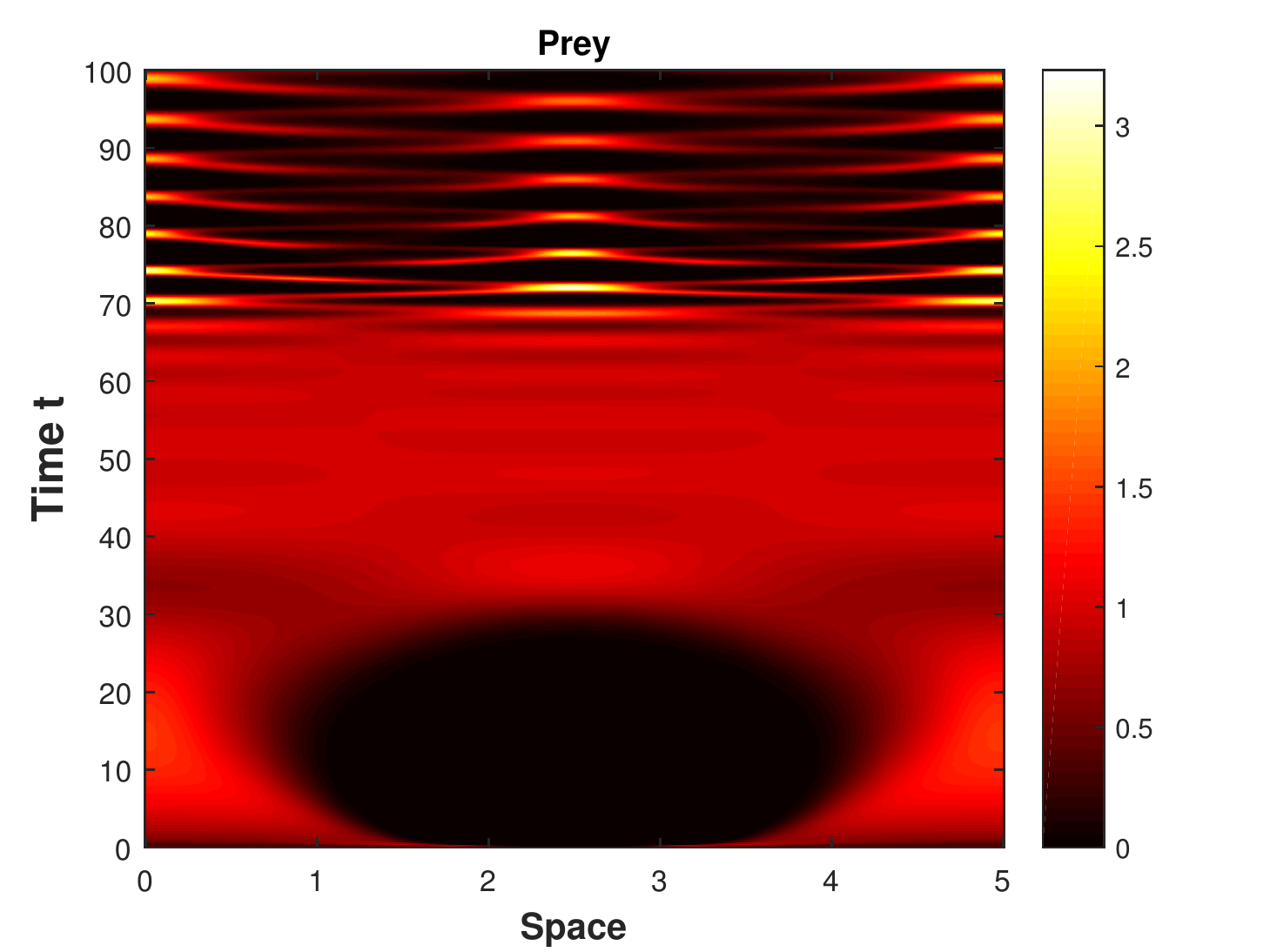} 
\includegraphics[width=0.35\textwidth]{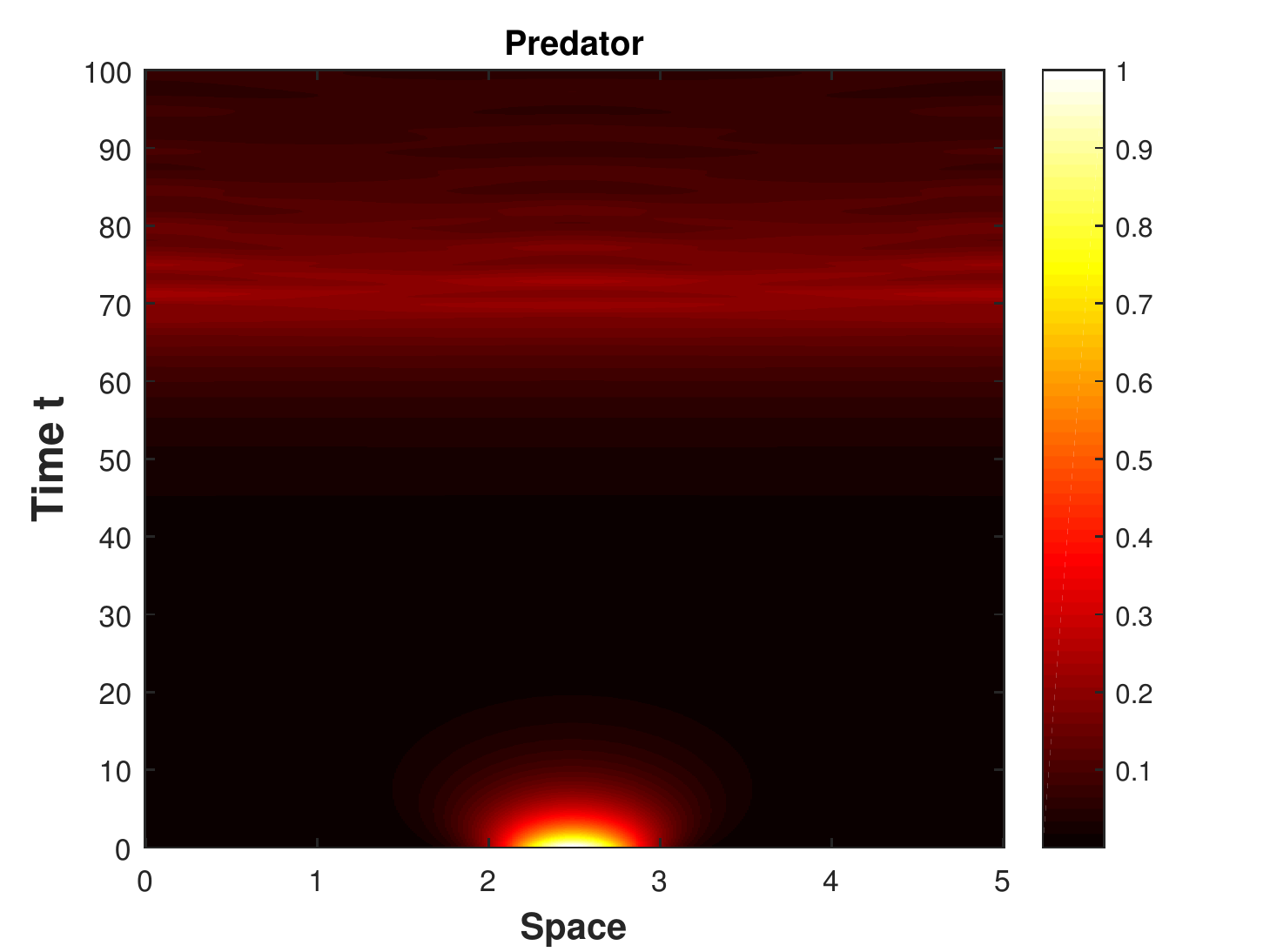}  	
\includegraphics[width=0.35\textwidth]{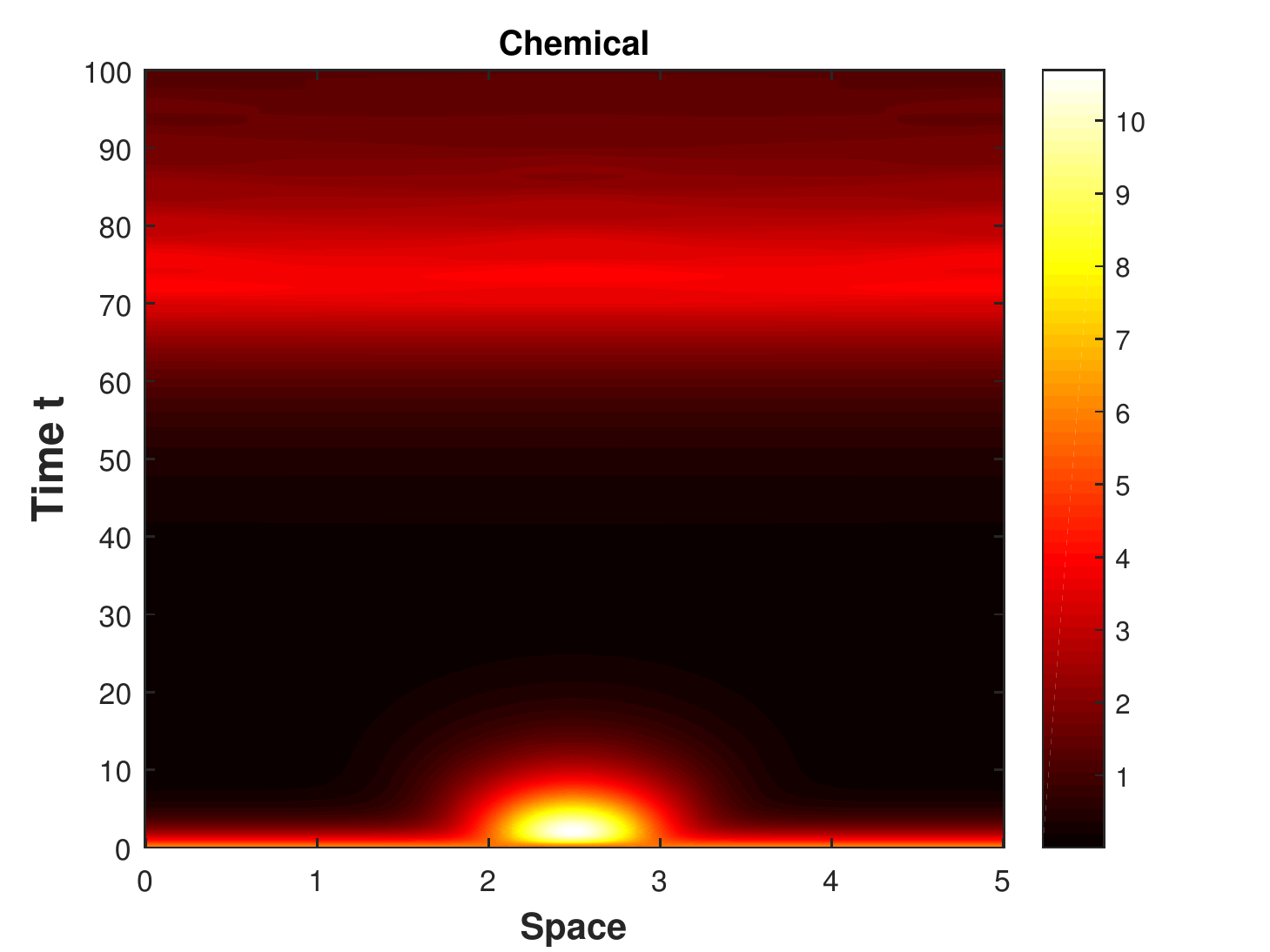} 
\caption{Model A: Spatio-temporal separation patterns emerged from initial data $N(0)=\bar{N},\ \bar{P}=\bar{P}+e^{-(x-2.5)^2},\ W(0)=\bar{W}$ and for the parameter set (\ref{para1}) with $\chi=8,\ \xi=0.27$.}
\label{atp}
\end{figure}

In Figure \ref{chemo} we  present 2D simulation  obtained for the square $(0\,,L)^2$ with $L=10$ with no-flux Neumann boundary conditions, initial data 
\[N(x,0)=\bar{N},\ P(x,0)=\bar{P}+e^{-((x-5)^2-(y-5)^2)},\ W(x,0)=\bar{W}\]
and  the set of parameters (\ref{para1}) with  $\chi>\chi^H$. We have observed complex almost periodic patterns illustrating essentially pursuit and evasion  of predators and prey in space which manifests itself by occurrence of spots of aggregation and  density depletion  varying  in time. It is worth noticing that all evolving patterns  keep the symmetry with respect to the middle of the square. Initially the prey and the chemical are homogeneously distributed in the 2D domain and predator initially dominates in the middle of the domain distributed according to the shifted Gaussian distribution. 

In Fig. \ref{chm1} at time step $t=10$ prey flees away from predator dominant area very quickly and makes a ring shape pattern. On the other hand, the predator and the chemical dominate  in the middle  of the domain. At time step $t=500$ we observe transition of predator central spot pattern into ring pattern which corresponds to the transition of prey  into the  ring pattern with relatively  larger radius (see Fig. \ref{chm2}). In Fig. \ref{chm3} we see  the  prey trying to escape form predator dominance area and forming aggregations  near the  centers of the square’s sides At the same time predator dominates inside rhombus alike structure. In Fig. \ref{chm4}, we observe the prey aggregations  in the corners of the square  while chemical and predator settle down in the centre of the domain. After some time we observe formation of prey aggregation in the centre of the domain while the  chemical leaves the centre of the domain . The numerical simulation suggests that Figs. \ref{chm3}-\ref{chm5} repeats in a fixed time period.

\subsection{Numerical simulation for Model A}
This subsection is devoted to studying model A with the Holling II  functional response (\ref{modelA}). The objective of this subsection is to examine numerically  the simultaneous effect of direct-prey taxis and chemo-repulsion on the  pattern formation. To this end, we first calculate the  critical  value $\xi^S$ (see (\ref{ksis}) numerically with the help of MATLAB. The coefficients of the characteristic polynomial of stability matrix (\ref{jacm2}) for the parameter set (\ref{para1}) are following
\begin{align*}
&\phi_j^1\approx0.55+1.02h_j>0,\ \phi_j^2\approx0.0201h_j^2+0.506h_j+0.1994+0.1776\xi h_j>0,\\
& \phi_j^3\approx0.0001h_j^3+0.005h_j^2+0.002h_j+0.0872+  \xi(0.1746\chi h_j+0.0850)h_j>0,\\
& \phi_j^1\phi_j^2-\phi_j^3\approx 0.0201h_j^3+0.506h_j^2+0.0546h_j+0.1994-\xi(\chi 0.9746h_j-0.0817)h_j.
\end{align*}
The stability threshold value $\xi^S$ from Theorem \ref{ThmstabA} is given by 
\begin{align*}
\xi^S\approx \min_{j\in \mathbb{N}_{+}}\Big\{ \frac{0.0201h_j^3+0.506h_j^2+0.0546h_j+0.1994}{(\chi 0.9746h_j-0.0817)h_j}\Big\}.
\end{align*}
For the set of parametrs (\ref{para1}) the minimum is attained at $h_j=h_1=\frac{\pi^2}{L^2}$. It is worth to mention that a positive  $\xi^S$ exists if and only if $\chi>\chi^S=\frac{0.0817}{0.974h_j}$ (see (\ref{RHA}))  which may give rise to Hopf type taxis-driven instability if chemo-repulsion taxis rate $\xi$ is big enough. One can easily see that $\xi^S$ depend upon the chemo-repulsive taxis rate $\chi$ and we numerically obtain that taxis-driven instability may emerge if  chemo-repulsive sensitivity coefficient  $\chi>\chi^S\approx 0.0267$ and $\xi$ is large enough (c.f. Theorem \ref{ThmstabA}).

First of all, we numerically show in accordance with Theorem \ref{ThmstabA}, that a small spatial perturbation of the  constant steady state $\bar{E}$ in model A (\ref{modelA}) does not affect the stability of the system for $\xi<\xi^S$. In Fig. \ref{fig9}, we observe that a spatial perturbation (\ref{idata}) at eigenmode $j=1$ in unit domain $L=1$ and $\chi=0.2$ converges to the constant steady state $\bar{E}$ if $\xi<\xi^S\approx  3.8144$. In the next figure \ref{fig99}, we keep all parameters and initial data the same to plot solution for a bigger $\xi$ i.e $\xi=10>\xi^S$. We observe then regular space-time pattern  and it  is worth  noticing  at this point that in the presence of chemotactic repulsion the prey-taxis is capable to destabilize  the coexistence steady state $\bar{E}$. This observation is worth underlining in the context of common opinion that the prey-taxis promotes the stability of coexistence steady state in predator-prey models  which didn't take into account the repulsive chemotaxis \cite{Bendamane,Lee,Tao}. One can see regular-space time pattern of small amplitude for prey in Fig. \ref{fig991}. However space-time patterns with larger amplitude are observed for predator and chemical (see Figs. \ref{fig992} \& \ref{fig992}) which are settled in boundary of domain. 

In simulation related to Fig. \ref{fig10} we intended to investigate the transition of pattern depending upon the strength of taxis. To this end, we run simulations for initial data (\ref{idata}) at eigenmode $j=1$ with $\chi=5$ and calculated the threshold $\xi^S=0.1464$. It is observed that if chemo-repulsion is stronger than prey-taxis rate then prey individuals flee to the corners and space-time separation pattern appears (see Fig. \ref{fig101}). However, predator also try to follow prey which gives rise to regular space-time pattern with small amplitude (see Fig. \ref{fig102}). The results obtained from Fig. \ref{fig99} \&  \ref{fig10} confer that large amplitude of space-time pattern depends upon the choice of taxis parameters. If chemo-repusion is higher than prey taxis rate then prey may have space-time separational patterns with lager amplitude and predator exhibits large amplitude space-time pattern if  prey taxis rate is higher than chemo-repulsion.

 
Next we show transient dynamics  for model A (\ref{modelA}) in the enlarged domain $L=10$. Presented numerical results show how prey-taxis  affects the pattern formation  in larger domain $L=10$ for fixed chemo-sensitivity coefficient ($\chi=2$) with corresponding  $\xi^S=0.082$ with the remaining parameters kept unchanged (\ref{para1}). Fig. \ref{fig131} corresponds to the  solution starting from initial data (\ref{idata}) with $j=4$ showing transition (at time $t=100$)  from a  regular space-time rhombous pattern to some other space inhomogeneous  structure with dominance  of prey at the ends of the domain interval. It is worth noticing that this transition is accompanied with the change in both period and amplitude of space-time fluctuations. As we increase prey-taxis sensitivity coefficient (see Fig. \ref{fig132}) it is observed that regular rhombus-alike structure resembling beehive appears immediately. This result reveals that prey-taxis is not only able to destabilize the predator-prey system but also has immense impact in the shaping of patterns. 

In 2D case  we observe more complex  behavior  of the solutions to model A (\ref{modelA}) in which  additionally  the prey taxis comes into play. We run 2D simulations in FreeFem++ package in order to investigate the simultaneous impact  of the chemo-repulsive taxis and the direct taxis on the behavior of solutions to model A.  Fig. \ref{blowup}   presents snapshots of surface plot observed at different time moments representing solutions starting from the initial data shaped as  shifted Gaussian distribution for prey and predator i.e $N_0=\bar{N}+e^{-((x-5)^2+(y-5)^2)},\ P_0=\bar{P}+e^{-((x-5)^2+(y-5)^2)}$ with  homogeneous distribution of the chemical $W_0=\bar{W}$. It has been observed that  prey and predator already at time step  $t=10$  exhibit  similar spiky structure (see Fig. \ref{bo1}) which is getting sharper and sharper over  time so that by obvious reasons any numerical approximation loses gradually its accuracy before reaching a sharp spike shape depicted  at time step $t=134$ (see Fig. \ref{bo3}). It is important to note that Fig.\ref{blowup} is presented for the situation when direct prey taxis is significantly  stronger than chemo-repulsive taxis (i.e. $\chi=0.5$ \& $\xi=10$) and all other parameters are the same as in (\ref{para1}). A possible interpretation of the singularity formation process is the following. At early stage of the process Fig.\ref{bo1} we may observe a rapid grow of the  density  function of the chemical    produced by the predator in the middle of the domain which  forms a steep spiky round hill of the chemical density surrounded   by  a valley. At the external valley slope  there is a gradient vector field directed outward the center. The opposite direction to this field is our chemorepulsion force forming a kind of barrier  which pushes the prey toward the center and stops  from  escaping the region limited by the round valley. At the same time the strong prey taxis directed toward the center of the domain results in both rapid shrinking and growth of the round spiky hill  and formation of high predator density in the middle of the domain. A closer look at this figure suggests that the prey try to escape from the predator dominant area but it is  less effective because prey-taxis is much stronger than  chemorepulsive taxis. 

Another scenario happens in Fig. \ref{chixi} when $\chi$ and $\xi$ are  equal each other. In this case due to relatively stronger chemorepulsion the prey is pushed out of the central region  with high chemical density and  then the predator density resembles a core surrounded by the density of prey which escapes outward the middle of the domain. This is a cumulative effect of both taxis mechanisms. It is interesting to see a dramatic difference between  the previous figure and Fig \ref{chixi3} when $\xi=0$  where we see a nice symmetric and periodic patterns which resemble those in Fig. \ref{chemo}. In particular in accordance with our theoretical results for larger time no singularity formation  takes place.

\section{Conclusions}
In this paper we  considered  two diffusive prey-predator models which take into account the reception of  chemical signals by prey which indicate  the location of predators. More precisely we investigated the avoidance of predator by prey upon  detection of chemical released by predator (e.g. predator odor) which stimulates migration outward  the gradient of the  chemical concentration-one of many possible antipredatory strategies observed in nature  \cite{Banks,Ferrari,Hay}. It is worth to notice that chemical signals with  various mechanisms of production may induce many other antipredatory adaptations in prey which demand further modeling efforts.  

The following remarks related to the results  obtained in this paper are worth underlining.
\begin{itemize}
\item  Classical diffusive  prey-predator  models enriched by terms accounting for  chemical signaling can describe the tendency to   spatio-temporal separation between prey and predators, by either avoiding areas inhabited by  predators or using those areas at different times than the predators. 
\item While trying to prove the existence of global  in time classical solutions to  model A which  contains two taxis terms we faced limitations in extending the proof to  higher space dimensions then $n=1$. Numerical solutions (see Fig. \ref{blowup})  indicate that  no classical solution is  expected in this case. It seems however, that a suitably defined weak solution to model A  exists for $n=2$. Interestingly, the formation of  blow-up solution in finite time is evidently related to the cumulative effect of both taxis mechanisms built-in to model A because each  of the two systems with a single taxis mechanism posses global classical solutions in space dimension $n=2$. The effect seems to be new and demands further studies. From the modeling view point it seems reasonable to consider a predator-prey model linking  the chemorepulsive evasion as response to an olfactory signal from predator  with negative  predator taxis corresponding to a visual detection of predators by prey. 
  
\item The most important feature stemming from the stability analysis of the coexistence steady state in model A and model B is the  destabilizing effect of the  repulsive chemotaxis which plays its role even in  the case when direct prey taxis is concerned. The latter is known to  stabilize  the coexistence steady state in  prey-predator models of reaction-diffusion type (at least when the Holling functional response is considered). Moreover, the stabilizing effect acts even when the chemosensitivity parameter $\chi$ exceeds the critical value $\chi^H$ from  model B provided the prey taxis effect measured in terms of the parameter $\xi$ is strong enough (c.f. Theorem \ref{ThmstabA}). 
\item Yet another consequence of the linear stability analysis is the type of bifurcation which may occur at the critical value of bifurcation parameter $\chi$. It turns out that any static bifurcation is precluded and only dynamic bifurcation of Hopf type may exist in the class of models studied in the present paper. 
\item Numerical simulations suggest  that evasive defense strategy of prey based on  chemical signaling may lead to the formation of complex space-time patterns of species  distribution. Solution patterns depicted in Section \ref{NumSim} for model A lead to interesting questions to be studied theoretically including abrupt in time change of patterns (see Fig \ref{figa13}) and blow-up solutions in 2D Fig. \ref{blowup}. Yet another effect worth further investigation is the transition of initial perturbation from one component of the system to another  as depicted in Figure 14 where initial perturbation only in predator population gives rise to strong regular pattern in  prey population with simultaneous  decay of fluctuation in the predator population. 
\end{itemize}

\noindent
{\bf Acknowledgments}

\noindent
Purnedu Mishra extends appreciation to ERCIM, the European Research Consortium for Informatics and Mathematics for
funding this work as well as  to the Faculty of Mathematics and Mechanics, University of Warsaw for hospitality during his post-doc stay. 
 
\end{document}